\documentclass[english,11pt]{smfart}

\setcounter{tocdepth}{2}

\usepackage{appendix}
\usepackage{etex}

\usepackage{amsbsy}
\usepackage{amsmath,amsfonts,amssymb,amsthm,mathrsfs,mathtools}

\usepackage{bm}

\usepackage[a4paper,vmargin={3cm,3cm},hmargin={3.5cm,3.5cm}]{geometry}
\linespread{1.2}

\usepackage[font=sf, labelfont={sf,bf}, margin=1cm]{caption}
\usepackage{graphicx}
\usepackage{epsfig}
\usepackage{latexsym}
\usepackage{xcolor}
\usepackage{ae,aecompl}
\usepackage{soul,framed}
\usepackage{comment}

\usepackage{xcolor}
\usepackage[pdfpagemode=UseNone,bookmarksopen=false,colorlinks=true,urlcolor=blue,citecolor=blue,citebordercolor=blue,linkcolor=blue]{hyperref}
\usepackage{smfhyperref}
\usepackage[capitalize]{cleveref}

\usepackage{pstricks}
\usepackage{enumerate}
\usepackage{tikz,animate,media9}						
\usepackage{todonotes}
\usepackage{pifont}
\usepackage{bm,marvosym}
\usepackage{algorithm}
\usepackage{algorithmic}

\usepackage{caption}
\usepackage{subcaption}

\usepackage{bbm}

\usepackage{tcolorbox}



\definecolor{aleacolor}{rgb}{0.16,0.59,0.78}


\hypersetup{
	breaklinks,
	colorlinks=true,
	linkcolor=aleacolor,
	urlcolor=aleacolor,
	citecolor=aleacolor}



\newcount\colveccount
\newcommand*\colvec[1]{
	\global\colveccount#1
	\begin{pmatrix}
		\colvecnext
	}
	\def\colvecnext#1{
		#1
		\global\advance\colveccount-1
		\ifnum\colveccount>0
		\\
		\expandafter\colvecnext
		\else
	\end{pmatrix}
	\fi
}

\newcommand{\ndN}{\mathbb{N}}

\newcommand{\ndR}{\mathbb{R}}
\newcommand{\ndC}{\mathbb{C}}

\renewcommand{\Pr}[1]{\mathbb{P}(#1)}

\newcommand{\Prb}[1]{\mathbb{P}\left(#1\right)}

\newcommand{\Ex}[1]{\mathbb{E}[#1]}





\newcommand{\convdis}{\,{\buildrel d \over \longrightarrow}\,}
\newcommand{\convd}{\,{\buildrel d \over \longrightarrow}\,}

\newcommand{\convp}{\,{\buildrel p \over \longrightarrow}\,}

\newcommand{\convas}{\,{\buildrel a.s. \over \longrightarrow}\,}



\newcommand{\He}{\mathrm{H}}
\newcommand{\he}{\mathrm{h}}

\newcommand{\Di}{\mathrm{D}}



\newcommand{\norm}[1]{\left\lVert#1\right\rVert}

\newcommand{\cA}{\mathcal{A}}
\newcommand{\cB}{\mathcal{B}}
\newcommand{\cC}{\mathcal{C}}
\newcommand{\cD}{\mathcal{D}}
\newcommand{\cE}{\mathcal{E}}
\newcommand{\cF}{\mathcal{F}}

\newcommand{\cH}{\mathcal{H}}
\newcommand{\cI}{\mathcal{I}}

\newcommand{\cL}{\mathcal{L}}
\newcommand{\cM}{\mathcal{M}}
\newcommand{\cN}{\mathcal{N}}

\newcommand{\cP}{\mathcal{P}}
\newcommand{\cQ}{\mathcal{Q}}

\newcommand{\cS}{\mathcal{S}}



\newcommand{\mC}{\mathsf{C}}
\newcommand{\mD}{\mathsf{D}}

\newcommand{\mM}{\mathsf{M}}

\newcommand{\mQ}{\mathsf{Q}}

\newcommand{\mS}{\mathsf{S}}
\newcommand{\mT}{\mathsf{T}}

\newcommand{\mX}{\mathsf{X}}



\newtheorem{theorem}{Theorem}[section]

\newtheorem{corollary}[theorem]{Corollary}
\newtheorem{proposition}[theorem]{Proposition}
\newtheorem{lemma}[theorem]{Lemma}

\newtheorem{definition}[theorem]{Definition}
\newtheorem{remark}[theorem]{Remark}

\interfootnotelinepenalty=10000

\numberwithin{equation}{section}


\title{\textbf{The scaling limit of random cubic planar graphs}}
\date{}

\author{Benedikt Stufler}

\address[Benedikt Stufler]{Vienna University of Technology}
\email{benedikt.stufler at tuwien.ac.at}

\begin{document}

\vspace {-0.5cm}

\begin{abstract}
	We study the random simple connected cubic planar graph $\mathsf{C}_n$ with an even number $n$ of vertices. We show that the Brownian map arises as Gromov--Hausdorff--Prokhorov scaling limit of $\mathsf{C}_n$  as $n \in 2 \ndN$ tends to infinity, after rescaling distances by $\gamma n^{-1/4} $ for a specific constant $\gamma>0$.
\end{abstract}


\maketitle

\section{Introduction}

Planar graphs are graphs that admit a crossing-free embedding into the $2$-sphere. We call such a graph cubic, if each vertex is adjacent to precisely three edges. Cubic planar graphs and related classes have been enumerated by \cite{zbMATH05122852, zbMATH07213288} via analytic and combinatorial methods. The average number of perfect matchings in a random cubic planar graph was determined in~\cite{zbMATH06639396}. The  work~\cite{stufler2022uniform} established a Uniform Infinite Cubic Planar Graph as their quenched local limit. The paper \cite{10.5565/PUBLMAT6612213} determined the typical number of triangles in $3$-connected cubic planar graphs. Related research directions concern $4$-regular planar graphs~\cite{zbMATH06827273}, cubic graphs on general orientable surfaces~\cite{zbMATH06841874}, and cubic planar maps~\cite{coreprep}. In particular,~\cite{zbMATH06556653} determined the geodesic two- and three-point functions of random cubic planar
maps, after assigning independent random lengths with an exponential distribution to each edge.

For  any even number $n \ge 4$ we let $\mC_n$ denote the uniform random simple connected cubic planar graph  $\mC_n$  with $n$ labelled vertices and hence $3n/2$ edges. The graph distance on $\mC_n$ is denoted by  $d_{\mC_n}$. We let $\mu_{\mC_n}$ denote the uniform measure on the vertex set of $\mC_n$. We let $(\mathbf{M}, d_{\mathbf{M}}, \mu_{\mathbf{M}})$ denote the Brownian map established independently in~\cite{MR3112934} and \cite{MR3070569}. See  Figure~\ref{fi:bm} for an illustration.  Its construction is briefly recalled in Section~\ref{eq:ghbm}. Our main result shows that the Brownian map describes the asymptotic global geometric shape of $\mC_n$.

\begin{theorem}
	\label{te:main}
	There exists a constant $\gamma>0$ such that
	\begin{align}
		\label{eq:mapconverge}
		\left(\mC_n, \gamma n^{-1/4} d_{\mC_n}, \mu_{\mC_n} \right) \convdis (\mathbf{M}, d_{\mathbf{M}}, \mu_{\mathbf{M}})
	\end{align}
	in the Gromov--Hausdorff--Prokhorov sense as $n \in 2\ndN$ tends to infinity.
\end{theorem}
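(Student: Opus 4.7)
The plan is to exploit the combinatorial decomposition of connected cubic planar graphs into their $3$-connected components and to transfer the scaling limit from a giant $3$-connected core, which via sphere duality is a uniform simple triangulation and whose Brownian-map limit is known from the work of Addario-Berry and Albenque.

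First, I would formalize the decomposition. Each connected cubic planar graph decomposes uniquely, via its cut vertices and $2$-edge cuts, into a tree of $3$-connected cubic planar components glued along edges; by the Whitney--Steinitz rigidity theorem each such component has a canonical embedding and is equivalently a $3$-connected cubic planar map (dually, a simple triangulation). Combined with the generating function analysis cited in the introduction, this places $\mC_n$ into the classical critical composition scheme of Gimenez--Noy. Second, I would establish the emergence of a giant core: there exist constants $\alpha,\kappa>0$ such that with probability tending to one, $\mC_n$ contains a unique $3$-connected component $\mT_n$ on $(\alpha+o_p(1))n$ vertices, obtained from a uniform $3$-connected cubic planar map of that size by substituting independent decorations along its edges, and these decorations are subcritical with exponential tails on both their size and diameter, so that all are of size $O(\log n)$ with high probability.

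Third, I would invoke the Brownian map scaling limit of uniform simple triangulations by Addario-Berry--Albenque, which after the identification with $3$-connected cubic planar maps applies to $\mT_n$ and yields Gromov--Hausdorff--Prokhorov convergence with a specific scaling constant $\gamma_3>0$. Fourth, I would compare the metrics: every shortest path in $\mC_n$ is, up to $O(\log n)$ corrections, a path in $\mT_n$ whose length equals $\kappa$ times the number of core edges it traverses, thanks to the exponential tails on decoration diameters. This yields the convergence of $(\mC_n, \gamma n^{-1/4} d_{\mC_n}, \mu_{\mC_n})$ to the Brownian map with a final constant $\gamma$ combining $\gamma_3$, $\alpha$, and $\kappa$; the measure convergence follows from a law of large numbers showing that a uniformly sampled vertex of $\mC_n$ lies within graph distance $O(\log n)$ of a uniform vertex of $\mT_n$.

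The main obstacle will be the fourth step: a uniform comparison of graph distances on $\mC_n$ with those on its $3$-connected core requires sharp metric control of the roughly $3n/2$ independent decorations simultaneously. This will mean sharpening the local limit results of~\cite{stufler2022uniform} into quantitative bounds on the typical length contribution of a single decoration, together with concentration and tail estimates controlling the rare large decorations uniformly over the core, so that they affect the rescaled distances only at order $o_p(1)$.
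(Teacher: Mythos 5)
Your outline correctly identifies the $3$-connected decomposition, Whitney rigidity, and the giant core as the backbone, and it rightly flags the metric comparison (your step four) as the main difficulty. However, the plan as stated contains two substantive errors that would make that step fail, and both concern the nature of the decorations.

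First, the decorations are not ``subcritical with exponential tails on size''. The composition scheme here is \emph{critical} (this is what makes the core macroscopic and produces the Airy fluctuations established in \cite{stufler2022uniform}): the number of vertices in a Boltzmann network component has a power-law tail of index $-5/2$. Consequently the largest decoration attached to the core has $\Theta_p(n^{2/3})$ vertices, not $O(\log n)$, and the largest decoration diameter is only $o_p(n^{1/6+\epsilon})$. The quantity that \emph{does} have exponential tail is the link weight $\iota$ (the distance between the two poles of a decoration after deleting its root edge), and proving this exponential tail requires a nontrivial generating-function argument (Lemma~\ref{le:linkw}), not a generic subcriticality fact. Your plan built around $O(\log n)$ decoration sizes would therefore not get off the ground.

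Second, and more fundamentally, the statement that ``every shortest path in $\mC_n$ is, up to $O(\log n)$ corrections, a path in $\mT_n$ whose length equals $\kappa$ times the number of core edges it traverses'' is not something one can obtain from tail bounds on the decorations alone. Because each edge of the core is replaced by a component with an i.i.d.-like random pole-to-pole distance, the induced metric on the core is a first-passage percolation metric with random light-tailed link weights. Relating such an fpp metric uniformly to a constant multiple of the graph metric, with error $o_p(n^{1/4})$ over all $\Theta(n^2)$ pairs of vertices, requires the deep fpp concentration technology for random maps (Curien--Le~Gall \cite{zbMATH07144469}, adapted to simple triangulations in \cite{simplefpp}); it cannot be obtained from decoration-level concentration, because a geodesic may pick out the unusually short links and one must rule out macroscopic advantage from that. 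This is the central ingredient of Lemma~\ref{le:hauptlemma} and it is entirely absent from your plan. Finally, the decorations are not actually independent once you condition on the total vertex count; the paper passes to an i.i.d.\ Boltzmann model by a contiguity argument (Lemma~\ref{le:main3}), which you would need as well, and which is delicate precisely because some of the required events do not hold with high probability.
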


\begin{figure}[t]
	\centering
	\begin{minipage}{\textwidth}
		\centering
		\includegraphics[width=1.0\linewidth]{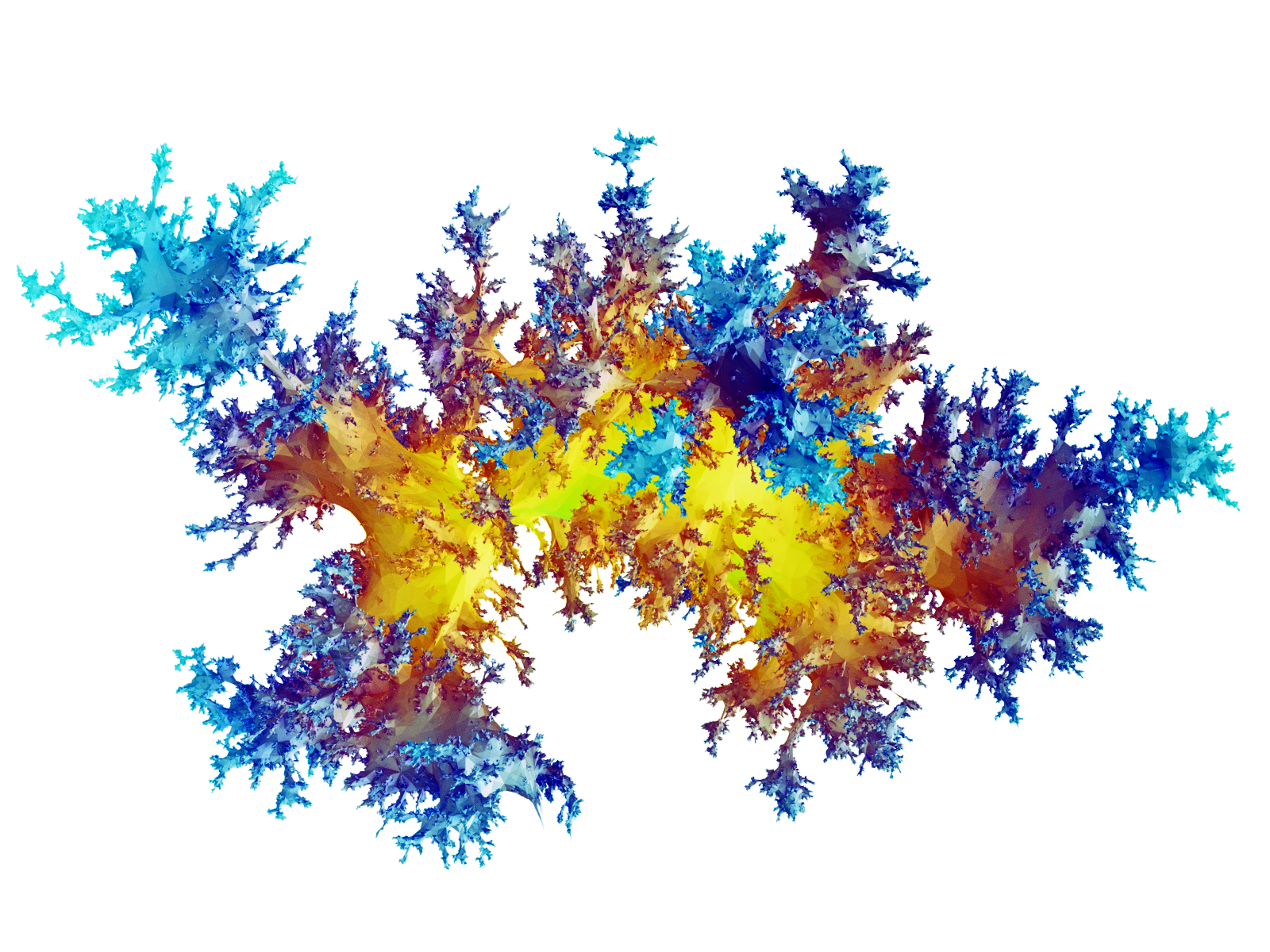}
		\caption[asdf]{The Brownian map, approximated by a random simple triangulation  of the two-dimensional sphere with a million triangles.\protect\footnotemark~In the proof of Theorem~\ref{te:main} we show that random cubic planar graphs may be approximated by random simple triangulations in the Gromov--Hausdorff--Prokhorov metric.}
		\label{fi:bm}
	\end{minipage}
\end{figure}

\footnotetext{The colours in the drawing represent closeness centrality in the dual map. The coordinates of the vertices have been selected according to a spring-electrical embedding algorithm. The triangulation was generated using the author's open source software \texttt{simtria}, freely available on the github: \url{https://github.com/BenediktStufler/simtria}. }

The Brownian map is known to be universal in the sense that it arises as scaling limit of random elements of seemingly independent classes of planar maps. Specifically, quadrangulations~\cite{MR3070569}, triangulations and $2q$-angulations for $q \ge 2$~\cite{MR3112934},  bipartite planar maps~\cite{abraham2016}, unrestricted planar maps~\cite{MR3256874}, simple triangulations and quadrangulations~\cite{MR3706731}, $(2q+1)$-angulations for $q \ge 2$~\cite{AHL_2021__4__653_0},  quadrangulations and their cores~\cite{MR3729639},  specific critical Boltzmann planar maps~\cite{Mar18b}, and biconditioned planar maps~\cite{KM21}. 

It appears that less is known about the geometry of graph classes as opposed to maps. The diameter of random graphs from subcritical classes was studied in~\cite{MR3184197}. These models have a more tree-like shape as shown in the work~\cite{zbMATH06653517}, which establishes  Aldous' Brownian continuum random tree~\cite{MR1085326} as their scaling limit. Graph classes that are critical in a specific sense~\cite{MR3068033} are expected to exhibit different asymptotic shapes. The strongest indication being large deviation bounds for the diameter of unrestricted planar graphs~\cite{MR2735332}. Theorem~\ref{te:main} provides a first scaling limit of a critical graph class. An independent proof of this result is given in forthcoming work~\cite{emt2022} via a different approach.

An important bridge between the study of graphs and maps is the scaling limit of simple triangulations~\cite{MR3706731}, since these maps are $3$-connected and Whitney's theorem ensures that each $3$-connected planar graph has precisely two planar  embeddings. That is, simple planar triangular maps and graphs are equivalent models.
The proof of Theorem~\ref{te:main} builds upon this result. This is possible since the uniform cubic planar graph $\mC_n$ has a giant $3$-connected component~\cite[Thm. 1.2]{stufler2022uniform} with roughly $\kappa n$ vertices for $\kappa \approx 0.85085$. This way, $\mC_n$  may be viewed as the result of blowing up the edges of the $3$-connected core by networks, so that the total number of vertices in the core and in the network components equals~$n$.  These network components are hence not independent, as knowledge of a linear number of network components places a bias on the size of the core. However,  we are going to argue that Theorem~\ref{te:main} actually follows from a scaling limit of a modified model where the network components of the $3$-connected core are replaced by independent copies of a specific Boltzmann network. We show that the maximal diameter of the attached components is negligible compared to the diameter of the entire graph. Hence the modified model has the geometry of a first-passage percolation metric on the $3$-connected core, with random positive integer-valued light-tailed link weights. It is known that $3$-connected cubic planar graphs are precisely the dual graphs of simple triangulations. Thus, using the scaling limit of simple triangulations~\cite{MR3706731} and suitable adaptions~\cite{simplefpp} of concentration inequalities for first-passage percolation on the dual of random  triangulations~\cite{zbMATH07144469} yield a scaling limit for the modified model and hence complete the proof of Theorem~\ref{te:main}.

\subsection*{Notation}

All random variables occurring in this work are assumed to be defined on a common probability space whose measure we denote by $\mathbb{P}$. All unspecified limits are as $n \to \infty$, possibly along a subset of the positive integers.


We say an event that depends on $n$ holds \emph{with high probability} if its probability tends to $1$ as $n$ becomes large. 	An event holds with \emph{very high probability}, if the probability for its complement may be bounded by $C \exp(-c n^\delta)$ for some constants $C,c,\delta>0$ that do not depend on $n$.

Convergence in probability and distribution are denoted by $\convp$ and $\convd$, almost sure convergence is denoted by $\convas$. 

For any sequence $a_n>0$ we let $o_p(a_n)$ denote an unspecified random variable $Z_n$ such that $Z_n / a_n \convp 0$. Likewise $O_p(a_n)$ is a random variable $Z_n$ such that $Z_n / a_n$ is stochastically bounded. 

Given a metric space $(M, d_M)$, the distance of a point $x \in M$ from a subset $A \subset M$ is the infimum of distances between $x$ and points from $A$. We use the notation
\[
	d_M(x,A) = \inf_{y \in A} d_M(x,y).
\]
The degree of a vertex $v$ in a graph $G$ is denoted by $d_G(v)$. If $G$ is connected, $d_G(u,v)$ denotes the graph distance between vertices $u,v$ of $G$.

\section{Preliminaries}
\label{eq:ghbm}

\subsection{Gromov--Hausdorff--Prokhorov distance and related metrics}


The \emph{Hausdorff-distance} is a metric on the collection of compact subsets of a metric space $(X,d_X)$. It is given by
\[
d_{\textsc{H}}(A,B) = \inf\{\epsilon > 0 \mid A \subset U^X_{\epsilon}(B), B \subset U^X_{\epsilon}(A)\} 
\]
with \[
U^X_\epsilon(A) = \{x \in X \mid d_X(x,A)< \epsilon\}\]
 denoting the \emph{$\epsilon$-hull} of a subset $A$. That is, the set of all points in $X$ with distance less than $\epsilon$ from $A$.

The \emph{Prokhorov-distance} between  probability measures $\mu$ and $\nu$ on the Borel $\sigma$-algebra $\mathfrak{B}(X)$ on $X$ is given by
\[
d_{\textsc{P}}(\mu, \nu) = \inf\{ \epsilon >0 \mid \forall A \in \mathfrak{B}(X): \mu(A) \le \nu(U^X_\epsilon(A)) + \epsilon \}.
\]
The Prokhorov-distance satisfies the axioms of a metric, and in particular \[
d_{\textsc{P}}(\mu, \nu) = d_{\textsc{P}}(\nu, \mu).
\] See~\cite[Thm. 11.3.1]{dudley_2002} for a detailed justification.

Let  $(X, d_X, \mu_X)$, $(Y, d_Y, \mu_Y)$ be compact metric spaces endowed with Borel probability measures. 

The \emph{Gromov--Hausdorff} (GH) distance between  $(X, d_X)$ and $(Y, d_Y)$ is given by the minimal Hausdorff distance of isometric embeddings of $X$ and $Y$ into any common space. That is,
\[
d_{\textsc{GH}}(X,Y) = \inf_{\phi_X, \phi_Y} d_{\textsc{H}}(\phi_X(X),\phi_Y(Y)),
\]
with the indices $\phi_X, \phi_Y$ ranging over all isometric embeddings $\phi_X: X \to Z$ and $\phi_Y: Y \to Z$ for all choices of metric spaces $(Z, d_Z)$. Letting $\mu_X \phi_X^{-1}$ and $\mu_Y \phi_Y^{-1}$ denote the push-forward measures, the \emph{Gromov--Hausdorff--Prokhorov} (GHP) distance is defined similarly by
\begin{multline*}
d_{\textsc{GHP}}((X, d_X,  \mu_X),(Y, d_Y, \mu_Y)) \\= \inf_{\phi_X, \phi_Y} \max(d_{\textsc{H}}(\phi_X(X),\phi_Y(Y)), d_{\textsc{P}}(\mu_X \phi_X^{-1},\mu_Y \phi_Y^{-1})).
\end{multline*}
We say $(X, d_X, \mu_X)$ and $(Y, d_Y, \mu_Y)$ are \emph{isometry-equivalent}, if there exists a bijective isometry $\phi: X \to Y$ such that $\mu_Y = \mu_X \phi^{-1}$. The GHP distance induces a metric on the set $\mathfrak{K}$ of (representatives of) equivalence classes of compact metric spaces endowed with Borel probability measures.\footnote{The equivalence classes are obviously not sets, and hence an infinite collection of such classes is not a set. For this technical reason, we need to work with a collection of representatives instead.} See for instance~\cite[p. 762]{zbMATH05306371} and~\cite{MR2571957}. The metric space $(\mathfrak{K}, d_{\mathrm{GHP}})$ is complete and separable, see~\cite[Sec. 2]{zbMATH05043271}, ~\cite[Sec. 6]{MR2571957} and~\cite[Ch. 7]{MR1835418}. See also~\cite{MR3035742} for the related local GHP-distance, and~\cite{janson2020gromovprohorov} for properties of the related Gromov--Prokhorov distance.


Our main result, Theorem~\ref{te:main}, states a distributional limit of random elements of the space $(\mathfrak{K}, d_{\mathrm{GHP}})$. In its proof, we will use a specific characterization of the GHP-distance:

A relation $R \subset X \times Y$ between $X$ and $Y$ is a \emph{correspondence}, if every $x \in X$ corresponds to at least one $y \in Y$ and vice versa. We let $R(X,Y)$ denote the collection of correspondences between $X$ and $Y$.
The \emph{distortion} of a correspondence is defined by
\[
\mathrm{dis}(R) = \sup\{ | d_X(x,x') - d_Y(y,y') | \mid (x,y), (x',y') \in R \}.
\]
 A \emph{coupling} between $\mu_X$ and $\mu_Y$ is a Borel measure $\nu$ on $X \times Y$ with marginal distributions $\mu_X$ and $\mu_Y$. We let $M(\mu_X, \mu_Y)$ denote the collection of couplings between $\mu_X$ and $\mu_Y$. This allows for the following description of the GHP-distance:
\begin{proposition}[{\cite[Prop. 6]{MR2571957}}]
	\label{pro:ghpcoupling}
	Let  $(X, d_X, \mu_X)$, $(Y, d_Y, \mu_Y)$ be compact metric spaces endowed with Borel probability measures. The Gromov--Hausdorff--Prokhorov distance between these spaces is equal to the infimum of all $\epsilon>0$ for which there is a correspondence $R \in R(X,Y)$ satisfying
	\[
		\frac{1}{2} \mathrm{dis}(R) \le \epsilon
	\]
	and coupling $\nu \in M(X,Y)$ with
	\[
	\nu(R) \ge 1- \epsilon.
	\]
\end{proposition}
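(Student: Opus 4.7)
The plan is to prove the two inequalities separately, writing $I$ for the infimum on the right-hand side. For the bound $d_{\textsc{GHP}} \le I$, given an $\epsilon$ admitting a correspondence $R$ with $\frac{1}{2}\mathrm{dis}(R) \le \epsilon$ and a coupling $\nu \in M(\mu_X, \mu_Y)$ with $\nu(R) \ge 1 - \epsilon$, I would explicitly realise $X$ and $Y$ inside a common metric space and verify both the Hausdorff and the Prokhorov bounds in that realisation. The reverse inequality $I \le d_{\textsc{GHP}}$ goes the other way: from a pair of isometric embeddings into a common space I would extract a correspondence and a coupling of the required quality.

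For the first direction I endow the disjoint union $Z = X \sqcup Y$ with the metric agreeing with $d_X$ on $X$, with $d_Y$ on $Y$, and setting, for $x \in X$ and $y \in Y$,
\[
d_Z(x,y) = \inf\bigl\{d_X(x,x') + \epsilon + d_Y(y',y) : (x',y') \in R\bigr\}.
\]
The triangle inequality between two points of $X$ via a point of $Y$ is the only nontrivial case and follows from $d_X(x_1,x_2) \le d_X(x_1,x_1') + d_X(x_1',x_2') + d_X(x_2',x_2)$ combined with the distortion bound $d_X(x_1',x_2') \le d_Y(y_1',y_2') + 2\epsilon$ and the triangle inequality in $Y$. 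After possibly quotienting out zero-distance pairs, the canonical inclusions $\phi_X, \phi_Y$ become isometric embeddings; since every $x \in X$ is paired with some $y \in Y$ through $R$, one has $d_Z(\phi_X(x), \phi_Y(y)) \le \epsilon$, giving the Hausdorff bound. For Prokhorov, given a Borel $A \subseteq Z$, I decompose
\[
\mu_X\phi_X^{-1}(A) = \nu(\phi_X^{-1}(A) \times Y) \le \nu\bigl((\phi_X^{-1}(A) \times Y) \cap R\bigr) + \nu(R^c),
\]
bound the second term by $\epsilon$, and note that on the intersection with $R$ the $Y$-coordinate sits within distance $\epsilon$ of $A$ inside $Z$, so projecting to $Y$ deposits this mass into the $\epsilon$-hull of $A$. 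A standard $\eta \downarrow 0$ inflation handles the open/closed distinction in the definition of $U^Z_\epsilon$.

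For the second direction, suppose isometric embeddings $\phi_X, \phi_Y$ into some $(Z,d_Z)$ realise Hausdorff and Prokhorov distances at most $\epsilon$. For any $\epsilon' > \epsilon$ I set
\[
R_{\epsilon'} = \bigl\{(x,y) \in X \times Y : d_Z(\phi_X(x), \phi_Y(y)) < \epsilon'\bigr\}.
\]
The Hausdorff bound forces $R_{\epsilon'}$ to be a correspondence, and the triangle inequality in $Z$ immediately gives $\mathrm{dis}(R_{\epsilon'}) \le 2\epsilon'$. For the coupling I invoke Strassen's theorem: applied to the pushforwards $\mu_X\phi_X^{-1}, \mu_Y\phi_Y^{-1}$ on the separable subspace generated by $\phi_X(X) \cup \phi_Y(Y)$, it converts the Prokhorov bound into a coupling $\tilde\nu$ with $\tilde\nu(\{(z,z') : d_Z(z,z') < \epsilon'\}) \ge 1 - \epsilon'$. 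Since the marginals are supported on the compact images, $\tilde\nu$ is carried by $\phi_X(X) \times \phi_Y(Y)$ and pulls back along the Borel isomorphism $(\phi_X,\phi_Y)$ to a coupling $\nu \in M(\mu_X,\mu_Y)$ with $\nu(R_{\epsilon'}) \ge 1 - \epsilon'$. Letting $\epsilon' \downarrow \epsilon$ concludes the argument.

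The main nontrivial ingredient is the appeal to Strassen's duality theorem in the second direction; it presupposes a separable ambient space, but this is harmless because one may replace $Z$ by the closure of $\phi_X(X) \cup \phi_Y(Y)$. Everything else reduces to the triangle-inequality verification for $d_Z$ in the gluing construction, which uses precisely the distortion bound $\mathrm{dis}(R) \le 2\epsilon$, together with the decomposition of $\nu$ along $R$ and its complement and some bookkeeping with $\eta \downarrow 0$ limits to accommodate the strict inequality in the definition of the hull.
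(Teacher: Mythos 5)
The paper does not actually prove Proposition~\ref{pro:ghpcoupling}; it cites it directly from \cite[Prop.~6]{MR2571957}, so there is no in-paper proof to compare against. Taken on its own, your argument is correct and is in fact the standard one for this characterization: in the ``easy'' direction you glue $X$ and $Y$ along $R$ by defining a cross-distance $d_Z(x,y) = \inf\{d_X(x,x') + \epsilon + d_Y(y',y) : (x',y')\in R\}$, verify that the distortion bound $\tfrac12\mathrm{dis}(R)\le\epsilon$ is exactly what makes the triangle inequality hold across the seam, read off the Hausdorff bound from the correspondence property, and get the Prokhorov bound by splitting $\nu$ along $R$ versus $R^c$; in the converse direction you extract $R_{\epsilon'}$ as the set of pairs of nearby images and invoke Strassen's duality theorem to turn the Prokhorov bound into a coupling concentrated on $R_{\epsilon'}$. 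The small technical points you flag are real but routine and you handle them appropriately: replacing $\epsilon$ by $\epsilon+\eta$ in the gluing to match the strict inequality in the definition of the $\epsilon$-hull, quotienting zero-distance pairs (which does not affect isometry onto the images since $d_Z$ restricts to $d_X$ and $d_Y$ on the two pieces), and restricting to the closure of $\phi_X(X)\cup\phi_Y(Y)$ so that Strassen applies in a separable setting. One minor stylistic point: after Strassen gives a coupling $\tilde\nu$ of the pushforwards, pulling it back along $(\phi_X,\phi_Y)$ requires these maps to be measurable isomorphisms onto their images, which holds here because $X,Y$ are compact and the $\phi$'s are isometric, so it is worth saying so explicitly. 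Overall the proof is sound and is essentially the proof given in the cited reference.
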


\subsection{The mass and radius of balls in GHP-convergent sequences}

We recall general properties of GHP-convergent sequences following closely the presentation in~\cite{stufler2022mass}.

\subsubsection{Balls with small diameter have small mass}

For any $\epsilon>0$ and any compact measured metric space $(X, d_X, \mu_X) \in \mathfrak{K}$ we define
\begin{align}
	\label{eq:deflambda}
	\lambda_\epsilon^X := \sup_{x \in X} \mu_X(B_\epsilon^X(x)).
\end{align}
Here 
\begin{align}
	\label{eq:openball}
	B_\epsilon^X(x) = \{y \in X \mid d_X(x,y) < \epsilon\}
\end{align} denotes the open $\epsilon$-ball in $X$. 

If $(X,d_X, \mu_X)$ is the Gromov--Hausdorff--Prokhorov limit of a sequence $(X_n, d_{X_n}, \mu_{X_n})_{n \ge 1}$ in $\mathfrak{K}$, then the following statements are equivalent:
\begin{enumerate}[\qquad 1)]
	\item There is no point $x \in X$ with $\mu_X(\{x\}) >0$.
	\item $\lim_{\epsilon \downarrow 0} \lambda_\epsilon^X=0$.
	\item $\lim_{\epsilon \downarrow 0} \limsup_{n \to \infty} \lambda_\epsilon^{X_n}=0$
\end{enumerate}
See~\cite[Cor. 2.3]{stufler2022mass} for a detailed justification. A version for random elements reads as follows:
\begin{lemma}[{\cite[Cor. 2.4]{stufler2022mass}}]
	\label{le:conconlambda}
	Let $(\mX,d_{\mX}, \mu_{\mX})$ and $(\mX_n, d_{\mX_n}, \mu_{\mX_n})_{n \ge 1}$ be random elements of $\mathfrak{K}$ satisfying
	\[
	(\mX_n, d_{\mX_n}, \mu_{\mX_n}) \convdis (\mX,d_{\mX}, \mu_{\mX}).
	\]
	Then the following statements are equivalent:
	\begin{enumerate}
		\item Almost surely there is no $x \in \mX$ with $\mu_\mX(\{x\})>0$.
		\item For all $\epsilon, \epsilon'>0$ there exist $\delta>0$ and $N>0$ such that for all $n \ge N$: $\Pr{\lambda_\delta^{\mX_n} > \epsilon} < \epsilon'$.
	\end{enumerate}
\end{lemma}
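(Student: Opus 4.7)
The plan is to reduce the random statement to the deterministic equivalence recalled just above the lemma, via Skorokhod's representation theorem. Since $(\mathfrak{K}, d_{\mathrm{GHP}})$ is Polish, along any subsequence one may realize the distributional convergence $(\mX_n, d_{\mX_n}, \mu_{\mX_n}) \convd (\mX, d_{\mX}, \mu_{\mX})$ as an almost sure convergence on a common probability space.

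For the direction (1)$\Rightarrow$(2), argue by contradiction. Negating (2) yields constants $\epsilon, \epsilon' > 0$, a sequence $\delta_k \downarrow 0$, and indices $n_k \to \infty$ with
\[
\Prb{\lambda_{\delta_k}^{\mX_{n_k}} > \epsilon} \ge \epsilon'.
\]
Pass to a Skorokhod coupling along $(n_k)$ so that $\mX_{n_k} \to \mX$ almost surely in GHP. Under (1) the limit is a.s.\ atom-free, so the deterministic equivalence gives $\lim_{\delta \downarrow 0} \limsup_k \lambda_\delta^{\mX_{n_k}} = 0$ almost surely. Because $\delta \mapsto \lambda_\delta^X$ is monotone nondecreasing, this forces $\lambda_{\delta_k}^{\mX_{n_k}} \to 0$ almost surely. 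Bounded convergence (using $\lambda_\delta^X \le 1$) then contradicts the displayed lower bound.

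For the direction (2)$\Rightarrow$(1), suppose $\mu_{\mX}$ carries an atom with positive probability. Then there exist $k \in \ndN$ and $\alpha > 0$ with
\[
\Prb{\exists x \in \mX \colon \mu_{\mX}(\{x\}) \ge 1/k} \ge \alpha.
\]
The key deterministic input is the lower-semicontinuity statement: if $X_n \to X$ in GHP and $\mu_X(\{x_*\}) \ge p$, then $\liminf_n \lambda_\delta^{X_n} \ge p$ for every $\delta>0$. This is obtained from Proposition~\ref{pro:ghpcoupling} by choosing correspondences $R_n$ with $\mathrm{dis}(R_n) \le 2\epsilon_n$ and couplings $\nu_n$ with $\nu_n(R_n) \ge 1 - \epsilon_n$ where $\epsilon_n \to 0$, picking $x_n \in X_n$ with $(x_n, x_*) \in R_n$, and observing that $R_n \cap (X_n \times B_{\delta - 2\epsilon_n}^X(x_*)) \subset B_\delta^{X_n}(x_n) \times X$, so that
\[
\mu_{X_n}\bigl(B_\delta^{X_n}(x_n)\bigr) \;\ge\; \mu_X\bigl(B_{\delta - 2\epsilon_n}^X(x_*)\bigr) - \epsilon_n \;\ge\; p - \epsilon_n.
\]
Passing to a Skorokhod coupling and applying this pointwise on the atom event, Fatou's lemma yields $\liminf_n \Prb{\lambda_\delta^{\mX_n} > 1/(2k)} \ge \alpha$ for every fixed $\delta > 0$, contradicting (2) with $\epsilon = 1/(2k)$ and $\epsilon' = \alpha/2$.

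The main technical point is the lower-semicontinuity assertion used in the second implication: one must track how the mass of an atom of $\mu_X$ is transferred into small balls of $\mu_{X_n}$ through the correspondence--coupling pair, exploiting simultaneously the small distortion of $R_n$ and the large $\nu_n$-mass concentrated on $R_n$. Once this is in place, Skorokhod's theorem combined with the deterministic equivalence packages the proof in both directions.
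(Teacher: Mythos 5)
The paper gives no proof of this lemma; it cites it verbatim from the survey \cite[Cor.~2.4]{stufler2022mass}, so there is no in-paper argument against which to compare. Your Skorokhod-based reduction to the deterministic equivalence recalled just above the lemma is, as far as I can tell, correct and complete. In the direction $(1)\Rightarrow(2)$, the negation of $(2)$ does yield sequences $\delta_k\downarrow 0$ and $n_k\to\infty$ with $\Pr{\lambda_{\delta_k}^{\mX_{n_k}}>\epsilon}\ge\epsilon'$; after Skorokhod-coupling along $(n_k)$, atom-freeness of the limit combined with the deterministic equivalence $1)\Leftrightarrow 3)$ and the monotonicity of $\delta\mapsto\lambda_\delta^X$ force $\lambda_{\delta_k}^{\mX_{n_k}}\to 0$ a.s., and bounded convergence contradicts the lower bound. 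In the direction $(2)\Rightarrow(1)$, your lower-semicontinuity estimate is the right deterministic input and the derivation from Proposition~\ref{pro:ghpcoupling} is correct: the inclusion
\[
R_n\cap\bigl(X_n\times B^X_{\delta-2\epsilon_n}(x_*)\bigr)\subset B^{X_n}_\delta(x_n)\times X
\]
is exactly the distortion inequality, and the estimate $\mu_{X_n}(B^{X_n}_\delta(x_n))\ge \mu_X(B^X_{\delta-2\epsilon_n}(x_*))-\epsilon_n\ge p-\epsilon_n$ follows from $\nu_n(R_n)\ge 1-\epsilon_n$ (valid once $\delta>2\epsilon_n$, hence for $n$ large). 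The subsequent Fatou argument then yields the contradiction. It is worth noting that you could not have extracted this fixed-$\delta$ lower bound from the deterministic equivalence alone, which only controls the behaviour as $\delta\downarrow 0$; proving the quantitative lower-semicontinuity directly via correspondences and couplings is indeed the cleanest way to run the Fatou step.
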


Hence, if the limit is diffuse, for large $n$ each open ball in $\mX_n$ with small diameter is likely to have small mass.

\subsubsection{Balls with small mass have small radius}
For any $\epsilon>0$ and any compact measured metric space $(X, d_X, \mu_X) \in \mathfrak{K}$ we define
\begin{align}
	\label{eq:def}
	\rho_\epsilon^X := \inf_{x \in X} \mu_X(C_\epsilon^X(x)).
\end{align}
Here 
\begin{align}
	\label{eq:mbal}
	C_\epsilon^X(x) = \{y \in X \mid d_X(x,y) \le \epsilon\}
\end{align} denotes the closed $\epsilon$-ball in $X$. 

A Borel probability measure has \emph{full support}, if any open non-empty set has positive measure.  If $(X,d_X, \mu_X)$ is the Gromov--Hausdorff--Prokhorov limit of a sequence $(X_n, d_{X_n}, \mu_{X_n})_{n \ge 1}$ in $\mathfrak{K}$, then the following statements are equivalent:
\begin{enumerate}[\qquad 1)]
	\item $\mu_X$ has full support.
	\item $\rho_\epsilon^X>0$ for all $\epsilon>0$.
	\item $\liminf_{n \to \infty} \rho_\epsilon^{X_n} >0$ for all $\epsilon>0$. 
\end{enumerate}
A detailed justification is provided in the survey \cite[Cor. 3.3]{stufler2022mass}. See also ~\cite[Lem. 15]{MR2571957}, which deduces statement 1) from the a priori weaker assumption that \[
\limsup_{n \to \infty} \inf_{x \in X_n} \mu_{X_n}(C_\epsilon^{X_n}(x))>0
\] for some $\epsilon_n$-dense subsets $X_n \subset X$ with $\epsilon_n \to 0$. A version for random elements reads as follows:
\begin{lemma}[{\cite[Cor. 3.4]{stufler2022mass}}]
	\label{co:fullrandom}
	Let $(\mX,d_{\mX}, \mu_{\mX})$ and $(\mX_n, d_{\mX_n}, \mu_{\mX_n})_{n \ge 1}$ be random elements of $\mathfrak{K}$ satisfying
	\[
	(\mX_n, d_{\mX_n}, \mu_{\mX_n}) \convdis (\mX,d_{\mX}, \mu_{\mX})
	\]
	as $n \to \infty$. Then the following statements are equivalent:
	\begin{enumerate}
		\item $\mu_\mX$ has almost surely full support.
		\item For all $\epsilon, \epsilon'>0$ there are $\delta, N>0$ such that for all $n \ge N$ 
		\[\Pr{\rho_\epsilon^{\mX_n} < \delta} < \epsilon'.\]
	\end{enumerate}
\end{lemma}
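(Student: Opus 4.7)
The plan is to reduce to the deterministic equivalence already stated above by passing, via the Skorokhod representation theorem (applicable since $(\mathfrak{K}, d_{\mathrm{GHP}})$ is Polish), to a probability space on which $(\mX_n, d_{\mX_n}, \mu_{\mX_n}) \convas (\mX, d_{\mX}, \mu_{\mX})$. Since both statements depend only on the marginal laws, this reduction is harmless. The central pointwise input, extracted from the proof of the deterministic equivalence via Proposition~\ref{pro:ghpcoupling}, is the sandwich
\begin{equation*}
\rho_{\epsilon/2}^{\mX} \leq \liminf_{n \to \infty} \rho_{\epsilon}^{\mX_n} \leq \limsup_{n \to \infty} \rho_{\epsilon}^{\mX_n} \leq \rho_{2\epsilon}^{\mX},
\end{equation*}
valid for each $\epsilon > 0$ on the almost-sure convergence event. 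Both outer inequalities follow by comparing the $\mu_{\mX_n}$-mass of a ball $C_\epsilon^{\mX_n}(y_n)$ with the $\mu_{\mX}$-mass of a suitably enlarged or shrunken ball around a point $x \in \mX$ corresponding to $y_n$ under a correspondence $R_n$ of vanishing distortion and a coupling $\nu_n$ with $\nu_n(R_n) \to 1$; the excess $\nu_n(R_n^c)$ supplies the vanishing error term.

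For the direction (1)$\Rightarrow$(2), fix $\epsilon, \epsilon' > 0$. On the almost-sure event where $\mX_n \to \mX$ and $\mu_{\mX}$ has full support, the deterministic equivalence yields $\liminf_n \rho_\epsilon^{\mX_n} > 0$. The nested events
\[
F_{N, k} = \{\rho_\epsilon^{\mX_n} \geq 1/k \text{ for all } n \geq N\}, \qquad N, k \in \ndN,
\]
cover this event, so continuity of probability provides $N_0, k_0$ with $\Pr{F_{N_0, k_0}} > 1 - \epsilon'$. Setting $N := N_0$ and $\delta := 1/k_0$, for any $n \geq N$ the inclusion $\{\rho_\epsilon^{\mX_n} < \delta\} \subseteq F_{N_0, k_0}^c$ yields $\Pr{\rho_\epsilon^{\mX_n} < \delta} < \epsilon'$.

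For (2)$\Rightarrow$(1), I would argue by contradiction. Since $\{\mu_{\mX} \text{ lacks full support}\} = \bigcup_{k \geq 1} \{\rho_{1/k}^{\mX} = 0\}$, failure of (1) produces $k$ with $p := \Pr{\rho_{1/k}^{\mX} = 0} > 0$. Applying the upper half of the sandwich with $\epsilon = 1/(2k)$ shows that $\rho_{1/(2k)}^{\mX_n} \to 0$ on $\{\rho_{1/k}^{\mX} = 0\}$, hence for every $\delta > 0$
\[
\{\rho_{1/k}^{\mX} = 0\} \subseteq \liminf_{n \to \infty} \{\rho_{1/(2k)}^{\mX_n} < \delta\},
\]
and Fatou's lemma yields
\[
p \leq \liminf_{n \to \infty} \Pr{\rho_{1/(2k)}^{\mX_n} < \delta}.
\]
Condition (2), with $\epsilon = 1/(2k)$ and $\epsilon' = p/2$, furnishes $\delta$ and $N$ with $\Pr{\rho_{1/(2k)}^{\mX_n} < \delta} < p/2$ for $n \geq N$; combining the two bounds gives $p \leq p/2$, the desired contradiction.

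The main obstacle is the upper sandwich inequality $\limsup_n \rho_{\epsilon}^{\mX_n} \leq \rho_{2\epsilon}^{\mX}$: given $\beta > \rho_{2\epsilon}^{\mX}$, one picks $x_0 \in \mX$ with $\mu_{\mX}(C_{2\epsilon}^{\mX}(x_0)) < \beta$, selects $y_n \in \mX_n$ with $(x_0, y_n) \in R_n$, and uses the coupling to show $(\mX \times C_\epsilon^{\mX_n}(y_n)) \cap R_n \subseteq C_{2\epsilon}^{\mX}(x_0) \times \mX_n$, whence $\mu_{\mX_n}(C_\epsilon^{\mX_n}(y_n)) \leq \beta + \nu_n(R_n^c)$. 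All remaining steps are standard measure-theoretic manipulations resting on the Skorokhod reduction and the deterministic equivalence cited just above.
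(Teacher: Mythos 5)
The paper does not prove this lemma itself; it imports it verbatim from \cite[Cor.~3.4]{stufler2022mass}, so there is no internal proof to compare against. Read on its own merits, your argument is correct and self-contained. The Skorokhod reduction is legitimate because $(\mathfrak{K}, d_{\mathrm{GHP}})$ is Polish (as the paper records) and both sides of the equivalence depend only on the marginal laws of $\mX$ and of the individual $\mX_n$, which Skorokhod coupling preserves. Your sandwich
\[
\rho_{\epsilon/2}^{\mX} \le \liminf_n \rho_{\epsilon}^{\mX_n} \le \limsup_n \rho_{\epsilon}^{\mX_n} \le \rho_{2\epsilon}^{\mX}
\]
is the right quantitative strengthening of the deterministic three-way equivalence already stated in the paper: the lower half follows from $\rho_{\epsilon-\alpha_n}^{\mX} - \alpha_n \le \rho_\epsilon^{\mX_n}$ where $\alpha_n$ is the distortion-plus-mass-defect of a near-optimal GHP correspondence/coupling, and the upper half follows by the sketch you give. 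I checked that this strengthening is genuinely needed for (2)$\Rightarrow$(1): the monotonicity of $\rho_\epsilon$ in $\epsilon$ combined only with the qualitative equivalence ``\,$\liminf_n \rho_\epsilon^{\mX_n}=0$ for some $\epsilon$'' fails to locate a fixed radius at which the defect occurs, and moreover it only controls $\rho$ along a subsequence, so the Fatou step would point the wrong way. Your sandwich fixes both problems by producing convergence to zero at the explicit radius $1/(2k)$. The continuity-of-probability step in (1)$\Rightarrow$(2) works because $F_{N,k}$ is nondecreasing in both indices, so one may pass to the diagonal $F_{m,m}$ to obtain a genuinely nested family. The characterization of ``lacks full support'' as $\bigcup_k \{\rho_{1/k}^\mX = 0\}$ also holds, using compactness of $\mX$ in the forward direction. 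In short: the approach is sound and complete modulo the routine measurability of $\rho_\epsilon^{\mX_n}$, which you reasonably take for granted.
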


In other words, if the limit is almost surely fully supported, then for large $n$ each closed ball in $\mathsf{X}_n$ with small mass is likely to  have small radius.

\subsection{The Brownian map}
\label{sec:brownianmap}
We recall the construction of the Brownian map following~\cite{MR3112934,MR3070569}. The head of the Brownian snake $(\mathbf{e}, \mathbf{Z})$ is a random element of $\cC([0,1], \ndR^2)$. Here $\mathbf{e}$ denotes a Brownian excursion normalized to have duration one. Conditionally on $\mathbf{e}$, the random function $\mathbf{Z}$ is a continuous, centred Gaussian process with covariance \[
\mathbf{Cov}(\mathbf{Z}(s), \mathbf{Z}(t)) = \min \{ \mathbf{e}(x) \mid \min(s,t) \le x \le \max(s,t)\}, \qquad s,t \in [0,1].
\]
Let $d_{\mathbf{e}}$ denote the pseudo-distance on $[0,1]$ defined by
\[
d_{\mathbf{e}}(s,t) = \mathbf{e}(s) + \mathbf{e}(t) - 2 \min \{ \mathbf{e}(x) \mid \min(s,t) \le x \le \max(s,t)\}, \qquad s,t \in [0,1].
\]
Similarly, let $d_{\mathbf{Z}}$ denote the pseudo-distance on $[0,1]$ defined by
\[
d_{\mathbf{Z}}(s,t) = \mathbf{Z}(s) + \mathbf{Z}(t) - 2 \max\left(\min_{x \in [\min(s,t), \max(s,t)]} \mathbf{Z}(x), \min_{x \in [0,1] \, \setminus \, ]\min(s,t), \max(s,t)[ } \mathbf{Z}(x)     \right)
\]
for $s,t \in [0,1]$. Let $\mathbf{D} \in \cC([0,1], \ndR^2)$ denote the largest pseudo-distance on $[0,1]$ that satisfies $\mathbf{D} \le d_{\mathbf{Z}}$ and $\{ d_{\mathbf{e}} = 0\} \subset \{ \mathbf{D} =0 \}$. The Brownian map $(\mathbf{M}, d_{\mathbf{M}})$ may be defined as the quotient  space $[0,1]/\{\mathbf{D}=0\}$ corresponding to $\mathbf{D}$, where any points $s,t$ are identified if $\mathbf{D}(s,t)=0$. The Borel probability measure $\mu_{\mathbf{M}}$ on  $(\mathbf{M}, d_{\mathbf{M}})$ is defined as the push-forward of the Lebesgue measure along the canonical surjection $[0,1] \to \mathbf{M}$.

The measure $\mu_{\mathbf{M}}$ on the Brownian map is almost surely fully supported and diffuse. This enables the application of Lemma~\ref{le:conconlambda} and Lemma~\ref{co:fullrandom} to any sequence of random spaces converging in distribution to the Brownian map.

\section{Simple triangulations and $3$-connected cubic planar graphs}

\subsection{Simple triangulations}

A \emph{planar map} is an embedding of a connected multi-graph onto the $2$-dimensional sphere. Edges of the multi-graph are represented by arcs that may only intersect at their endpoints.  The connected components created when removing a planar map from the $2$-sphere are its \emph{faces}. All faces are required to be homeomorphic to  open discs. 

We view planar maps up to orientation-preserving homeomorphism. This way, there is only a finite number of planar maps with a given number of edges. In order to eliminate symmetries, we distinguish and orient a root edge.

The number of half-edges on the boundary of a face is its \emph{degree}. That is, any edge that has both sides incident to the same face is counted twice. The face to the right of the root edge is called the \emph{outer face}, the one to the left the \emph{root face}. 

A planar map is called a \emph{triangulation}, if all its faces have degree $3$. It is called \emph{simple}, if it has no multi-edges or loops. This way, a triangulation with at least~$4$ vertices is simple if and only if it is $3$-connected. The reader should beware that this convention is not used uniformly throughout the literature. In particular, Tutte~\cite{zbMATH03169204} additionally requires a simple triangulation to have no separating $3$-cycles.

We let $\mQ_n$ denote the uniform simple triangulation with $n+2$ vertices.\footnote{It would seem  natural to use the letter $T$ to denote triangulations instead. However, the author uses the letter $T$ exclusively to refer to trees.} We equip it with the graph distance $d_{\mQ_n}$ and the uniform measure $\mu_{\mQ_n}$ on its set of vertices. The asymptotic shape of $\mQ_n$ was described in~\cite{MR3706731} by establishing the Brownian map as its scaling limit:
\begin{lemma}[{\cite[Thm. 1.1]{MR3706731}}]
	\label{le:GHPconv}
	As $n \to \infty$,
	\[
		(\mQ_n, (3/8)^{1/4} n^{-1/4} d_{\mQ_n}, \mu_{\mQ_n}) \convdis (\mathbf{M}, d_{\mathbf{M}}, \mu_{\mathbf{M}})
	\]
	in the Gromov--Hausdorff--Prokhorov sense.
\end{lemma}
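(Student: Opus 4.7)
Since the statement is the main theorem of Addario-Berry and Albenque~\cite{MR3706731}, the plan is to outline their strategy rather than redo the proof. The overall framework is the template pioneered for quadrangulations by Chassaing-Schaeffer and Le Gall. First, one establishes a combinatorial bijection between uniform rooted simple triangulations with $n+2$ vertices and a class of labelled discrete trees of size proportional to $n$, designed so that the labels encode, at least approximately, graph distances from a distinguished base vertex. Second, one proves joint convergence of the rescaled contour and label processes of these trees to the head of the Brownian snake $(\mathbf{e}, \mathbf{Z})$. Third, one transfers this to convergence of the appropriately rescaled two-point function of $\mQ_n$ toward the pseudo-distance $\mathbf{D}$. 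Finally, one concludes via tightness in the GHP topology together with Le Gall's uniqueness result~\cite{MR3112934}, which identifies the limit of any Brownian-snake-like encoding as the Brownian map.

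The two invariance principles underlying the second step are Aldous' theorem for conditioned Galton-Watson trees (applied to the shape of the encoding tree, which under the uniform measure on simple triangulations has an explicit finite-variance offspring distribution) and a joint functional central limit theorem for the labels, whose increments along edges are bounded and centred. The specific constant $(3/8)^{1/4}$ emerges from tracking the offspring variance and the label-increment variance through these two invariance principles. Given the third step, the fourth is essentially routine: tightness of the rescaled spaces follows from tightness of the encoding processes, and any subsequential limit is a quotient of $[0,1]$ by a pseudo-distance dominated by $\mathbf{D}$ and coinciding with it on a dense set, so Le Gall's characterisation pins it down to $(\mathbf{M}, d_{\mathbf{M}})$. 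The push-forward of Lebesgue measure along the canonical vertex indexing yields $\mu_{\mathbf{M}}$ as the limit of $\mu_{\mQ_n}$, upgrading Gromov--Hausdorff to Gromov--Hausdorff--Prokhorov convergence.

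The main obstacle, and the principal technical contribution of Addario-Berry and Albenque beyond this template, is the third step: in contrast to Schaeffer's bijection for quadrangulations, the bijection used here does \emph{not} recover the graph distance exactly, but only up to an error that must be shown negligible on the scale $n^{1/4}$. Uniformly controlling the discrepancy between the label-induced pseudo-distance and the true graph distance on $\mQ_n$ demands a delicate combinatorial analysis of geodesics in a random simple triangulation, in particular ruling out the existence of ``shortcuts'' that would be invisible to the label encoding. This is the technical heart of the proof; once it is in place, the remaining ingredients are standard applications of the now-classical machinery recalled in Section~\ref{sec:brownianmap} together with Proposition~\ref{pro:ghpcoupling}.
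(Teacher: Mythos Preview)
The paper does not give its own proof of this lemma: it is stated purely as a citation of the main theorem of Addario-Berry and Albenque~\cite{MR3706731}, with no accompanying argument. Your proposal correctly recognises this and supplies a reasonable high-level sketch of the strategy in~\cite{MR3706731}; nothing further is needed here.
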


The scaling limit for $\mQ_n$ also holds for the stationary measure. This may be deduced from Lemma~\ref{le:GHPconv} using Corollary~\ref{co:fullrandom} and analogous arguments as for~\cite[Lem. 5.1]{MR3729639}:

\begin{corollary}
	\label{co:coqconv}
Let $\tilde{\mu}_{\mQ_n}$ of $\mQ_n$ the stationary distribution on $\mQ_n$, that describes a random vertex selected with probability proportional to its degree. Then
\begin{align*}
	(\mQ_n, (3/8)^{1/4} n^{-1/4} d_{\mQ_n}, \tilde{\mu}_{\mQ_n}) \convdis (\mathbf{M}, d_{\mathbf{M}}, \mu_{\mathbf{M}})
\end{align*}
as $n \to \infty$, in the Gromov--Hausdorff--Prokhorov sense.
\end{corollary}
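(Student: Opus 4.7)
The plan is to apply the triangle inequality for the Gromov--Hausdorff--Prokhorov metric: given Lemma~\ref{le:GHPconv}, it suffices to show that the Prokhorov distance between $\mu_{\mQ_n}$ and $\tilde{\mu}_{\mQ_n}$ on the rescaled space $(\mQ_n, (3/8)^{1/4}n^{-1/4}d_{\mQ_n})$ converges to zero in probability. By Euler's formula a simple triangulation on $n+2$ vertices has $3n$ edges, so $\tilde{\mu}_{\mQ_n}(v) = d_{\mQ_n}(v)/(6n)$ whereas $\mu_{\mQ_n}(v) = 1/(n+2)$.

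Fix $\epsilon > 0$. First I produce a partition of $\mQ_n$ into a bounded number of macroscopic clusters of rescaled diameter less than $\epsilon$. Since $\mu_{\mathbf{M}}$ is almost surely fully supported, Lemma~\ref{co:fullrandom} combined with Lemma~\ref{le:GHPconv} yields constants $\delta, \delta' > 0$ such that, with high probability, every rescaled ball in $\mQ_n$ of radius $\epsilon/3$ (respectively $\epsilon/6$) contains at least $\delta n$ (respectively $\delta' n$) vertices. Picking a maximal $(\epsilon/3)$-separated set of centres $x_1, \dots, x_k$ and assigning each vertex to the nearest centre yields a partition $\mQ_n = C_1 \sqcup \cdots \sqcup C_k$ with $k \le 1/\delta$, each $C_i$ of rescaled diameter at most $2\epsilon/3 < \epsilon$, and $|C_i| \ge \delta' n$ (since $C_i$ contains the rescaled ball of radius $\epsilon/6$ around $x_i$).

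The central step is to show that on these clusters the two measures agree up to $o(1)$:
\[
	\max_{1 \le i \le k} \left|\tilde{\mu}_{\mQ_n}(C_i) - \mu_{\mQ_n}(C_i)\right| = o_p(1).
\]
A direct computation gives $\tilde{\mu}_{\mQ_n}(C_i) - \mu_{\mQ_n}(C_i) = \frac{1}{6n}\sum_{v \in C_i}(d_{\mQ_n}(v) - 6) + O(1/n)$, so this reduces to concentration of the average vertex degree in each macroscopic cluster around the global value~$6$. I would establish this in two substeps: (i) uniform integrability of the vertex degree in $\mQ_n$, i.e.\ for every $\eta>0$ there is $M$ such that $\sum_{v : d_{\mQ_n}(v) > M} d_{\mQ_n}(v) \le \eta n$ with high probability, and (ii) a spatial homogeneity statement ensuring that vertices of any fixed bounded degree are distributed roughly uniformly across macroscopic regions of $\mQ_n$. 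Both ingredients follow from the local convergence of $\mQ_n$ towards the UIPT together with its root-invariance, along the lines of~\cite[Lem.~5.1]{MR3729639} used for quadrangulations and their cores.

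Once the cluster estimate is established, I conclude via Proposition~\ref{pro:ghpcoupling}: the correspondence $R = \bigcup_i C_i \times C_i$ satisfies $\tfrac{1}{2}\mathrm{dis}(R) \le \epsilon$, and the coupling that within each cluster pairs $\min(\mu_{\mQ_n},\tilde{\mu}_{\mQ_n})$ on the diagonal and freely redistributes the residual mass satisfies $\nu(R) \ge 1 - \sum_i |\tilde{\mu}_{\mQ_n}(C_i) - \mu_{\mQ_n}(C_i)| = 1 - o_p(1)$. Hence the rescaled GHP-distance between the two spaces is at most $\epsilon$ with probability tending to $1$, and since $\epsilon$ was arbitrary the corollary follows. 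The main obstacle is substep~(ii): unlike in the quadrangulation setting where face degrees are deterministically~$4$, vertex degrees in $\mQ_n$ are genuinely random, so ruling out clustering of high-degree vertices in small macroscopic regions requires a careful local-convergence input.
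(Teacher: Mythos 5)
Your overall strategy is reasonable, and your cluster-partition construction together with Proposition~\ref{pro:ghpcoupling} would close the argument \emph{if} you could establish the central cluster estimate $\max_i |\tilde{\mu}_{\mQ_n}(C_i) - \mu_{\mQ_n}(C_i)| = o_p(1)$. But you have correctly diagnosed that substep~(ii) is the crux, and the machinery you propose for it does not actually deliver. Local convergence to the UIPT together with re-rooting invariance controls the law of the degree (or a bounded-radius neighbourhood) around a \emph{uniformly random} vertex; it does not control degree averages over a \emph{geometrically defined} macroscopic region $C_i$, because the clusters are measurable with respect to the map itself and carry positive-probability geometric information that is invisible to a root placed uniformly at random. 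Ruling out that the excess $\frac{1}{n}\sum_{v\in C_i}(d_{\mQ_n}(v)-6)$ concentrates away from zero in some genuine macroscopic region would require a genuine second-moment or ergodicity-type argument that you have not supplied, and indeed the citation to a lemma about quadrangulations (where face degrees are deterministic) does not transfer. So as written this is a gap, not a reduction.

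The paper takes a route that avoids the issue entirely. Rather than proving a two-sided cluster estimate, it establishes the one-sided inequality
\[
\tilde{\mu}_{\mQ_n}(Q^+) \;\ge\; \mu_{\mQ_n}(Q) - \frac{K}{n}
\]
for an arbitrary induced subgraph $Q$ with $K$ connected components, where $Q^+$ is the closed $1$-neighbourhood of $Q$. This is a \emph{deterministic, combinatorial} inequality that follows from Euler's formula applied to $Q$ viewed as a triangulation with boundaries: every internal face has degree $3$, and every boundary edge of $Q$ is incident to at least one boundary face, yielding $\sum_{v \in Q^+} d_{\mQ_n}(v) \ge 6(|Q|-K)$. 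To convert this into a Prokhorov bound one then only needs the \emph{full support} side (Corollary~\ref{co:fullrandom} with Lemma~\ref{le:GHPconv}), which forces each connected component of the $2\epsilon n^{1/4}$-hull of any $Q$ to be macroscopic and hence bounds $K$ by a constant $1/\delta$. The recipe you need is the defining one-sided inequality of the Prokhorov metric, $\mu(A) \le \nu(U_\epsilon(A)) + \epsilon$ for all Borel $A$, which is all one must check since $d_{\mathrm{P}}$ is symmetric. So the paper never needs spatial homogeneity of vertex degrees, never sees the clustering obstacle you flagged, and the whole proof runs on a single combinatorial inequality plus the mass-radius equivalence you also invoke. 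Your approach, even if completed, would be strictly harder; the paper's observation that a thickened region in a triangulation already carries the right amount of degree mass, deterministically, is the key simplifying idea.
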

\begin{proof}
	Let $Q$ be an   non-empty proper induced subgraph of $\mQ_n$. We say a face of $\mQ_n$ is an internal face of $Q$, if all three vertices incident to the face belong to $Q$. If exactly two of the vertices of the face belong to $Q$, we say it is a boundary face of $Q$. We let $I$ denote the number of internal faces of $Q$.
	
	Let $K \ge 1$ denote the number of connected components of $Q$. Thus, in case $K \ge 2$, $Q$ has a unique face that cannot be homeomorphic to a disc. We  are going to indulge in a slight abuse of notation and nevertheless treat $Q$ as a planar map with the usual concepts of faces and degrees of faces. Specifically, we view $Q$ as a triangulation with boundaries. That is, apart from the internal faces it has a number $H \ge 1$ of ``holes'' which are faces of $Q$ that do not correspond to faces of $\mQ_n$, since $\mQ_n$ has additional vertices inside that face.   We  let $L$ denote the sum of the degrees of the holes.
	
	 We let $V$ denote the total number of vertices of $Q$. By Euler's formula, it holds that
	\[
		V - E + (I+H) = 1 + K.
	\]
	Moreover, since any internal face has degree $3$, 
	\[
		2 E = 3I + L.
	\]
	Hence,
	\begin{align}
		V - (L+I)/2 + H = 1 + K.
	\end{align}

	Let $Q^+$ denote the subgraph induced by all vertices of $\mQ_n$ with graph distance at most $1$ from $Q$. Note that if $f$ is a boundary face of $Q$, then all edges of $f$ lie within~$Q^+$. Furthermore, any boundary face of $Q$ is incident to exactly one edge of $Q$, hence $Q$ has at least $L$ boundary faces.
	
	Let $I^+$ denote the number of faces of $\mQ_n$ that are internal to $Q^+$. It follows that the degrees $d_{\mQ_n}(v)$ of vertices $v \in Q^+$ satisfy
	\begin{align*}
		\sum_{v \in Q^+} d_{\mQ_n}(v) &\ge 3 I^+ \\
		&\ge 3(I + L) \\
		&= 6(V + H - K - 1)  \\
		&\ge 6(V-K).
	\end{align*}
	Note that $\mQ_n$ has $n+2$ vertices and therefore $3n$ edges and $2n$ faces. Consequently,
	\[
		\sum_{v \in Q^+} d_{\mQ_n}(v) = 6n.
	\] It follows that
	\begin{align}
		\label{eq:arbitraryQ}
		\tilde{\mu}_{\mQ_n} (Q^+) \ge \frac{V-K}{n} \ge \mu_{\mQ_n}(Q) - \frac{K}{n}.
	\end{align}
	Note that this inequality also holds in the trivial cases when $Q$ is empty or $Q = \mQ_n$. 

	Let $\epsilon, \epsilon'>0$  be given. Since the Brownian map is almost surely diffuse, it follows by Lemma~\ref{le:GHPconv} and  Corollary~\ref{co:fullrandom} that there exists $\delta>0$ such that
	\begin{align}
		\Prb{\inf_{x \in \mQ_n} \mu_{\mQ_n} (C_{\epsilon n^{1/4}}^{Q_n}(x)) \ge \delta} \ge 1- \epsilon'
	\end{align}
	for all large enough $n$. Now, if we are in the case
	\[
	\inf_{x \in \mQ_n} \mu_{\mQ_n} (C_{\epsilon n^{1/4}}^{Q_n}(x)) \ge \delta,
	\]
	then each connected component of the $2\epsilon$-hull $U_{2 \epsilon n^{1/4}}^{\mQ_n}(Q)$ contains a closed $\epsilon$-ball and has hence at least $\delta(n+2)$ vertices. Since Inequality~\eqref{eq:arbitraryQ} holds for arbitrary subsets, it follows that
	\begin{align*}
		\tilde{\mu}_{\mQ_n} ((U_{2 \epsilon n^{1/4}}^{\mQ_n}(Q) )^+) &\ge \mu_{\mQ_n}(U_{2 \epsilon n^{1/4}}^{\mQ_n}(Q)) - \frac{1}{\delta n} \\
		&\ge \mu_{\mQ_n}(Q) - \frac{1}{\delta n}.
	\end{align*}
Thus
	\begin{align*}
		\mu_{\mQ_n}(Q)  \le \frac{1}{\delta n} + \tilde{\mu}_{\mQ_n} \left(U_{2 \epsilon n^{1/4}+2}^{\mQ_n}(Q) \right).
	\end{align*}
Since $\epsilon$ and $\epsilon'$ where arbitrary, it follows that
\begin{align}
	d_{\mathrm{P}}(\mu_{\mQ_n}, \tilde{\mu}_{\mQ_n} )\convp 0
\end{align}
when considering ${\mu}_{\mQ_n}$ and $\tilde{\mu}_{\mQ_n}$ as measures on the rescaled triangulation $(\mQ_n, (3/8)^{1/4} n^{-1/4} d_{\mQ_n})$.
By Lemma~\ref{le:GHPconv} it follows that
\begin{align*}
	(\mQ_n, (3/8)^{1/4} n^{-1/4} d_{\mQ_n}, \tilde{\mu}_{\mQ_n}) \convdis (\mathbf{M}, d_{\mathbf{M}}, \mu_{\mathbf{M}}).
\end{align*}
\end{proof}

The framework in~\cite{MR3706731} also allows us to deduce a deviation bound for the diameter~$\Di(\mQ_n)$: 

\begin{proposition}
	\label{pro:diamtria}
	For any $\epsilon>0$ there are constants $C,c$ such that for all integers $n \ge 2$
	\[
		\Pr{\Di(\mQ_n) > n^{1/4 + \epsilon}} < C \exp(-cn^{\epsilon}).
	\]
\end{proposition}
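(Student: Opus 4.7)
The plan is to extract a tail bound for $\Di(\mQ_n)$ from the combinatorial framework that underlies the scaling limit in Lemma~\ref{le:GHPconv}. In the approach of \cite{MR3706731}, the simple triangulation $\mQ_n$ is encoded by a Poulalhon--Schaeffer-type bijection into a pair $(\mathfrak{T}_n,\ell_n)$, where $\mathfrak{T}_n$ is a random plane tree with $\Theta(n)$ vertices and of Galton--Watson type with exponential-moment offspring (after the appropriate conditioning), and $\ell_n\colon V(\mathfrak{T}_n)\to\ndZ$ is an integer label with uniformly bounded increments along edges. Graph distances in $\mQ_n$ are bounded above by label differences: up to normalising the label at the root, there is an absolute constant $C>0$ with
\[
\Di(\mQ_n) \le 2C \max_{v\in V(\mathfrak{T}_n)} |\ell_n(v)|.
\]
The first step is to isolate this deterministic inequality from the proof of Lemma~\ref{le:GHPconv}, where it enters implicitly in the comparison of the discrete distance on $\mQ_n$ with the label-based pseudo-distance that converges to $\mathbf{D}$.

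Granting the inequality, the bound follows by combining three standard estimates. By classical height deviation bounds for size-conditioned Galton--Watson trees with exponential-moment offspring, for any $\delta>0$,
\[
\Pr{H(\mathfrak{T}_n) > n^{1/2+\delta}} \le C_1 \exp(-c_1 n^{\delta}).
\]
Conditionally on $\mathfrak{T}_n$, the label $\ell_n(v)$ at a vertex $v$ of depth $h$ is a sum of $h$ independent increments of absolute value at most some constant, so Azuma--Hoeffding yields
\[
\Pr{|\ell_n(v)|>t \mid H(v)=h} \le 2\exp(-c_2 t^2/h).
\]
A union bound over the at most $n$ vertices of $\mathfrak{T}_n$ lifts this to a tail for $\max_v |\ell_n(v)|$.

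Taking $\delta=\epsilon$ and $t=n^{1/4+\epsilon}/(2C)$, on the event $\{H(\mathfrak{T}_n)\le n^{1/2+\epsilon}\}$ one has $t^2/H(\mathfrak{T}_n) \ge c_3 n^{\epsilon}$, so the union bound produces $2n\exp(-c_4 n^{\epsilon})$; the polynomial factor $n$ is absorbed by the stretched-exponential and the result is $O(\exp(-c n^{\epsilon}))$ for large $n$. Combining with the tree-height tail and the deterministic diameter inequality yields the claimed bound on $\Pr{\Di(\mQ_n) > n^{1/4+\epsilon}}$.

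The main obstacle is the first step: extracting the precise deterministic diameter-by-label bound, together with the Galton--Watson description of $\mathfrak{T}_n$ and the bounded-increment (or at least exponential-moment) property of $\ell_n$, from the bijective construction used in \cite{MR3706731}. These ingredients appear inside the proof of the scaling limit but are not stated as stand-alone lemmas there, so they must be read off from the construction of the bijection and the analysis of the associated label process. Once they are in hand, the remainder of the argument is routine random-walk and conditioned-tree concentration.
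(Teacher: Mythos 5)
Your proposal follows essentially the same route as the paper: encode $\mQ_n$ via the Poulalhon--Schaeffer-type bijection of \cite{MR3706731} as a labelled size-conditioned Galton--Watson tree, bound $\Di(\mQ_n)$ by a constant times the maximum absolute label, then combine the sub-Gaussian height tail of \cite{MR3077536} with Azuma--Hoeffding and a union bound over vertices. The paper differs only in being explicit about the ingredients you flag as needing to be extracted (the offspring distribution $\Pr{\xi=k}=\tfrac{27}{128}(k+1)(k+2)(1/4)^k$, the $\{-1,0,1\}$ edge increments, and the distance bound via \cite[Cor.~7.5, Fact~5.9]{MR3706731}), and in choosing $\delta=\epsilon/2$ rather than $\delta=\epsilon$, which makes no difference to the final exponent.
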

\begin{proof}
	Let $\xi$ denote the random non-negative integer with distribution
	\[
		\Pr{\xi = k} = \frac{27}{128} (k+1)(k+2)(1/4)^k.
	\]
	Note that $\Ex{\xi}=1$. Let $\mT_n$ denote a $\xi$--Bienaym\'e--Galton--Watson tree conditioned on having $n$ vertices.  By the tail-bounds~\cite[Thm. 1.2]{MR3077536}, it follows that the height $\He(\mT_n)$ satisfies for all $x \ge 0$
	\begin{align}
		\label{eq:comb1}
		\Pr{\He(\mT_n) \ge x} \le C_0 \exp( - c_0 x^2 / n)
	\end{align}
	for some constants $C_0, c_0>0$ that do not depend on $n$ or $x$.

	For any vertex $v$ of $\mT_n$ with children $v_1, \ldots, v_d$ we add random labels $D_1, \ldots, D_d$ from $\{-1, 0, 1\}$ to the edges between $v$ and its children in a uniformly selected manner so that $(D_i)_{1 \le i \le d}$ is non-decreasing.
	
	After assigning labels to all edges of $\mT_n$ in this way, we assign an integer label $Y(v)$ to each vertex $v$ of $\mT_n$ given by the sum of $2$ and the labels on the edges from the unique path between $v$ and the root of $\mT_n$. 
	
	As described in~\cite[Sec. 5]{MR3706731}, the random $n+2$-vertex triangulation may be constructed from the decorated tree $(\mT_n, (Y(v))_{v \in \mT_n})$ by adding two vertices $A,B$ and a number of edges so that $A$ and $B$ are neighbours. This is done in a way, so that by~\cite[Cor. 7.5, Fact 5.9]{MR3706731}
	\begin{align}
		\label{eq:comb2}
		\sup_{ u \in \mT_n} d_{\mQ_n}(u, A) \le 2 \sup_{u \in \mT_n} |Y(u)| + O(1).
	\end{align}
	
	Let $u_n$ denote a random vertex of $\mT_n$. Conditional on having a height $\he_{\mT_n}(u_n) = h$ for $h \ge 0$, the labels along the path from the root to $u_n$ are that of a centred $h$-step random walk with increments in $\{-1,0,1\}$.  By the Azuma--Hoeffding inequality, it follows that
	\begin{align}
		\Prb{|Y(u_n)| \ge x  \mid \he_{\mT_n}(u_n) = h} \le 2 \exp( -x^2 / (2h)).
	\end{align}
	Hence
	\begin{align}
		\label{eq:comb3}
		\Prb{ \sup_{u \in \mT_n} |Y(u)| \ge x \,\, \Big\vert \,\, \He(\mT_n)  } \le 2 n \exp\left( -\frac{x^2}{2\He(\mT_n)} \right).
	\end{align}
	Combining~\eqref{eq:comb1},~\eqref{eq:comb2}, and~\eqref{eq:comb3} yields
	\begin{align*}
		\Pr{ \Di(\mQ_n) \ge n^{1/4 + \epsilon} } &\le C_0 \exp(-c_0 n^{\epsilon}) + \Pr{\Di(\mQ_n) \ge n^{1/4 + \epsilon} , \He(\mT_n) \le n^{1/2 + \epsilon/2}} \\
		&\le C_0 \exp(-c_0 n^{\epsilon}) + 2n \exp\left(- n^{3 \epsilon /2 + o(1)} \right) \\
		&\le C \exp(-c n^{\epsilon})
	\end{align*}
for some constants $C,c>0$ that do not depend on $n$.
\end{proof}

\subsection{The dual map construction}

The \emph{dual map} $Q^\dagger$ of a triangulation $Q$ is the  ``red'' cubic planar map constructed by placing a red vertex inside each face of $Q$ and then adding for each edge $e$ of $Q$ a red edge between the red vertices inside the two faces adjacent to $e$. These faces may be identical, and in this case the corresponding red edge is a loop. The root edge of the dual map $Q^\dagger$ is the red edge corresponding to the root-edge of $Q$, oriented in a canonical way.
For any vertex $u \in Q$ and any vertex $f \in Q^\dagger$ we write $u \triangleleft f$ if $u$ is adjacent to the face of $Q$ that corresponds to $f$.

The graph distance  $d_Q$ in $Q$ and the graph distance $d_{Q^\dagger}$ in the dual $Q^\dagger$ satisfy crude bounds:
\begin{proposition}
	\label{pro:roughbound}
	Let $u, v \in Q$ denote vertices in a triangulation $Q$ and let $f,g \in Q^\dagger$ be points in the dual with $u \triangleleft f$ and $v \triangleleft g$. Let $\Delta(Q) = \max_{v \in Q} d_Q(v)$ denote the largest among the degrees $d_Q(v)$ of vertices $v \in Q$.  Then
	\[
		d_Q(u,v) \le d_{Q^\dagger}(f,g) +1
	\]
	and
	\[
		d_{Q^\dagger}(f,g) \le (d_Q(u,v)+1) \Delta(Q).
	\]
\end{proposition}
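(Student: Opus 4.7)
The plan for the first inequality is to take a shortest dual path and construct a companion path in $Q$ by selecting one vertex from each visited face. Specifically, let $f = f_0, f_1, \ldots, f_k = g$ be a geodesic in $Q^\dagger$ with $k = d_{Q^\dagger}(f,g)$. Since $Q$ is a triangulation and consecutive faces $f_{i-1}, f_i$ share an edge, they share two common vertices. Starting from $u_0 := u$, I would inductively define $u_i$ to be a vertex incident to $f_i$ satisfying $d_Q(u_{i-1}, u_i) \le 1$: if $u_{i-1}$ lies on the edge shared by $f_{i-1}$ and $f_i$, it suffices to take $u_i := u_{i-1}$; otherwise $u_{i-1}$ is the third vertex of the triangle $f_{i-1}$ and is adjacent to both endpoints of the shared edge, so one can pick $u_i$ to be one of those endpoints. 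This yields $d_Q(u, u_k) \le k$. Since $u_k$ and $v$ are both incident to the same triangular face $f_k = g$, one has $d_Q(u_k, v) \le 1$, and therefore $d_Q(u, v) \le k + 1$.

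For the second inequality, I would take a shortest path $u = w_0, w_1, \ldots, w_\ell = v$ in $Q$ with $\ell = d_Q(u,v)$ and ``lift'' it to the dual using the wheel structure around each vertex. The key observation is that, for any vertex $w$ of $Q$, the faces incident to $w$ form a cycle of length $d_Q(w) \le \Delta(Q)$ in $Q^\dagger$ (consecutive faces around $w$ share an edge incident to $w$, yielding a dual edge), so any two of them are at dual distance at most $\Delta(Q)$. For each $i = 0, \ldots, \ell-1$, I would select a face $F_i \in Q^\dagger$ incident to the edge $\{w_i, w_{i+1}\}$, which is automatically incident to both endpoints. Concatenating $\ell + 1$ dual segments then does the job: from $f$ to $F_0$ via the wheel at $w_0 = u$, from $F_{i-1}$ to $F_i$ via the wheel at $w_i$ for $1 \le i \le \ell - 1$, and from $F_{\ell-1}$ to $g$ via the wheel at $w_\ell = v$. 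Each segment has length at most $\Delta(Q)$, giving $d_{Q^\dagger}(f, g) \le (\ell + 1)\Delta(Q)$.

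Both arguments are purely combinatorial, and I do not anticipate any genuine obstacle: the small case distinction in the first step is immediate from the fact that each face of $Q$ is a triangle, and the wheel structure used in the second step is a standard feature of the primal/dual correspondence for triangulations.
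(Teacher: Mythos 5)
Your proof is correct and follows essentially the same approach as the paper: for the first inequality you walk along a dual geodesic picking a companion primal vertex in each face, and for the second you lift a primal geodesic to the dual by traversing the wheel (cycle of faces) around each vertex, bounding each segment by the vertex degree. The small bookkeeping differences (indexing, whether one tracks the shared edge or a face per primal edge) do not change the substance.
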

\begin{proof}
	We start with the first inequality.
	If $f = g$, then clearly this is true. 
	If $f_1 \ne f_2$ are faces of $Q$ sharing an edge $e$, then any vertex on the boundary of $f_1$ has graph distance at most $1$ from the edge $e$. Hence if $f_1, \ldots, f_k$ is a sequence of faces such that $f_i$ and $f_{i+1}$ share an edge $e_{i+1}$ for each $1 \le i \le k-1$, then starting from any vertex on the boundary of $f_1$ we may reach the edge $e_{k}$ by a path of length at most $k-1$. Any vertex on the boundary of $f_k$ has graph distance at most $1$ from $e_k$, showing that arbitrary pairs of points from the boundaries of $f_1$ and $f_k$ have graph distance at most $k$. This proves the first inequality.
	
	As for the second inequality, let $v_1, \ldots, v_k$ be a path in $Q$ and let $f,g \in Q^\dagger$ with $v_1 \triangleleft f$ and $v_k \triangleleft g$. The edge $v_1 v_2$ is incident to a face $f_1 \in Q^\dagger$ that we can reach from $f$ in $Q^\dagger$ by traversing the faces incident to $v_1$ in a circular manner. This takes at most $d_Q(v_1)-1$ steps. Proceeding in this way, we can walk from $g$ to  a face incident to the edge $v_{k-1}v_k$ and from there to the face $g$ by a path in $Q^\dagger$ with length at most
	\[
		\sum_{i=1}^{k} (d_Q(v_i) -1) \le k \Delta(Q).
	\]
	This proves the second inequality.
\end{proof}
This crude bound suffices to get a deviation bound for the dual:
\begin{proposition}
	\label{pro:produalbound}
	For any $\epsilon>0$ there are constants $C,c>0$ such that for all integers $n \ge 2$
	\[
	\Pr{\Di(\mQ_n^\dagger) > n^{1/4 + \epsilon}} < C \exp(-cn^{\epsilon/2}).
	\]
\end{proposition}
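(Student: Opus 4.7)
The plan is to combine Proposition~\ref{pro:diamtria} with the crude estimate from Proposition~\ref{pro:roughbound}. The latter gives $\Di(\mQ_n^\dagger) \le (\Di(\mQ_n)+1)\Delta(\mQ_n)$, where $\Delta(\mQ_n)$ denotes the maximum vertex degree of $\mQ_n$. Hence it suffices to bound $\Di(\mQ_n)$ and $\Delta(\mQ_n)$ separately by suitable powers of $n$, each with very high probability.

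First I would invoke Proposition~\ref{pro:diamtria} with $\epsilon/2$ in place of $\epsilon$ to obtain constants $C_1, c_1 > 0$ with $\Pr{\Di(\mQ_n) > n^{1/4 + \epsilon/2}} \le C_1 \exp(-c_1 n^{\epsilon/2})$.

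The second and main ingredient is a bound of the form $\Pr{\Delta(\mQ_n) > n^{\epsilon/2}} \le C_2 \exp(-c_2 n^{\epsilon/2})$. The key input is exponential tails on the degree of an individual vertex: there exist $K > 0$ and $\alpha \in (0,1)$ such that for any vertex $v$ of $\mQ_n$ and any integer $k \ge 1$,
\[
\Pr{d_{\mQ_n}(v) \ge k} \le K \alpha^k.
\]
This can be derived by a standard surgery argument for simple triangulations: excising the open star around a vertex of degree $k$ decomposes $\mQ_n$ into a triangulated polygon of boundary length $k$, and the Brown--Tutte enumeration of such polygons produces an exponential penalty in $k$ at the level of the generating series; alternatively one can read the bound off the $\xi$-BGW encoding used in the proof of Proposition~\ref{pro:diamtria}, where the offspring distribution has exponential tails. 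A union bound over the $n+2$ vertices then gives $\Pr{\Delta(\mQ_n) > n^{\epsilon/2}} \le K(n+2) \alpha^{n^{\epsilon/2}}$, which is of the required form.

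Intersecting the two very-high-probability events and applying Proposition~\ref{pro:roughbound} yields, for all sufficiently large $n$,
\[
\Di(\mQ_n^\dagger) \le (\Di(\mQ_n)+1)\Delta(\mQ_n) \le 2 n^{1/4 + \epsilon/2} \cdot n^{\epsilon/2} \le n^{1/4 + \epsilon},
\]
outside an event whose probability is at most $C \exp(-c n^{\epsilon/2})$ for suitable $C,c>0$; adjusting the constants to absorb small values of $n$ completes the argument. The main obstacle is providing a clean justification of the exponential degree tail uniformly in $n$: while essentially folklore, this step requires either explicitly invoking the classical enumeration of triangulated polygons or careful bookkeeping within the BGW encoding; all other steps are routine.
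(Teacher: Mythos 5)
Your argument follows the paper's proof almost verbatim: bound $\Di(\mQ_n^\dagger)$ by $(\Di(\mQ_n)+1)\Delta(\mQ_n)$ via Proposition~\ref{pro:roughbound}, apply Proposition~\ref{pro:diamtria} with $\epsilon/2$, and control $\Delta(\mQ_n)$ by exponential degree tails plus a polynomial union bound. The only real difference is that the paper discharges the "main obstacle" you identify by citing the known exponential tail for the root-vertex degree (Gao--Wormald and \cite[Lem.~4.1]{MR2013797}) and then using invariance of $\mQ_n$ under re-rooting at a uniform oriented edge to union-bound over the $6n$ oriented edges, rather than sketching a surgery or BGW-encoding argument and union-bounding over the $n+2$ vertices; both are equivalent once the single-vertex tail is in hand.
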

\begin{proof}
	As shown by~\cite{zbMATH00683269} and~\cite[Lem. 4.1]{MR2013797}, there are constants $C_0, c_0>0$ such that origin $v$ of the root edge of  $\mQ_n$ satisfies
	\begin{align}
		\Pr{d_{\mQ_n}(v) \ge k} \le C_0 \exp(-c_0k)
	\end{align}
for all $n \ge 2$, $ k \ge 0$. The distribution of $\mQ_n$ is invariant under re-rooting at a uniformly selected oriented edge. As $\mQ_n$ has $n+2$ vertices and hence $3n$ edges, it follows that the maximum degree $\Delta(\mQ_n)$ satisfies
	\begin{align*}
		\Pr{\Delta(\mQ_n) \ge k} &\le 6n  \Pr{d_{\mQ_n}(v) \ge  k} \\
		&\le 6C_0n \exp(-c_0 k).
	\end{align*}
Hence there are constants $C_1,c_1>0$ that do not depend on $n$ such that
\begin{align}
	\label{eq:diamcomba}
\Pr{\Delta(\mQ_n) \ge n^{\epsilon/2}} < C_1 \exp(-c_1 n^{\epsilon/2}).
\end{align}
Moreover, by Proposition~\ref{pro:diamtria} there are constants $C_2, c_2>0$ such that
\begin{align}
	\label{eq:diamcombb}
\Pr{\Di(\mQ_n) \ge n^{1/4 + \epsilon/2}} < C_2 \exp(-c_2n^{\epsilon/2}).
\end{align}
Using Proposition~\ref{pro:roughbound} and applying~\eqref{eq:diamcomba} and~\eqref{eq:diamcombb}, it follows that
\begin{align*}
	\Pr{\Di(\mQ_n^\dagger) > n^{1/4 + \epsilon}} &\le \Pr{ \Di(\mQ_n) \Delta(\mQ_n) > n^{1/4 + \epsilon}} \\
	&\le C_3 \exp(-c_3 n^{\epsilon/2})
\end{align*}
for $C_3 = C_1 + C_2$ and $c_3 = \min(c_1,c_2)$.
\end{proof}

Let $\iota>0$ denote a positive random variable. The $\iota$-first-passage percolation metric on a planar map assigns a weight to each edge according to an independent copy of $\iota$. The distance between any two points is then given by the minimal sum of weights along joining paths.

We define $d^\dagger_{\mathrm{fpp}}$ as the $\iota$-first-passage percolation distance on the dual $\mQ_n^\dagger$ of the random simple triangulation $\mQ_n$. It concentrates around a constant multiple of the graph distance on $\mQ_n$:


\begin{lemma}[{\cite[Thm. 3.1]{simplefpp}}]
	\label{le:left}
	Let $\iota>0$ denote a random variable with finite exponential moments that satisfies $\Pr{\iota\ge c}=1$ for some constant $c>0$. Then there exists a constant $c_E>0$ such that
	\begin{align}
		\label{eq:thefpp}
		n^{-1/4} \sup_{\substack{u,v \in \mQ_n\\ u\triangleleft f, v\triangleleft g }} \left | c_E d_{\mQ_n}(u,v) -  d_{\mathrm{fpp}}^\dagger(f,g)\right | \convp 0.
	\end{align}
\end{lemma}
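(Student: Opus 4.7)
The plan is to adapt the strategy of \cite{zbMATH07144469}, which established the analogous concentration for first-passage percolation on the dual of a general (type I) random triangulation, to the simple setting. I would proceed in three main steps.

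First, I would identify the constant $c_E$ via a subadditive ergodic argument on the uniform infinite simple planar triangulation (UISPT) arising as the Benjamini--Schramm local limit of $\mQ_n$ rooted at a uniform oriented edge. A peeling-by-layers exploration of the UISPT generates a stationary and ergodic sequence of hulls, so applying Kingman's subadditive ergodic theorem to the FPP passage times across layers yields a deterministic constant $c_E>0$ such that, for the root face and any face at primal graph distance $r$ from the root, the FPP distance divided by $r$ converges almost surely to $c_E$. The key input is concentration of hull perimeters and volumes obtained from the peeling process, which controls how FPP passage times between layers accumulate.

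Second, for a fixed pair of vertices $u,v \in \mQ_n$ at graph distance of order $n^{1/4}$, I would establish concentration of $d_{\mathrm{fpp}}^\dagger(f,g)$ around its mean at a scale $o(n^{1/4})$. Since $\iota$ is bounded below by $c>0$ and has finite exponential moments, the FPP distance depends on each weight $\iota_e$ in a 1-Lipschitz way up to truncation of outliers; a bounded-differences inequality of Azuma--Hoeffding type then yields stretched-exponential concentration. Combined with convergence of the mean to $c_E d_{\mQ_n}(u,v)$ via a local comparison with the UISPT from the first step, this handles typical pairs.

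Third, I would transfer the concentration to all pairs simultaneously. For pairs at primal graph distance at most $n^{\delta}$ with $\delta<1/4$, Proposition~\ref{pro:roughbound} together with the degree tail bound used in the proof of Proposition~\ref{pro:produalbound} gives $d_{\mathrm{fpp}}^\dagger(f,g) = O_p(n^{\delta + \epsilon/2})$, which is negligible at scale $n^{1/4}$. For pairs at larger primal distance, the stretched-exponential concentration from step two survives a union bound over the at most $O(n^2)$ pairs, and combining with Proposition~\ref{pro:diamtria} to control the maximal distance yields the uniform $n^{-1/4}$ convergence.

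The main obstacle will be step one: the peeling of simple triangulations is substantially more delicate than for general triangulations because the simplicity constraint forces one to carefully track potential double edges during the exploration; simplicity is not preserved as a property of the peeled region alone but must be controlled in relation to the unexplored map. An alternative route would be to transfer the result from the non-simple case in \cite{zbMATH07144469} by means of a combinatorial correspondence between simple triangulations and decorated general triangulations (for instance through the Tutte-type decomposition along $2$-cycles), and then argue that the decorating pieces contribute only negligible perturbations to FPP distances at scale $n^{1/4}$, using that typical decorations have size $O(1)$ with stretched-exponential tails.
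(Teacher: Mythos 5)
The paper does not prove Lemma~\ref{le:left}; the lemma is imported verbatim as Theorem~3.1 of the companion reference \texttt{simplefpp}, and the only indication the paper gives of the underlying argument is the remark immediately following the lemma that the cited proof is ``by extending the arguments for''~\cite[Thm.~3]{zbMATH07144469}. Your proposed skeleton --- identify $c_E$ via a subadditive/peeling argument on the local limit, establish pointwise concentration of $d^\dagger_{\mathrm{fpp}}$, and union-bound over all face pairs using the a priori diameter controls of Propositions~\ref{pro:diamtria} and~\ref{pro:roughbound} --- is precisely the strategy the paper attributes to its source, so your plan is faithful to what the paper records about the proof. Your alternative route (transferring from the unrestricted case via a core decomposition and showing the decorations perturb distances by $o(n^{1/4})$) is a genuinely different avenue, and it resonates with how the rest of the paper treats decorations of the cubic core in Lemma~\ref{le:compdiam}; it is not, however, what the cited reference does.

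One gap worth flagging in your step two: a naive edge-exposure Azuma--Hoeffding argument gives concentration of $d^\dagger_{\mathrm{fpp}}(f,g)$ at scale $\sqrt{\text{(number of edges)}} \asymp \sqrt{n}$, after truncating the weights to $O(\log n)$ using the exponential moment bound, and this is much coarser than the target $o(n^{1/4})$. The $1$-Lipschitz dependence of the FPP distance on each weight is not by itself enough. What is actually needed is a Kesten/Talagrand-type estimate that replaces the total edge count by the (expected) length of a geodesic, which is $O(n^{1/4})$, yielding concentration at scale roughly $n^{1/8}$; and in the Lehericy argument that refinement is intertwined with the peeling-by-layers structure and subadditivity of layer crossing times, not produced by a standalone bounded-differences step. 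If you present your step one as merely identifying the constant $c_E$ and then lean on a separate Azuma step in step two for concentration, the argument as written does not close at scale $n^{1/4}$. The comparison of the finite map to the infinite local limit (``convergence of the mean'') is also the bulk of the work rather than a footnote, since it is exactly where boundary effects and the propagation of peeling estimates from the UIPT back to $\mQ_n$ must be controlled.
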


Such a statement was proven in~\cite[Thm. 3]{zbMATH07144469} for uniform unrestricted triangulations with $\iota$-weights following an exponential distribution, and $\iota$-weights that are constant, so that $d_{\mathrm{fpp}}^\dagger$ corresponds to a multiple of the dual map distance. The proof of this result in~\cite{simplefpp} is by extending the arguments for~\cite[Thm. 3]{zbMATH07144469}.

\subsection{$3$-connected cubic planar graphs}

 Restricting the dual map construction to simple triangulations with at least $4$ vertices yields a bijection between such objects and  $3$-connected cubic planar maps. By Whitney's theorem, any $3$-connected  planar graph has precisely two embeddings in the plane, and one is the mirror image of the other. Thus, there is a two-to-one correspondence between $3$-connected cubic planar maps and $3$-connected cubic planar graphs with an oriented root edge.

We let
\begin{align}
	\cQ(z) = \sum_{n \ge 1} q_n z^n
\end{align}
denote the generating series for simple triangulations, with $q_n$  denoting the number of simple triangulations with $n+2$ vertices (and hence $2n$ faces and $3n$ edges). Specifically, as determined by Tutte~\cite{TUTTE1973437, zbMATH03169204}, 
\begin{align}
	\label{eq:enumsimple}
	q_n = \frac{\sqrt{6}}{32 \sqrt{\pi}} n^{-5/2} \left( \frac{27}{256} \right)^{-n} \left(1 + O\left(\frac{1}{n}\right) \right). 
\end{align}
It follows that
\begin{align}
	\label{eq:mq}
	\cM(x,y) = \frac{1}{2}( \cQ(x^2y^3) - x^2y^3 )
\end{align}
is the  generating series for  $3$-connected cubic planar graphs with an oriented root edge, with $x$ marking vertices and $y$ marking edges. For any even integer $n \ge 4$ we let $\mM_n$ denote the uniform $n$-vertex $3$-connected cubic planar graph. Thus, by Whitney's theorem,  $\mM_n$ is distributed like the dual of the random simple triangulation $\mQ_{n/2}$. Hence, Proposition~\ref{pro:produalbound} readily yields:
\begin{corollary}
	\label{cor:3condeviation}
	For any $\epsilon>0$ there are constants $C,c>0$ such that for all integers $n \ge 2$
	\[
	\Pr{\Di(\mM_n) > n^{1/4 + \epsilon}} < C \exp(-cn^{\epsilon/2}).
	\]
\end{corollary}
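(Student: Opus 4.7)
The plan is to reduce the statement directly to Proposition~\ref{pro:produalbound} using the distributional identification of $\mM_n$ with $\mQ_{n/2}^\dagger$ recalled immediately above the corollary. The key structural input is Whitney's theorem together with the dual map construction: these yield a two-to-one correspondence between simple triangulations with $n/2+2$ vertices (which have $2 \cdot (n/2) = n$ faces, hence duals with $n$ vertices) and $3$-connected cubic planar graphs with $n$ vertices equipped with an oriented root edge. Since the diameter is an invariant of the underlying graph and does not depend on rooting, orientation, or choice of embedding, this correspondence transfers to the equality in distribution $\Di(\mM_n) \eqdist \Di(\mQ_{n/2}^\dagger)$.

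Given this identification, the remaining step is a direct application of Proposition~\ref{pro:produalbound} with $n$ replaced by $n/2$. Fixing $\epsilon > 0$, that proposition provides constants $C_0, c_0 > 0$ such that
\[
\Pr{\Di(\mQ_{n/2}^\dagger) > (n/2)^{1/4+\epsilon}} < C_0 \exp\bigl(-c_0 (n/2)^{\epsilon/2}\bigr)
\]
for every even $n \ge 4$. Since $(n/2)^{1/4+\epsilon} \le n^{1/4+\epsilon}$ the event on the left only grows after replacing the threshold by $n^{1/4+\epsilon}$, while $(n/2)^{\epsilon/2} = 2^{-\epsilon/2} n^{\epsilon/2}$ allows us to absorb the factor $2^{-\epsilon/2}$ into a new constant $c > 0$. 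Any finitely many small values of $n$ can be accommodated by enlarging the leading constant $C$, which produces the desired bound
\[
\Pr{\Di(\mM_n) > n^{1/4+\epsilon}} < C \exp(-c n^{\epsilon/2}).
\]

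There is no real obstacle here: the entire combinatorial and probabilistic content is already packaged into Proposition~\ref{pro:produalbound} and the dual map/Whitney bijection. The only thing to verify carefully is the bookkeeping of the vertex-face count $n \leftrightarrow n/2$ and the harmless rescaling of the exponent's base, both of which are elementary.
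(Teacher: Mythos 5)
Your proof is correct and follows exactly the route the paper intends: Whitney's theorem plus the dual-map construction give the distributional identity $\mM_n \eqdist \mQ_{n/2}^\dagger$, and then Proposition~\ref{pro:produalbound} applied with index $n/2$ gives the bound after the elementary absorption of the factor $2^{-\epsilon/2}$ into the constant $c$. The paper states this corollary as a one-line consequence without an explicit proof, and your write-up simply supplies the routine bookkeeping that the phrase ``readily yields'' elides.
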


 We let $d_{\mM_n}$ denote the graph distance on $\mM_n$ and $\mu_{\mM_n}$ the uniform measure on the set of vertices of $\mM_n$. 

\begin{theorem}
	\label{te:cubic3con}
	There exists a constant $c^\dagger>0$ such that
	\begin{align*}
		\left(\mM_n, \frac{3^{1/4}}{2 c^\dagger} n^{-1/4} d_{\mM_n}, {\mu}_{\mM_n}\right) \convdis (\mathbf{M}, d_{\mathbf{M}}, \mu_{\mathbf{M}})
	\end{align*}
	in the Gromov--Hausdorff--Prokhorov sense as $n \in 2\ndN$ tends to infinity.
\end{theorem}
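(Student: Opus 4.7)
The plan is to transport the scaling limit of the simple triangulation (Corollary~\ref{co:coqconv}) through the dual-map bijection to obtain the scaling limit of $\mM_n$. By Whitney's theorem, $\mM_n$ is distributed like the dual $\mQ_{n/2}^\dagger$, so its vertex set can be identified with the face set of $\mQ_{n/2}$ and the uniform measure $\mu_{\mM_n}$ becomes the uniform measure on faces. I apply Lemma~\ref{le:left} with the deterministic weight $\iota\equiv 1$, which makes $d_{\mathrm{fpp}}^\dagger$ coincide with the graph distance $d_{\mM_n}$, yielding a constant $c_E > 0$ such that
\begin{equation}
\label{eq:conc-plan}
n^{-1/4}\sup_{\substack{u \triangleleft f\\ v \triangleleft g}} \bigl|c_E\, d_{\mQ_{n/2}}(u,v) - d_{\mM_n}(f,g)\bigr| \convp 0.
\end{equation}
The constant $c^\dagger$ of the theorem is defined as $c^\dagger := c_E/\sqrt{2}$, chosen so that the asymptotic scalings on both sides match below.

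I then set up a correspondence and a coupling to apply Proposition~\ref{pro:ghpcoupling}. Define the relation
\[
R_n := \{(f,u) \in \mM_n \times \mQ_{n/2} \mid u \triangleleft f\},
\]
which is a correspondence since every face is incident to three vertices and every vertex to at least three faces. For each face $f$ of $\mQ_{n/2}$ independently, pick one of its three incident vertices $\pi_n(f)$ uniformly at random, and let $\nu_n$ be the push-forward of $\mu_{\mM_n}$ along $f \mapsto (f,\pi_n(f))$. Since each vertex $u$ is incident to $d_{\mQ_{n/2}}(u)$ faces and $\mQ_{n/2}$ has $n$ faces with degree sum $3n$, the marginal of $\nu_n$ on $\mQ_{n/2}$ equals the stationary measure $\tilde\mu_{\mQ_{n/2}}$. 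By construction $\nu_n(R_n)=1$.

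Finally, set $a_n := \frac{3^{1/4}}{2 c^\dagger} n^{-1/4}$ and $b_n := (3/8)^{1/4}(n/2)^{-1/4}$; a direct calculation gives $a_n c_E = b_n$, hence the distortion of $R_n$ under the scalings $(a_n d_{\mM_n}, b_n d_{\mQ_{n/2}})$ is bounded by $a_n \cdot o_p(n^{1/4}) = o_p(1)$ by~\eqref{eq:conc-plan}. Proposition~\ref{pro:ghpcoupling} then yields
\[
\dghp\bigl((\mM_n, a_n d_{\mM_n}, \mu_{\mM_n}),(\mQ_{n/2}, b_n d_{\mQ_{n/2}}, \tilde\mu_{\mQ_{n/2}})\bigr) \convp 0,
\]
and Corollary~\ref{co:coqconv} completes the proof. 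The main technical ingredient is the concentration estimate~\eqref{eq:conc-plan}, but this is exactly what Lemma~\ref{le:left} provides for unit weights; the remaining work is a routine marginal computation for $\nu_n$ and careful tracking of the multiplicative constants so that the scaling $\frac{3^{1/4}}{2c^\dagger} n^{-1/4}$ appears on the $\mM_n$-side.
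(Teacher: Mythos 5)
Your proof is correct and follows essentially the same route as the paper: apply the first-passage-percolation concentration of Lemma~\ref{le:left} with unit weights to relate $d_{\mQ_{n/2}}$ and $d_{\mQ_{n/2}^\dagger}$, set up the incidence correspondence $\triangleleft$ together with the face-uniform/degree-biased coupling, invoke Proposition~\ref{pro:ghpcoupling}, and conclude from Corollary~\ref{co:coqconv}. You are in fact a bit more careful with the bookkeeping: $(3/8)^{1/4}(n/2)^{-1/4}=3^{1/4}n^{-1/4}/\sqrt{2}$, so taking $c^\dagger=c_E/\sqrt{2}$ makes the two scalings align exactly, whereas the paper writes $3^{1/4}/2$ at the corresponding step (this is harmless since $c^\dagger$ is unspecified in the theorem, but your version of the arithmetic is the clean one).
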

\begin{proof}
By Lemma~\ref{le:left} there exists a constant $c^\dagger>0$ with
\begin{align}
	\label{eq:opnquarter}
 	n^{-1/4} \sup_{\substack{u,v \in \mQ_n\\ u\triangleleft f, v\triangleleft g }} \left | c^\dagger d_{\mQ_{n/2}}(u,v) -  d_{\mQ_{n/2}^\dagger}(f,g)\right | \convp 0.
\end{align}
The relation $\triangleleft$ is yields a correspondence between the vertex set of the rescaled triangulation \[
(\mQ_{n/2}, (3^{1/4}/2) n^{-1/4} d_{\mQ_{n/2}}, \tilde{\mu}_{\mQ_{n/2}})
\]
and the vertex set of the rescaled dual 
\[
(\mQ_{n/2}^\dagger, (3^{1/4}/(2c^\dagger)) n^{-1/4} d_{\mQ_{n/2}^\dagger}, {\mu}_{\mQ_{n/2}^\dagger}),\]
with ${\mu}_{\mQ_{n/2}^\dagger}$ denoting the uniform distribution on $\mQ_{n/2}^\dagger$
. By~\eqref{eq:opnquarter}, it follows that
\begin{align}
	\mathrm{dis}(R) = o_p(1).
\end{align}
We select a point $f \in \mQ_{n/2}^\dagger$ uniformly at random, and then select a vertex $u$ with $u \triangleleft f$ uniformly at random. This way, $u$ assumes a vertex of  $\mQ_{n/2}$ with probability proportional to the number of faces on whose boundary it lies. Since $\mQ_{n/2}$ is a simple triangulation, this  is precisely the degree of $u$. Hence $u$ follows the stationary distribution $ \tilde{\mu}_{\mQ_{n/2}}$. Thus, the joint law of $u$ and $f$ is a coupling \[
\nu \in M\left( \tilde{\mu}_{\mQ_{n/2}}, \mu_{\mQ_{n/2}^\dagger}\right)
\]
 between $\tilde{\mu}_{\mQ_{n/2}}$ and  $\mu_{\mQ_{n/2}^\dagger}$ . By construction,\[
 \nu(R) = 1.
 \]
Using Proposition~\ref{pro:ghpcoupling}, it follows that
\begin{align}
	\label{eq:tmp1234}
d_{\mathrm{GHP}}\left( \left(\mQ_{n/2}, \frac{3^{1/4}}{2} n^{-1/4} d_{\mQ_{n/2}}, \tilde{\mu}_{\mQ_{n/2}}\right) , \left(\mQ_{n/2}^\dagger, \frac{3^{1/4}}{2c^\dagger} n^{-1/4} d_{\mQ_{n/2}^\dagger}, {\mu}_{\mQ_{n/2}^\dagger}\right)\right) \convp 0.
\end{align}
By Corollary~\ref{co:coqconv}, we know that
\begin{align}
	\left(\mQ_{n/2}, \frac{3^{1/4}}{2} n^{-1/4} d_{\mQ_{n/2}}, \tilde{\mu}_{\mQ_{n/2}}\right) \convdis (\mathbf{M}, d_{\mathbf{M}}, \mu_{\mathbf{M}})
\end{align}
$n \in 2\ndN$ tends to infinity. Combining this with~\eqref{eq:tmp1234}, we obtain
\begin{align}
		\left(\mQ_{n/2}^\dagger, \frac{3^{1/4}}{2c^\dagger} n^{-1/4} d_{\mQ_{n/2}^\dagger}, {\mu}_{\mQ_{n/2}^\dagger}\right) \convdis (\mathbf{M}, d_{\mathbf{M}}, \mu_{\mathbf{M}}).
\end{align}
As discussed above, Whitney's theorem ensures that   $\mM_n$ is distributed like the dual of the random simple triangulation $\mQ_{n/2}$. Hence this completes the proof.
\end{proof}

Given a random link-weight $\iota>0$, we let $d_{\mathrm{fpp}}$ denote the corresponding first-passage percolation distance on $\mM_n$.
Applying Lemma~\ref{le:left} twice immediately yields the following concentration result:
\begin{lemma}[{\cite[Thm. 1.2]{simplefpp}}]
	\label{le:fpp3con}
	Let $\iota>0$ denote a 	random positive integer with finite exponential moments. There exists a constant $c_{\mathrm{fpp}}>0$ such that
	\begin{align}
		\label{eq:thefpponcore}
		n^{-1/4} \sup_{u,v \in \mM_n} \left |c_{\mathrm{fpp}} d_{\mM_n}(u,v) -  d_{\mathrm{fpp}}(u,v)\right | \convp 0
	\end{align}
as $n \in 2\ndN$ tends to infinity.
\end{lemma}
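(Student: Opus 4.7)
The plan is to deduce the claim by applying Lemma~\ref{le:left} twice and comparing the results via the triangle inequality. The bridge is provided by Whitney's theorem, which (as already used in Theorem~\ref{te:cubic3con}) gives $\mM_n \eqdist \mQ_{n/2}^\dagger$; under this identification the graph distance $d_{\mM_n}$ corresponds to $d_{\mQ_{n/2}^\dagger}$ and the FPP metric $d_{\mathrm{fpp}}$ on $\mM_n$ corresponds to $d^\dagger_{\mathrm{fpp}}$ on the dual. Thus it suffices to establish the stated concentration for $\mQ_m^\dagger$ with $m = n/2$, with both metrics reduced to quantities that Lemma~\ref{le:left} directly addresses.

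First, applying Lemma~\ref{le:left} to the given random positive integer $\iota$ — which automatically satisfies $\iota \ge 1$ a.s.\ and has finite exponential moments by hypothesis — produces a constant $c_E > 0$ with
\[
	m^{-1/4} \sup_{\substack{u,v \in \mQ_m \\ u \triangleleft f,\ v \triangleleft g}} \left| c_E d_{\mQ_m}(u,v) - d^\dagger_{\mathrm{fpp}}(f,g) \right| \convp 0.
\]
Second, applying Lemma~\ref{le:left} to the deterministic unit weight $\iota \equiv 1$ (a bounded positive integer, therefore satisfying all hypotheses) yields a constant $c_E' > 0$ such that
\[
	m^{-1/4} \sup_{\substack{u,v \in \mQ_m \\ u \triangleleft f,\ v \triangleleft g}} \left| c_E' d_{\mQ_m}(u,v) - d_{\mQ_m^\dagger}(f,g) \right| \convp 0,
\]
since first-passage percolation with unit link weights coincides with the graph distance on $\mQ_m^\dagger$.

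Combining the two approximations by the triangle inequality cancels $d_{\mQ_m}$, and since for any $f,g \in \mQ_m^\dagger$ one is free to pick incident primal vertices $u \triangleleft f$ and $v \triangleleft g$, we obtain
\[
	m^{-1/4} \sup_{f,g \in \mQ_m^\dagger} \left| \frac{c_E}{c_E'} d_{\mQ_m^\dagger}(f,g) - d^\dagger_{\mathrm{fpp}}(f,g) \right| \convp 0.
\]
Setting $c_{\mathrm{fpp}} := c_E / c_E'$, substituting $m = n/2$ (the factor $2^{1/4}$ is absorbed into the $\convp 0$), and transferring via $\mM_n \eqdist \mQ_{n/2}^\dagger$ yields the conclusion.

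There is no genuine obstacle here — the result is stated as an immediate consequence of Lemma~\ref{le:left} in the excerpt, and all substantive content resides in that lemma. The only bookkeeping points to verify are that $\iota \equiv 1$ lies in the scope of Lemma~\ref{le:left}, and that the sup over incident pairs in both bounds allows one to eliminate $d_{\mQ_m}$ uniformly in $f,g$ after the triangle inequality.
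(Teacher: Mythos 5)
Your proposal is correct and is exactly the argument the paper intends: the text preceding Lemma~\ref{le:fpp3con} says precisely that it follows by ``applying Lemma~\ref{le:left} twice,'' and your two instantiations (once with the given $\iota$, once with $\iota\equiv 1$ so that $d_{\mathrm{fpp}}^\dagger$ becomes the dual graph distance) combined via the triangle inequality over incident pairs $u\triangleleft f$, $v\triangleleft g$ is the intended cancellation of $d_{\mQ_m}$. The bookkeeping points you flag --- that $\iota\equiv 1$ satisfies the hypotheses of Lemma~\ref{le:left}, and that one may choose incident primal vertices for arbitrary dual points before taking the supremum --- are indeed the only checks needed, and you handle them correctly.
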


\section{The scaling limit of cubic planar graphs}
\label{sec:cuplag}

\subsection{Network decomposition}
\label{sec:netdec}

We recall the network decomposition~\cite{zbMATH05122852, zbMATH07213288} of cubic planar graphs. 
\begin{definition}
A \emph{(cubic) network} is a connected planar cubic multi-graph $N$ with an oriented root edge $e$ such that the graph $N- e$ obtained by removing $e$ is simple.
\end{definition}

The endpoints of the root edge are the \emph{poles} of the network. The class $\cN$ of networks has an exponential generating series $\cN(x)$. For all  $n \ge 0$ the $n$th coefficient $[x^n]\cN(x)$ is given by $1/n!$ times the number of networks with vertices labelled from $1$ to $n$. This coefficient is equal to zero unless $n \in \{2i \mid i \ge 2\}$.

\begin{figure}[t]
	\centering
	\begin{minipage}{\textwidth}
		\centering
		\includegraphics[scale=0.5]{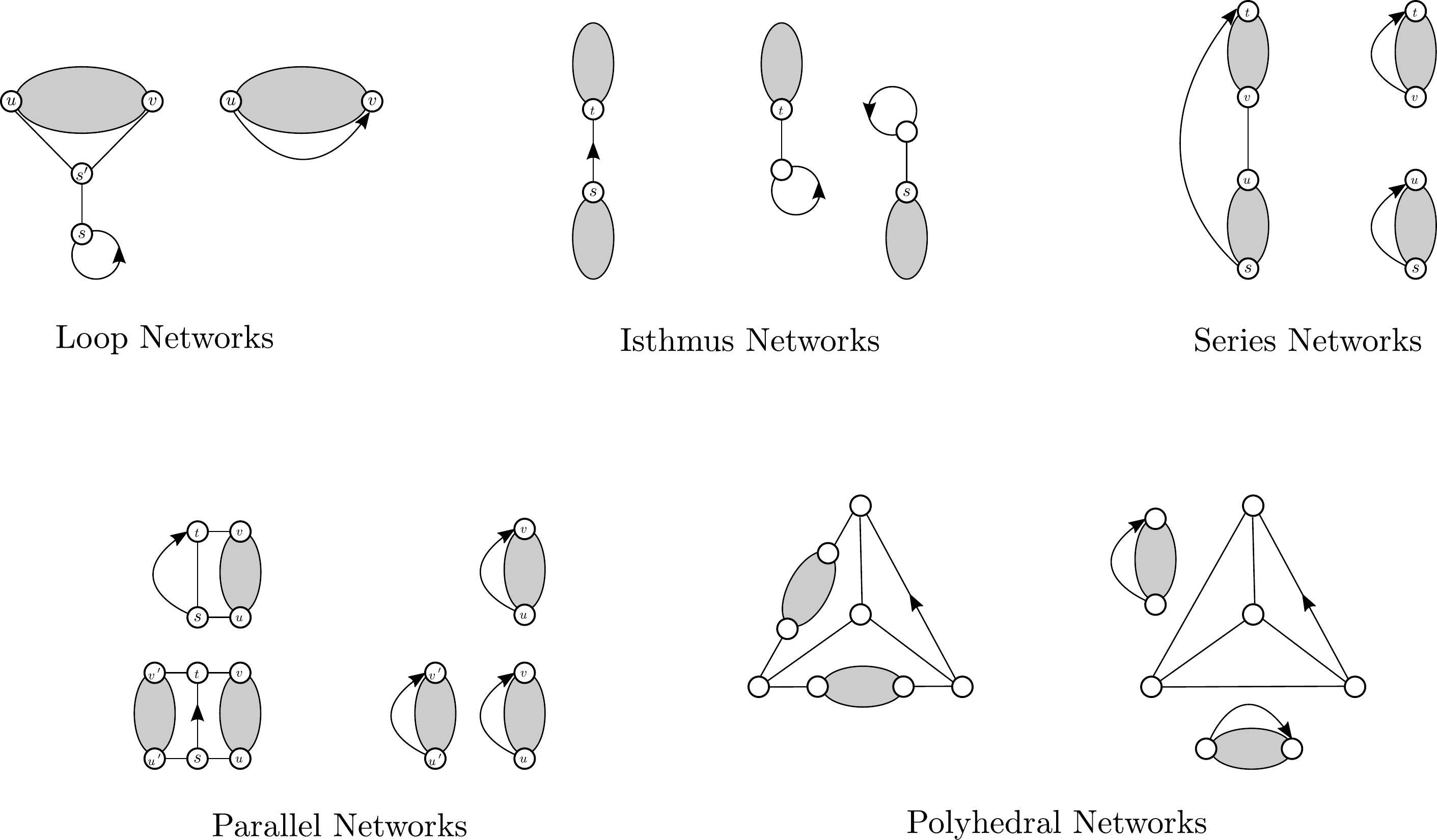}
		\caption{Decomposition of networks.}
		\label{fi:total}
	\end{minipage}
\end{figure}

As argued in~\cite{zbMATH05122852}, networks may be divided into five subclasses:
\begin{enumerate}[\qquad 1)]
	\item  $\cL$ (Loop). The root edge $e$ is a loop.
	\item $\mathcal{I}$ (Isthmus). The root edge $e$ is an isthmus (also called a bridge). That is, the network $N - e$ obtained by removing $e$ is disconnected.
	\item $\mathcal{S}$ (Series). $N - e$ is connected, but contains a bridge that separates the endpoints of $e$.
	\item $\mathcal{P}$ (Parallel). $N - e$ is connected, contains no bridge that would separate the endpoints of $e$, and either $e$ is part of a double edge in $N$ or deleting the endpoints of $e$ disconnects $N$.
	\item $\mathcal{H}$ (Polyhedral). $N$ is obtained from a $3$-connected network by possibly replacing each non-root edge with a non-isthmus network.
\end{enumerate}

We define the subclass $\cD$ of non-isthmus networks so that
\begin{align}
	\label{eq:n}
	\cN(x) &= \cD(x) + \cI(x), \\
	\label{eq:d}
	\cD(x) &= \cL(x) + \cS(x) + \cP(x) + \cH(x).
\end{align}
As illustrated in Figure~\ref{fi:total}, each of the five classes may be decomposed in terms of the other classes and itself. yielding, as described in~\cite{zbMATH05122852}, the equation system
\begin{align}
	\label{eq:l}
	\cL(x) &= \frac{x^2}{2}(\cN(x) - \cL(x)), \\
	\label{eq:i}
	\cI(x) &= \frac{\cL^2}{x^2}, \\
	\label{eq:s}
	\cS(x) &= (\cN(x)- \cS(x) - \cI(x))(\cN(x) - \cI(x)), \\
	\label{eq:p}
	\cP(x) &= x^2 \cD(x) + \frac{x^2}{2}\cD(x)^2, \\
	\label{eq:h}
	\cH(x) &= \frac{\cM(x, 1 + \cD(x))}{1 + \cD(x)}.
\end{align}

We briefly recall the details of these decompositions.
\subsubsection{Loop networks}

The network $N$ is a loop network, if its poles $s$ and $t$ are identical. As illustrated in Figure~\ref{fi:total}, in this case the vertex $s=t$ is adjacent to a unique vertex $s'$, which is adjacent to two vertices $u \ne v$. Deleting $s, s'$  and adding a root edge from $u$ to $v$ or from $v$ to $u$ yields a non-loop network. 

\subsubsection{Isthmus networks}

As depicted in Figure~\ref{fi:total}, an isthmus network may be decomposed into an ordered pair of loop networks, each having an additional vertex. 

\subsubsection{Series networks}

Given a series network $N$ with root edge $e$, the graph $N -e$ contains bridges that separate the poles $s$ and $t$. Among those bridges we may select the one that is closest to $s$. Deleting it creates two components, one containing $s$ and an end $u$ of the bridge, the other containing $t$ and the other end $v$ of the bridge. Thus, adding directed edges $su$ and $vt$ creates two networks $N_1$ and $N_2$. Neither can be an isthmus network. Since we chose the bridge closest to $u$, the network~$N_1$ cannot be a series network.

\subsubsection{Parallel networks}

We distinguish two types of parallel networks. In the first type, the root is a double edge. Hence the poles are adjacent to vertices $u$ and $v$. Deleting $s$ and $t$ and adding a root edge from $u$ to $v$ yields a non-isthmus network. In the second type of parallel networks the root is not a double edge. As illustrated in Figure~\ref{fi:total}, such networks correspond to unordered pairs of non-isthmus networks.

\subsubsection{Polyhedral networks}

A polyhedral network is constructed  from a $3$-connected cubic planar graph $M$ with an oriented root edge by inserting components $D(1), D(2), \ldots$ at its non-root edges. To do so, we order and direct the non-root edges of $M$ in a canonical way. Each component is either a non-isthmus network or a placeholder value. For each list index $i$ the $i$th edge in the list is either left as is if $D(i)$ is equal to the placeholder value, or otherwise replaced by $D(i)$ as illustrated in Figure~\ref{fi:total}.

\subsection{Distances between poles of networks}

In a polyhedral network, inserting a network  component $D$ at an edge $e$ of the $3$-connected core means deleting the edge $e$ and the root-edge of $D$, and adding two edges to connect the poles of $D$ to the former endpoints of $e$. See Figure~\ref{fi:total} for an illustration.

For this reason, we need to understand the graph distance between the poles of a non-isthmus network after removing the root edge. The goal in this section is to argue that in a ``free'' non-isthmus network, this distance has finite exponential moments.

We start by defining such a random network model. As shown in~\cite[Proof of Thm. 1]{zbMATH07213288} the  generating series $\cD(x)$ has an algebraic radius of convergence $\rho>0$ satisfying
\begin{align*}
	\rho &=       0.319224606195452700761429068280\ldots, \\
	\cD(\rho) &=  0.011525944379127380775581944095\ldots,
\end{align*}
and
\begin{align}
	\label{eq:rod}
	\rho^2(1 + \cD(\rho))^3 = \frac{27}{256}.
\end{align}
The points $\{\rho, - \rho\}$ are the only singularities on the circle $\{z \in \ndC \mid |z|=\rho\}$, and as $x \to \rho$
\begin{align}
	\label{eq:singhere}
	\cD(x) = D_0 + D_2 \left(1 - \frac{x}{\rho}\right) + D_{3} \left(1 - \frac{x}{\rho}\right)^{3/2} + O\left( \left(1 - \frac{x}{\rho}\right)^2 \right)
\end{align}
for  constants $D_0 = \cD(\rho)$, $D_2 = - \rho \cD'(\rho)$, $D_3>0$. As shown in~\cite[Proof of Thm.1]{zbMATH07213288},  each class $\cF \in \{\cL,  \cS, \cP, \cH\}$ may be expressed in terms of $\cD$, in particular
\[
	\cL(x) = 1 + \frac{x^2}{2} - \sqrt{\frac{x^2}{4} + 1 -x^2(\cD(x) -1)}
\] 
and
\[
	\cP(x) = \frac{\cD^2(x)}{1+ \cD(x)}.
\]
The constants $\rho$ and $\cD(\rho)$ may be determined algebraically by solving  the resulting equation 
\begin{align}
	\cD = 1 + \frac{x^2}{2} - \sqrt{\frac{x^2}{4} + 1 -x^2(\cD -1)} + \frac{\cD^2}{1+ \cD} + x^2 \cD + \frac{x^2}{2}\cD^2 +  \frac{\cM(x, 1 + \cD)}{1 + \cD}.
\end{align}
 simultaneously with Equation~\eqref{eq:rod} for $(\rho, \cD(\rho))$. The singularity expansion~\eqref{eq:singhere} implies
\begin{align}
	\label{eq:asymphere}
	[x^n] \cD(x) \sim c_{\cD} n^{-5/2} \rho^{-n}, \qquad c_{\cD} = \frac{3 D_3}{2 \sqrt{\pi}}
\end{align}
as $n \in 2\ndN$ tends to infinity. Furthermore, since any class $\cF \in \{\cL, \cI, \cS, \cP, \cH\}$ may be expressed in terms of $\cD$, it follows that there exists  $c_{\cF}>0$ with
\begin{align}
	\label{eq:genasymphere}
	[x^n] \cF(x) \sim c_{\cF} n^{-5/2} \rho^{-n}.
\end{align}
Of course, $c_\cF$ depends on the class under consideration.
 Let $Y \ge 0$ denote a random non-negative integer with probability generating function
\begin{align}
	\Ex{x^{Y}} = \frac{1 + \cD(\rho x)}{1 + \cD(\rho)}.
\end{align}
Hence
\begin{align}
	\label{eq:yasmp}
	\Pr{Y=n} \sim \frac{c_{\cD}}{1 + \cD(\rho)} n^{-5/2}
\end{align}
as $n \in 2 \ndN$ tends to infinity.

\begin{definition}[Boltzmann  network]
	\label{de:defd}
	We let $\mD$ denote a random network constructed as follows. If $Y>0$, we set $\mD$ to a uniformly selected non-isthmus network with $Y$ labelled vertices. If $Y=0$, we set $\mD$ to a placeholder value  representing that inserting $\mD$ at an edge of a cubic planar graph leaves the graph unchanged. We refer to $\mD$ as the Boltzmann network. 
\end{definition}

Inserting a copy of $\mD$ at an edge $e$ may increase the length of a shortest path between the endpoints of $e$ that is required to stay in this component. We let $\iota$ denote the length of such a shortest path:

\begin{definition}[Link weight]
	\label{de:linkweight}
	We define a random positive integer $\iota>0$ as follows. If $Y>0$, we set $\iota$ to the sum of $2$ plus the graph distance between the poles of $\mD$ after removing the root-edge. If $Y=0$, we set $\iota=1$. We refer to $\iota$ as the link weight.
\end{definition}

The number of vertices $Y$ of the Boltzmann network $\mD$ has a density that varies regularly with index $-5/2$ by Equation~\eqref{eq:yasmp}. However, the distance $\iota$ has a light tail:

\begin{lemma}
	\label{le:linkw}
	The link weight $\iota$ has finite exponential moments.
\end{lemma}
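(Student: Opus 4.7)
The strategy is to derive a self-consistent functional equation for $\Phi(y) := \mathbb{E}[y^\iota]$ from the network decomposition of Section~\ref{sec:netdec} and then verify via an analytic implicit function argument that $\Phi$ extends past $y = 1$. First I would condition on the type in $\cD = \cL \cup \cS \cup \cP \cup \cH$ and translate each decomposition rule into an identity for $\iota = 2 + \delta(\mD)$, where $\delta(\mD) := d_{\mD - e}(s,t)$: loops give $\iota = 2$; series give $\iota = \iota_1 + \iota_2 - 1$ where $\iota_1, \iota_2$ are independent link weights of the two Boltzmann-distributed sub-networks (by additivity of $\delta$ across the separating bridge); parallel networks give $\iota = 3$ for the double-edge subtype and $\iota = \min(\iota_1, \iota_2)$ for the unordered-pair subtype; polyhedral networks give $\iota = 2 + \delta_{\mathrm{fpp}}(M)$, where $\delta_{\mathrm{fpp}}(M)$ is the first-passage percolation distance between the endpoints of the root edge in $M - e_{\text{root}}$ with iid labels distributed as $\iota$, and $M$ is the $3$-connected cubic planar core.

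The main obstacle is the polyhedral case. Since the Boltzmann weight of a polyhedral network depends only on its vertex count, conditioning on the core size makes $M$ uniform among $3$-connected cubic planar graphs of that size. In any such planar map, the root edge bounds two faces of degrees $F_1, F_2$, each providing a detour of length $F_i - 1$, so $d_{M - e_{\text{root}}}(s, t) \le \min(F_1, F_2) - 1$. Under the dual-map correspondence of Section~3.2, $F_1$ and $F_2$ equal the degrees of the two endpoints of the root edge of the dual uniform simple triangulation, which by the tail bound of~\cite{zbMATH00683269} and~\cite[Lem.~4.1]{MR2013797} (already invoked for Proposition~\ref{pro:produalbound}) satisfy an exponential tail bound \emph{uniformly in the triangulation size}. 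Consequently, the polyhedral contribution to $\Phi(y)$ is bounded by $\mathbb{E}\bigl[\Phi(y)^{\min(F_1,F_2)-1}\bigr]$, which stays finite as long as $\Phi(y)$ lies below the radius of convergence of the moment generating function of $\min(F_1, F_2)$.

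Assembling all four contributions yields an equation $\Phi(y) = G(y, \Phi(y))$ with $G$ analytic at $(1, 1)$ and satisfying $G(1, 1) = 1$. The partial derivative $\partial_z G(1, 1)$ is a weighted average of ``mean edge contributions'' from each class, with Boltzmann weights $\cL(\rho)/\cD(\rho), \cS(\rho)/\cD(\rho), \cP(\rho)/\cD(\rho), \cH(\rho)/\cD(\rho)$; a direct computation using~\eqref{eq:rod} and the decomposition equations~\eqref{eq:l}--\eqref{eq:h} yields the strict inequality $\partial_z G(1, 1) < 1$. The analytic implicit function theorem then provides an analytic extension of $\Phi$ to a complex neighborhood of $y = 1$, giving $\mathbb{E}[y^\iota] < \infty$ for some $y > 1$. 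The hardest step is verifying the strict sub-criticality $\partial_z G(1, 1) < 1$; the intuition is that, despite the polynomial $n^{-5/2}$ tail of $Y$, the pole distance $\delta(\mD)$ does not scale with $|\mD|$ but depends only on a ``thin'' slice near the root, specifically the face-degree detour inside the $3$-connected core.
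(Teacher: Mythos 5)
Your proposal follows the same overall strategy as the paper's proof: derive a functional equation for (an upper bound on) the probability generating function of $\iota$ from the network decomposition, reduce the polyhedral case to a root face-degree detour in the $3$-connected core (which has a uniformly exponential tail), and close via the implicit function theorem at $y=1$. The paper implements this with bivariate generating series $\cD(x,w)$, marking an explicit path between the poles that upper-bounds the geodesic, and then checks $\partial F/\partial r\neq 0$ numerically at $(1,\cD(\rho))$; your probabilistic/PGF framing is an equivalent packaging of the same calculation.

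A few places, however, need to be tightened. First, the parallel case formula should be $\iota = 2 + \min(\iota_1,\iota_2)$, not $\min(\iota_1,\iota_2)$: the two extra edges joining the poles $s,t$ of the parent network to the poles of each inserted component must be counted in $\delta$. Second, because the series, parallel, and polyhedral cases recurse on components drawn from distinct subclasses ($\cL$, $\cP$, $\cH$ versus $\cD$), the ``$\iota_i$ of the sub-networks'' do not all share the distribution of $\iota$; you genuinely need a system of PGFs (one per class) or, as the paper does, careful substitution to collapse to a single implicit equation. Third, since you bound $\delta_{\mathrm{fpp}}(M)$ by a face-degree detour, what you actually obtain is an inequality $\Phi(y)\le G(y,\Phi(y))$, not an equation; to invoke the implicit function theorem you must, as the paper does, explicitly introduce a modified quantity $\iota'\ge\iota$ (the length of the chosen path, not the geodesic) that satisfies the recursion with equality, and then argue $\Phi\le\Phi'$. (Also note $\mathbb{E}[y^{\min(X_1,X_2)}]\le\mathbb{E}[\Phi(y)^{\min(F_1,F_2)-1}]$ is not a general fact; simply dropping the min and using one face is cleaner and sufficient.) Finally, your ``soft'' sub-criticality intuition for $\partial_z G(1,1)<1$ does not substitute for the computation: the paper ultimately evaluates the exact algebraic expression for $a(27/256,v)$ and the derivative of the implicit equation at the known algebraic point $(\rho,\cD(\rho))$, obtaining a nonzero value numerically. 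Your plan implicitly requires the same concrete evaluation; the intuition that $\iota$ ``does not scale with $|\mD|$'' is suggestive but does not by itself rule out criticality.
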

\begin{proof}
We are going to construct a suitable light-tailed upper bound. To this end, we are going to form a bivariate generating series $\cD(x,w)$, and similar bivariate series for subclasses of non-isthmus networks, with $x$ marking vertices and $w$ marking the length of a specific path between the poles that avoids the root-edge. Thus, the marked path length is not necessarily the length of a geodesic avoiding the root-edge, but it is always an upper bound.

We start with parallel networks. If a parallel network is not simple (this case corresponds to the summand $x^2 \cD$ in Equation~\eqref{eq:p}), then removing the poles still have distance $1$ after removing the root-edge. If a parallel network is not simple (the summand $(x^2/2) \cD^2$ in Equation~\eqref{eq:p}), then it consists of two components, and the distance between the poles after removing the root edge is equal to $2$ plus the minimum of the two corresponding distances in the components. However, for an upper bound we may simply bound the minimum by the length of a specified path in one of the components. Hence,
\begin{align}
	\cP(x,w) = w x^2 \cD(x) + w^2(x^2/2) \cD(x)\cD(x,w).
\end{align} 
For loop networks, the two poles are identical. We mark this path of length zero. This results in
\begin{align}
	\cL(x,w) = \cL(x).
\end{align}
In a series network, the distance between the poles after removing the root edge may be bounded by one plus the sum of the specified path lengths in the two components. Hence,
\begin{align}
	\cS(x,w) = w (\cL(x) + \cP(x,w) + \cH(x,w))\cD(x,w).
\end{align}
For polyhedral networks, recall that $\cM(x,y)$ denotes the exponential generating series of $3$-connected cubic planar graphs with an oriented root edge. Recall that by~\eqref{eq:mq} and Whitney's theorem, \[
\cM^*(x,y) := 2\cM(x,y) = \cQ(x^2y^3) - x^2y^3
\] is the generating series for $3$-connected cubic planar maps, with $x$ marking vertices and $y$ marking edges. The two maps corresponding to such a graph are mirror images of each other, in particular the  root face and outer face are reversed. We extend $\cM^*(x,y)$ to $\cM^*(x,y,v)$ so that $v$ marks the degree of the root face. Thus, \[
\cF(x,y,v) := \frac{1}{2yv}\cM^*(x,y,v)
\] is the generating series of $3$-connected cubic planar graphs equipped with an oriented root edge, with $x$ marking vertices, $y$ marking \emph{non-root} edges, and $v$ marking the length of a path between the poles that avoids the root-edge. Hence, in a polyhedral network obtained by blowing up the non-root edges of the $3$-connected core by network components, we can mark a path between the poles given by the concatenation of the marked components inserted into this marked path in the core. Thus, using~\eqref{eq:h}, we obtain
\begin{align}
	\cH(x,w) = \cF\left(x,1 + \cD(x), \frac{w + w^2 \cD(x,w)}{1 + \cD(x)}\right)
\end{align} 
By~\cite{35bd2f3db4e1480c838f45fa2efcec35} (see also~\cite{zbMATH03732085,zbMATH05641325}), for $n, k \ge 1$ 
\[
	a_{n,k} = \frac{2k(4n-1-2k)!}{(n-k)!(3n-k+1)!} \binom{2k+1}{k}
\]
counts the number of $3$-connected cubic planar maps with $3n + 3$ edges and root face degree $k+2$. The generating series 
\begin{align}
a(z,v) &= 1 + \sum_{n, k \ge 1} a_{n,k} z^n v^k \\
&= 1+v x+v (2 v+1) x^2+v \left(5 v^2+5 v+3\right) x^3+\ldots \nonumber
\end{align}
 was determined in~\cite[Eq. (3), (4)]{35bd2f3db4e1480c838f45fa2efcec35} to equal
\[
	\frac{(1 + u(x))^2}{2 u(x) v^2}\left(v + 3u(x)v - (1 + u(x))^2+ (1 + u(x))(1 + u(x) - v)\sqrt{1 - \frac{4 u(x)v}{(1 + u(x))^2}} \right)
\]
with \[
u(x) = x+4 x^2+22 x^3+140 x^4+969 x^5+7084x^6 + \ldots
\] denoting the unique power series with non-negative coefficients satisfying
\[
	u(x) = x(u(x)+1)^4.
\]
Taking into account the shift of indices, we obtain
\[
	\cM^*(x,y,v) = {v^2}{x^2y^3} (a(x^2 y^3, v)-1).
\]
Hence
\begin{align}
	\label{eq:hw}
	\cH(x,w) = \frac{1}{2}{\frac{w + w^2 \cD(x,w)}{(1 + \cD(x))^2}}{x^2(1 + \cD(x))^3} a\left(x^2 (1 + \cD(x))^3, \frac{w + w^2 \cD(x,w)}{1 + \cD(x)}\right).
\end{align}
Summarizing, we obtain
\begin{align}
	\cD(x,w) =& w x^2 \cD(x) + w^2(x^2/2) \cD(x)\cD(x,w) + \cL(x) + \cH(x,w)  \\
		&+w (\cL(x) +  w x^2 \cD(x) + w^2(x^2/2) \cD(x)\cD(x,w) + \cH(x,w))\cD(x,w). \nonumber
\end{align}
The term $\cL(x)$ may be expressed in terms of $\cD(x)$. Indeed, combining Equations~\eqref{eq:n},~\eqref{eq:l}, and~\eqref{eq:i} we obtain
\[
\cL(x) = \frac{x^2}{2}(\cD(x) + \cI(x) - \cL(x)) = \frac{x^2}{2}(\cD(x) + \cL(x)^2 / x^2 - \cL(x)).
\]
Solving for $\cL(x)$ yields
\begin{align}
	\label{eq:lplugme}
		\cL(x)= 1 + \frac{x^2}{2} - \sqrt{\frac{x^2}{4} + 1 -x^2(\cD(x) -1)}.
\end{align}

Our goal is now to show that the series $\cD(\rho,w)$ has radius of convergence strictly larger than $1$. We start by simplifying the term $\cH(\rho,w)$. Using Equation~\eqref{eq:rod}, we obtain
\[
	u(\rho^2(1 +  \cD(\rho))^3) =  u(27/256) = 1/3.
\]
Thus
\[
a(\rho^2(1 +  \cD(\rho))^3, v) = \frac{16 \left((4-3 v)^{3/2}+9 v-8\right)}{27 v^2}
\]
and

\begin{multline}
	\label{eq:hwrho}
	\cH(\rho, w) = \frac{27 w (\cD(\rho,w) w+1)}  {512
	(\cD(\rho)+1)^3} \\   \left(\frac{16 (\cD(\rho)+1)^4
			\left(\left(4-\frac{3 w (\cD(\rho,w)
				w+1)}{(\cD(\rho)+1)^2}\right)^{3/2}+\frac{9 w (\cD(\rho,w)
				w+1)}{(\cD(\rho)+1)^2}-8\right)}{27 w^2 (\cD(\rho,w) w+1)^2}-1\right).
\end{multline}

Setting $x=\rho$ and  plugging Equations~\eqref{eq:lplugme} and~\eqref{eq:hwrho} into Equation~\eqref{eq:hw}  yields an equation of the form
\begin{align}
	0 = F(w, \cD(\rho,w))
\end{align}
for an bivariate function $F(w,r)$, so that $F(w, \cD(\rho,w))$ is an explicit algebraic expression in terms of $w$, $\rho$, $\cD(\rho)$, and $\cD(\rho,w)$. The series $\cD(\rho,w)$ has radius of convergence $\ge 1$,  and it holds that $\cD(\rho,1) = \cD(\rho)$. Inserting the exact expressions for $\rho$ and $\cD(\rho)$ allows to determine that $F$ is bivariate analytic in a neighbourhood of $(1, \cD(\rho))$ and \[
	\frac{\partial F}{\partial r}(\rho, \cD(\rho)) = -0.97491341126701782727720225567\ldots.
\]
As this is non-zero, it follows by the implicit function theorem that $\cD(\rho,w)$ has radius of convergence strictly larger than $1$. 

By construction, the link weight $\iota$ may be stochastically bounded by a random variable $\iota'$ with probability generating series
\[
	\Ex{w^{\iota'}}= \frac{w + w^2\cD(\rho, w)}{1 + \cD(\rho)}
\]
As this series has radius of convergence strictly larger than $1$, it follows that $\iota$ has finite exponential moments. 
\end{proof}

\subsection{Decomposition trees}

Setting $\cM^+(x)	= \cM(x,y) / y$, we may express the specification \eqref{eq:l}--\eqref{eq:h} of 
\[
	\cD(x)	= \cH(x)  + \cL(x) + \cS(x) + \cP(x) 
\] as follows:
\begin{align*}
	\cL(x)					&= \frac{x^2}{2}(\cH(x) + \cS(x) + \cP(x) + \cI(x)) \\
	\cI(x) 				&= \frac{x^2}{4}(\cH(x)  + \cS(x) + \cP(x) + \cI(x))^2 \\
	\cS(x)					&= \sum_{k \ge 2} (\cH(x) + \cL(x) + \cP(x))^k \\
	\cP(x)					&= x^2 (\cH(x)  + \cL(x) + \cS(x) + \cP(x) ) + \frac{x^2}{2}(\cH(x)  + \cL(x) + \cS(x) + \cP(x))^2 \\
	\cH(x)					&= \cM^+(x, 1 +\cH(x)  + \cL(x) + \cS(x) + \cP(x)).
\end{align*}
Here the expression for $\cI(x)$ was obtained by combining the expressions~\eqref{eq:i} and~\eqref{eq:l}. The expression for $\cS(x)$ was obtained by unrolling~\eqref{eq:s}.
Using this specification, we may associate to any network $N$ with $n$ vertices a decomposition tree $\tau(N)$, which is a plane tree with $n$ leaves and additional labels and structures associated to each vertex. The leaves of $\tau(N)$ carry labels that correspond to the vertices of $N$. All inner vertices of $\tau(N)$ carry labels from $\{ \cL, \cI, \cS, \cP, \cH \}$. The definition is recursive and uses the decompositions illustrated in Figure~\ref{fi:total}.
\begin{itemize}
	\item Suppose that $N$ is a loop network. Then the network $N$ decomposes into two vertices $s,s'$ and another network $N'$. To be precise, there are two possible choice for $N'$, which differ only be the direction of the root-edge. We pick an arbitrary direction. We define $\tau(N)$ as the tree rooted at a vertex with label $\cL$ which has two leaf children labelled $s$ and $s'$, and which is linked by an edge to the root of $\tau(N')$.
	\item Suppose that $N$ is an isthmus network. Then it decomposes into two networks $N_1, N_2$ and two vertices $s_1', s_2'$. Again, to be precise, for $N_1$ and for $N_2$ we have actually two choices of networks, differing only by the direction of the root edge. We make an arbitrary choice of orientation. We let $\tau(N)$ be the tree rooted at a vertex with label $\cI$ which has two leaf children labelled $s$ and $s'$, and which linked by an edge to the root of $\tau(N_1)$ and by another edge to the root of $\tau(N_2)$.
	\item Suppose that $N$ is a series network. Then it decomposes into a non-series network $N'$ from $\cH + \cL + \cP$ and another network $N''$ from  $\cH + \cL + \cP + \cS$. If $N''$ is from $\cS$, we can again decompose into two parts, and so on. This process terminates after a finite number of steps. Thus, a series network $N$ consists of a concatenation of a number $k \ge 2$ of networks $N_1, \ldots, N_k$  from $\cH + \cL + \cP$. We let $\tau(N)$ denote the tree with an $\cS$-labelled root vertex that is linked via edges to the roots of $\tau(N_1), \ldots, \tau(N_k)$.
	\item Suppose that $N$ is a parallel network. Then it decomposes into two vertices $s$ and $t$ and either one or two additional networks. We define $\tau(N)$ as the tree with a root vertex labelled $\cP$ that has two leaf children labelled $s$ and $t$ and that is connected via single edges to the roots of the decomposition trees of the one or two network components.
	\item Suppose that $N$ is a polyhedral network. Then it decomposes into a $3$-connected cubic planar graph $M$ with an oriented root edge, and for each non-root edge $e$ of $M$ possibly a network $N_e$. We define the decomposition tree $\tau(N)$ as the tree rooted at a vertex with label $\cH$ that is connected to leaves carrying the vertices of $M$ as labels, and that is connected to the roots of the decomposition tree of the network components. We store additional information at the root vertex, specifically the graph $M$ together with the information which edge of $M$ corresponds to which subtree attached to the root.
\end{itemize}

\begin{remark}
	When decomposing loop networks or isthmus networks  we made arbitrary choices for the orientation of the root edge of the components. These choices only alter the order between siblings in the resulting decomposition trees. Ensuring this is the reason why we unrolled the decomposition of series networks.
\end{remark}

In the decomposition tree of a network we store at each vertex with an $\cH$-label a $3$-connected cubic planar graph with an oriented root edge. These $3$-connected graphs are the \emph{$3$-connected components} of the network. 

Note that on any $3$-connected component we have two different metrics. The  graph metric, and the \emph{subspace metric} with the distances induced from the entire network.

The singularity expansion~\eqref{eq:singhere} (implying $\cD'(\rho)<\infty$) and the  asymptotics~\eqref{eq:asymphere} allow us to apply a general result for height parameters of recursive systems of equations~\cite[Lem. 5.3]{MR2735332}, yielding:
\begin{lemma}
	\label{le:dectreeheight}
	Let $\mD_n$ denote the Boltzmann network $\mD$ conditioned on having $n$ vertices. Then for any $\epsilon>0$ there exist constants $C,c, \delta>0$ such that the height $\He(\tau(\mD_n))$ of the decomposition tree $\tau(\mD_n)$ satisfies
	\[
		\Pr{\He(\tau(\mD_n)) \ge n^{\epsilon}} \le C \exp(-c n^\delta)
	\]
	for all even integers $n \ge 4$.
\end{lemma}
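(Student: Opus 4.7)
The plan is to recognize that the entire statement is a direct application of the general height-parameter bound~\cite[Lem. 5.3]{MR2735332} to the recursive system~\eqref{eq:l}--\eqref{eq:h}. The decomposition tree $\tau(N)$ of a network $N$ is precisely the parse tree encoding how $N$ is assembled via the five decomposition rules, so its height is exactly the recursion depth governed by the cited lemma.

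First I would explicitly identify the combinatorial system to which~\cite[Lem. 5.3]{MR2735332} applies. Writing $\mathbf{y}(x) = (\cL(x), \cI(x), \cS(x), \cP(x), \cH(x))$, the equations~\eqref{eq:l}--\eqref{eq:h} take the form $\mathbf{y}(x) = \Phi(x, \mathbf{y}(x))$ for an analytic functional $\Phi$ whose $\cH$-component uses the series $\cM^+$ of $3$-connected cubic planar graphs. The Boltzmann network conditioned on having $n$ vertices then corresponds to the uniform distribution on parse trees of weight $n$ in this system, and $\He(\tau(\mD_n))$ is precisely the height parameter studied in~\cite{MR2735332}.

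Next I would verify the hypotheses of the cited lemma. The essential inputs are the square-root singularity expansion~\eqref{eq:singhere} common to all classes $\cF \in \{\cL,\cI,\cS,\cP,\cH\}$ at their shared radius $\rho$, the finiteness of the derivative $\cD'(\rho) < \infty$ (read off from the coefficient $D_2$ in~\eqref{eq:singhere}), and the universal coefficient asymptotics $[x^n]\cF(x) \sim c_{\cF} n^{-5/2}\rho^{-n}$ from~\eqref{eq:genasymphere}. The finiteness of the derivative is what distinguishes the present critical but non-degenerate regime from pathological cases, and it is exactly the condition that makes the conclusions of~\cite[Lem. 5.3]{MR2735332} applicable.

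I expect the main obstacle to be a routine bookkeeping one: unfolding the infinite geometric sum appearing implicitly in the series expansion for $\cS(x)$, and checking that the $\cH$-component's substitution into the external series $\cM^+$ fits the analytic framework of~\cite{MR2735332}; the latter is harmless because $\cM^+(x,y)$ is analytic at $(\rho, 1 + \cD(\rho))$ in view of the enumeration~\eqref{eq:enumsimple} and the identity~\eqref{eq:mq} combined with~\eqref{eq:rod}. Once these points are settled, the stretched-exponential bound
\[
	\Pr{\He(\tau(\mD_n)) \ge n^{\epsilon}} \le C \exp(-c n^{\delta})
\]
follows directly from the cited lemma, with $\delta$ depending on $\epsilon$ and the structural constants of the system.
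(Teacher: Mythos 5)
Your proposal is correct and takes essentially the same route as the paper: both reduce the statement to the height bound of~\cite[Lem.~5.3]{MR2735332} applied to the five-variable recursive specification, and both verify the hypotheses via the square-root singularity~\eqref{eq:singhere}, the finiteness of $\cD'(\rho)$, and the coefficient asymptotics~\eqref{eq:genasymphere}. Two small points you gloss over that the paper handles explicitly: the system~\eqref{eq:l}--\eqref{eq:h} is not directly in the required analytic form, since~\eqref{eq:i} reads $\cI = \cL^2/x^2$, and one must first eliminate the division by $x^2$ (the paper substitutes~\eqref{eq:l} to get $\cI = \tfrac{x^2}{4}(\cH+\cS+\cP+\cI)^2$) and also unroll~\eqref{eq:s} as you note, using precisely the rewritten specification given in the ``Decomposition trees'' subsection; and after the cited lemma yields the bound for the uniform $n$-vertex network of each fixed type $\cL,\cS,\cP,\cH$, one must observe that the conditioned Boltzmann network $\mD_n$ is a mixture of these four models to finish.
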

\begin{proof}
	Indeed, we are in the situation of a system of equations
	\[
		\bm{y} = \bm{F}(x,\bm{y})
	\]
	with $\bm{y} = (y_1(x), \ldots, y_r(x))$ for $r = 5$, and $\bm{F}(x,y)$ an $r$-vector of bivariate functions $F_i(x, \bm{y})$ given by
	\begin{align*}
		F_1(x, \bm{y}) &= \frac{x^2}{2}(y_2 + y_3 + y_4 + y_5), \\
		F_2(x, \bm{y}) &= \frac{x^2}{4}(y_2 + y_3 + y_4 + y_5)^2, \\
		F_3(x, \bm{y}) &= \sum_{k \ge 2} (y_1 + y_4 + y_5)^k, \\
		F_4(x, \bm{y}) &= x^2(y_1 + y_3 + y_4 + y_5) + \frac{x^2}{2}(y_1 + y_3 + y_4 + y_5)^2, \\
		F_5(x, \bm{y}) &= \cM^+(x, 1 + y_1 + y_3 + y_4 + y_5).
	\end{align*}	
 	Each $F_i(x, \bm{y})$ is analytic around $(0, \bm{0})$ with non-negative coefficients in the multivariate power series expansion, with $F_i(0, \bm{y})=0$. Clearly at least one of the $F_i$ is non-affine in one of the $y_j$. The dependency graph of $\bm{F}$ (with vertex set $\{1, \ldots, r\}$ and an directed edge from $i$ to $j$ if $\frac{\partial F_j}{\partial y_i} \ne 0$) is strongly connected. The system $\bm{y} = \bm{F}(x,\bm{y})$ critical, as $\cD'(\rho)<\infty$ implies that $\bm{y}'(x)$ converges at $x=\rho$.
	
	If we start with $\bm{y}_0 := \bm{0}$, and recursively set
	\[
		\bm{y}_{h+1}(\bm{y}) := \bm{F}(x, \bm{y}_h)
	\]
	for $h \ge 0$, then the coordinates of $\bm{y}_h$ are precisely the generating function for networks (whose type corresponds to the coordinate) with a decomposition tree of height at most $h$.
	
	By the asymptotic expansion~\eqref{eq:genasymphere}, all requirements of~\cite[Lem. 5.3]{MR2735332} are met, yielding that for each class $\cF \in \{\cL, \cS, \cP, \cH\}$ such a bound holds for uniform $n$-vertex networks from $\cF$. The conditioned Boltzmann network $\mD_n$ is a uniform $n$-vertex $\cD$-network and hence a mixture of these four models. Thus, a bound of this form also holds for $\mD_n$.
\end{proof}

\subsection{Deviation bounds for the diameter}

Let us recall a standard graph-theoretic fact, proved in~\cite{zbMATH03895102}.

\begin{proposition}[{\cite[Cor. 3 of Sec. 3]{zbMATH03895102}}]
	\label{le:deledge}
	Suppose that $k \ge 1$ edges are deleted from a connected graph $G$. If the resulting graph $G'$ is still connected, then its diameter satisfies
	\[
		\Di(G') \le k + (k+1)\Di(G).
	\]
\end{proposition}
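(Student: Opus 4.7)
The plan is to induct on $k$, with the base case $k = 0$ being trivial. The crux is $k = 1$: if $e = \{a, b\}$ is a single non-bridge edge of $G$, then $\Di(G - e) \le 2\Di(G) + 1$.

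For $k = 1$, suppose for contradiction that $d_{G-e}(u, v) \ge 2\Di(G) + 2$ for some vertices $u, v$, and fix a shortest $G - e$-path $p_0 = u, p_1, \ldots, p_L = v$. The midway vertex $p := p_{\Di(G)+1}$ satisfies $d_{G-e}(u, p) = \Di(G)+1 > \Di(G) \ge d_G(u, p)$, so any shortest $G$-path from $u$ to $p$ must traverse $e$. By simplicity this path decomposes as $u \to \cdots \to x \to y \to \cdots \to p$ with $\{x, y\} = \{a, b\}$, the two end-segments lying in $G - e$ and satisfying $d_G(u, x) + d_G(y, p) \le \Di(G) - 1$. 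Applying the same analysis to the shortest $G$-path from $v$ to $p$ produces an analogous decomposition $v \to \cdots \to x' \to y' \to \cdots \to p$ with $d_G(v, x') + d_G(y', p) \le \Di(G) - 1$. Splicing these two decompositions in $G - e$---directly at $a$ or $b$ when the two paths traverse $e$ in the same orientation, or through $p$ when they traverse it in opposite orientations---produces a $G - e$-path from $u$ to $v$ of length at most $2\Di(G) - 2$, contradicting $d_{G-e}(u, v) \ge 2\Di(G) + 2$.

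For general $k$, naive iteration of the $k = 1$ argument loses a factor of two per deletion and thus gives only an exponential-in-$k$ bound. To recover the stated linear-in-$k$ bound $k + (k+1)\Di(G)$, the Bermond--Bollob\'as argument fixes $u, v$, takes a shortest $G'$-path of length $L$ between them, and splits it into $k + 1$ consecutive blocks of length $\Di(G) + 1$. Each block endpoint is at $G'$-distance greater than $\Di(G)$ from $u$, so any shortest $G$-path from $u$ to it must traverse at least one of the $k$ deleted edges. By pigeonhole, two such $G$-paths traverse the same deleted edge, and splicing them at the shared edge produces a $G'$-shortcut of length strictly less than $L$, contradicting the minimality of the original $G'$-path and hence bounding $L$ by $k + (k+1)\Di(G)$.

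The main obstacle is the combinatorial bookkeeping in the general step: the shortcut obtained by pigeonhole may itself involve other deleted edges, and avoiding exponential blowup across such secondary bypasses requires the careful iterative argument carried out in~\cite{zbMATH03895102}, which we invoke as a black box.
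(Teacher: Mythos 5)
The paper does not prove this proposition: it is stated verbatim with a citation to Bermond and Bollob\'as and used downstream as a black box, so there is no in-paper proof to compare against, and your decision to invoke the same reference for the general-$k$ case is consistent with the source. Your $k=1$ argument is correct and self-contained: once $d_{G-e}(u,v) \ge 2\Di(G)+2$, the midpoint $p = p_{\Di(G)+1}$ satisfies $d_{G-e}(u,p) = \Di(G)+1 > \Di(G) \ge d_G(u,p)$ and (using that $p$ lies on a shortest $G-e$ path) likewise $d_{G-e}(v,p) \ge \Di(G)+1 > d_G(v,p)$, so both shortest $G$-paths to $p$ must cross $e$ exactly once, and the two splicing cases each produce a $G-e$ walk from $u$ to $v$ of length at most $2\Di(G)-2$, a contradiction. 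It is worth remarking that everywhere the paper invokes Proposition~\ref{le:deledge} (after Lemma~\ref{le:diamdplus} and after Lemma~\ref{le:compdiam}) it deletes exactly one edge, namely the root edge of a network, so your $k=1$ argument in fact closes every instance the paper actually uses.

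For general $k$, you correctly diagnose that iterating the $k=1$ bound gives $\Di(G') \le 2^k \Di(G) + 2^k - 1$, exponential in $k$, rather than the stated linear bound. The pigeonhole sketch you then offer has exactly the defect you flag yourself: the two $G$-geodesics through a common deleted edge have segments that may cross other deleted edges, so the spliced path need not live in $G'$, and the argument as written does not bound $L$. You are right that closing this requires more care, and your fallback to citing~\cite{zbMATH03895102} as a black box is the same move the paper makes. In short: the $k=1$ proof is a correct addition that suffices for the paper's application, and for the full statement you and the paper agree in deferring to the external reference.
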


This ensures that the following deviation bound for random networks that we are going to prove also hold for the networks obtained by deleting their root edge.

\begin{lemma}
		\label{le:diamdplus}
	For any $\epsilon>0$ there exist constants $C,c, \delta>0$ such that 
	\begin{align*}
	\Pr{\Di(\mD_n) \ge n^{1/4 + \epsilon}} \le C \exp(-c n^\delta)
	\end{align*}
	for all even integers $n \ge 4$.
\end{lemma}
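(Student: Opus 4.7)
The plan is to combine three ingredients, each supplying a very-high-probability event: the height bound on the decomposition tree from Lemma~\ref{le:dectreeheight}, the diameter deviation bound on $3$-connected cubic planar graphs from Corollary~\ref{cor:3condeviation}, and the light exponential tail of the link weight from Lemma~\ref{le:linkw}. The structural point is that $\mD_n$ is assembled from $\tau(\mD_n)$ by inserting a $3$-connected component at each $\cH$-node and small gadgets at the $\cL$, $\cI$, $\cS$, $\cP$ nodes, with each non-root edge of an inserted $3$-connected core possibly ``blown up'' by a non-isthmus subnetwork whose two poles are joined, after removing its root edge, by a path of length at most the corresponding link weight $\iota$. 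Consequently the graph distance in $\mD_n$ between two vertices is controlled by contributions along the path in $\tau(\mD_n)$ between the corresponding leaves.

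Fix $\alpha, \beta > 0$ with $2\alpha + \beta \le \epsilon$. First I would apply Lemma~\ref{le:dectreeheight} with exponent $\alpha$ to obtain $\He(\tau(\mD_n)) \le n^{\alpha}$ outside an event of probability at most $C_1 \exp(-c_1 n^{\delta_1})$. Next, since $\mD_n$ is cubic and so has at most $3n/2$ edges, its decomposition involves at most $O(n)$ independent copies of the link weight $\iota$; the finite exponential moments of $\iota$ from Lemma~\ref{le:linkw} combined with a union bound then yield that \emph{every} link weight is at most $n^{\beta}$ outside an event of probability at most $C_2 n \exp(-c_2 n^{\beta})$. Finally, the decomposition tree carries at most $n$ many $\cH$-nodes, each storing a $3$-connected cubic planar graph on some even number $m_v \le n$ of vertices which, conditionally on $m_v$, is distributed as $\mM_{m_v}$. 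Corollary~\ref{cor:3condeviation} with exponent $\alpha$ together with a union bound gives that every such stored core has graph diameter at most $n^{1/4+\alpha}$ outside an event of probability at most $n \cdot C_3 \exp(-c_3 n^{\alpha/2})$.

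On the intersection of these three very-high-probability events, for any two vertices $u, w$ of $\mD_n$ I would bound $d_{\mD_n}(u,w)$ by following the tree path between the leaves $u$ and $w$ in $\tau(\mD_n)$ and, at each internal node encountered, traversing the corresponding component between its entry and exit points. At an $\cH$-node $v$ this traversal can be realised by following a graph geodesic in $M_v$ and resolving each of its edges into at most $\iota \le n^{\beta}$ steps in $\mD_n$, contributing at most $\Di(M_v) \cdot n^{\beta} \le n^{1/4 + \alpha + \beta}$. At $\cL$, $\cI$, $\cS$, $\cP$ nodes the extra cost is dominated by $O(n^\beta)$ and absorbed into the same bound. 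Summing over the at most $2 \He(\tau(\mD_n)) \le 2 n^{\alpha}$ internal nodes on the tree path yields
\[
	\Di(\mD_n) \;\le\; 2 n^{\alpha} \cdot n^{1/4 + \alpha + \beta} \;\le\; n^{1/4 + \epsilon}
\]
for all sufficiently large $n$, with total failure probability at most $C \exp(-c n^{\delta})$ where $\delta := \min(\delta_1, \alpha/2, \beta) > 0$.

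The principal obstacle I anticipate is the careful bookkeeping that justifies the bound on the contribution of each node. In particular, at an $\cH$-node one must identify the two vertices where the tree path enters and exits the component $M_v$ with genuine vertices of $M_v$ and argue that their distance in $\mD_n$ is bounded by an $\iota$-weighted first-passage path in $M_v$; at $\cS$-nodes one must verify that the series concatenation only adds a bounded number of extra $\iota$'s per traversal. By contrast, once these geometric statements are in place, combining the three tail bounds is routine.
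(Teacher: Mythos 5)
Your outline correctly identifies the three ingredients the paper combines (Lemma~\ref{le:dectreeheight}, Corollary~\ref{cor:3condeviation}, and Lemma~\ref{le:linkw}), and the root-to-leaf traversal bound is indeed how the pieces are assembled in the end. However, there is a genuine gap in how you justify the union bounds.

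The crux of the difficulty, which your proposal does not address, is that $\mD_n$ is the Boltzmann network $\mD$ \emph{conditioned on having exactly $n$ vertices}. Under this conditioning, the $3$-connected components stored at $\cH$-nodes and the pole-distances of the subnetworks inserted at their edges are \emph{not} independent; they are tied together by the constraint that their sizes sum to $n$. So the sentence ``its decomposition involves at most $O(n)$ independent copies of the link weight $\iota$'' is false for $\mD_n$, and likewise the claim that each stored $3$-connected core is conditionally $\mM_{m_v}$-distributed cannot be used directly for a union bound over all of them simultaneously. The paper circumvents this with a marking/rooting argument: it introduces the auxiliary model $\mD_n^\circ$ (a uniform $n$-vertex network with one marked $3$-connected component), for which the marked core and its attached subnetworks factor cleanly conditionally on sizes, establishes the bound there, and transfers it back to $\mD_n$ at the cost of a polynomial factor via the inequality $\mathbb{P}(\mD_n=D)\le (n/4)\,\mathbb{P}(\mD_n^\circ=(D,M))$, which is absorbed by the very-high-probability bounds. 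It uses the same rooting device again for the $\cS$-components. Your proof has no substitute for this step, so the union bounds over components are not justified as written.

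A second, related shortcoming is the treatment of $\cS$-nodes. You classify them with $\cL,\cI,\cP$ as contributing $O(n^\beta)$, but a series decomposition can consist of up to $\Theta(n)$ chained components, which is not bounded by the tree height. The paper handles this separately by observing that the pole-to-pole distance $L(\mS_m)$ of a uniform $m$-vertex $\cS$-network has the same very-high-probability bound $\le n^\epsilon$ (since $\cS$ has the same $n^{-5/2}\rho^{-n}$ asymptotics), and then applying the rooting argument to control all series components at once; this is what makes the ``cross at most $n^\epsilon$ components per $\cS$-node'' step legitimate. Your vague acknowledgement that ``one must verify that the series concatenation only adds a bounded number of extra $\iota$'s'' does not supply this argument, and a bounded number is in fact not what is needed. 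Finally, a smaller point: the conditional pole-distance of an $m$-vertex component is not distributed as $\iota$; passing from ``$\iota$ has exponential tails'' to ``$L(\mD_m)\le m^\epsilon$ with very high probability'' uses the polynomial lower bound on $\mathbb{P}(Y=m)$ from the Boltzmann model, a translation that needs to be made explicit rather than treated as immediate.
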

\begin{proof}
	Recall that an event that depends on $n$ holds with \emph{very high probability}, if the probability for its complement may be bounded by $C \exp(-c n^\delta)$ for some constants $C,c,\delta>0$ that do not depend on $n$.
	
	The network $\mD_n$ is obtained by conditioning the Boltzmann network $\mD$ on having $n$ vertices. That is, we condition on an event that by Equation~\eqref{eq:yasmp} has a polynomial probability 
	$\frac{c_{\cD}}{1 + \cD(\rho)} n^{-5/2}$. By Lemma~\ref{le:linkw}, the distance $\iota$ between the poles of $\mD$ after removing the root-edge has finite exponential moments. Hence for any $\epsilon>0$ the distance $L(\mD_n)$ between the poles of $\mD_n$ after removing the root edge  satisfies
	\begin{align}
		\label{eq:discprec}
	L(\mD_n) \le n^{\epsilon}
\end{align}
with very high probability.
	
	Note that any network has at least one $3$-connected component, since every inner vertex in the decomposition tree without another inner node as child must carry an $\cH$-label. Since $3$-connected components do not overlap, any $n$-vertex network has at most $n/4$ components. We let  $\mD_n^\circ$ be uniformly selected among all $n$-vertex $\cD$-networks with a marked $3$-connected component. Thus, for any network $D$ and any $3$-connected component $M$ of $D$ we have
	\begin{align}
		\label{eq:dumbineq}
		\Pr{\mD_n = D}  \le (n/4)\Pr{\mD^\circ_n = (D,M)}.
	\end{align}

		 Let $X_n$ denote the number of edges of the marked  component $M(\mD_n^\circ)$ of $\mD_n^\circ$. We let $(D_i(\mD_n^\circ))_{2 \le i \le X_n}$ denote the $(1+\cD)$-components inserted at the $X_n - 1$ non-root edges of $M(\mD_n^\circ)$. Thus, the $n$ vertices of $\mD_n^\circ$  are partitioned into the vertices of $M(\mD_n^\circ)$, the vertices of the components $(D_i(\mD_n^\circ))_{2 \le i \le X_n}$ inserted at its non-root vertices, and the remaining vertices of a network $D^*(\mD_n^\circ)$ attached at its root-edge. Let $\epsilon>0$.  The marked component $M(\mD_n^\circ)$ either has less than $n^{1/4 + \epsilon}$ vertices, and then necessarily diameter smaller than $n^{1/4 + \epsilon}$, or it has more than $n^{1/4 + \epsilon}$ vertices and Corollary~\ref{cor:3condeviation} ensures that its diameter is less than $n^{1/4 + \epsilon}$ with very high probability. Hence, regardless of its size, we have
		 \[
		 	\Di(M(\mD_n^\circ)) \le n^{\epsilon}
		 \] with very high probability. Since $\epsilon>0$ is arbitrary, Proposition~\ref{le:deledge} ensures that this is also the case for the diameter of the graph $M(\mD_n^\circ) -e$ obtained by deleting its root edge $e$. That is,
		 \begin{align}
		 	\label{eq:devk1}
		 \Di(M(\mD_n^\circ) - e) \le n^{\epsilon}
		 \end{align}
		 with very high probability. For each $2 \le i \le X_n$ we let $L_i$ denote the distance between the poles of the component $D_i(\mD_n^\circ)$ after removing the root-edge. If $D_i(\mD_n^\circ)$ is equal to the placeholder value indicating that we do not replace the corresponding edge of $M(\mD_n^\circ)$ by anything, we set $L_i = 0$. The diameter $\Di_{\mathrm{subspace}}(M(\mD_n^\circ))$ of $M(\mD_n^\circ)$ with respect to the subspace metric induced from $\mD_n^\circ$ admits the bound
		 \begin{align}
		 	\label{eq:devk2}
		 	\Di_{\mathrm{subspace}}(M(\mD_n^\circ)) \le (2 + \max(L_2, \ldots, L_{X_n}))\Di(M(\mD_n^\circ)-e).
		 \end{align}
		 Conditional on their size and number, the components $(D_i(\mD_n^\circ))_{2 \le i \le X_n}$ are uniform $(1+\cD)$-structures.  Hence, by Inequality~\eqref{eq:discprec}, for each $i$ the distance $L_i$ is smaller than $n^{\epsilon}$  with very high probability. As $X_n \le 3n/2$, it follows using the union bound that 
		 \begin{align}
		 	\label{eq:devk3}
		 2+\max(L_2, \ldots, L_{X_n}) \le n^{\epsilon}
		 \end{align}
		  with very high probability. Combining~\eqref{eq:devk1},~\eqref{eq:devk2}, and ~\eqref{eq:devk3} it follows that the  diameter of $M(\mD_n^\circ)$ with respect to the subspace metric induced from $\mD_n^\circ$ satisfies
		  \begin{align}
		  	\label{eq:disubgraph}
		  	\Di_{\mathrm{subspace}}(M(\mD_n^\circ)) \le n^{1/4 + 2\epsilon}
		  \end{align}
		  with very high probability.

		Let us call a $\cD$-network with a marked $3$-connected component ``bad'', if the vertex set of the marked component has diameter at least $n^{1/4 + \epsilon}$ with respect to the subspace distance. Let us call an unmarked $\cD$-network ``bad'', if it has at least  one $3$-connected component with subspace distance diameter at least  $n^{1/4 + \epsilon}$. For each unmarked  bad network $D$ we make an arbitrary choice of such a $3$-connected component and mark it, forming the marked network $D^\circ$. Using~\eqref{eq:dumbineq}, it follows that
		\begin{align}
			\label{eq:rootingargument}
			\Pr{\mD_n \text{ is ``bad''}} &\le n \sum_{D^\circ} \Pr{\mD^\circ_n = D^\circ} \\
			&\le n \Pr{\mD_n^\circ \text{ is ``bad''}}. \nonumber
		\end{align}
		By Inequality~\eqref{eq:disubgraph}, it follows that for each $\epsilon>0$ it holds that $\mD_n$ is ``good''  with very high probability in the sense that all of its $3$-connected components have subspace diameter at most $n^{1/4 + \epsilon}$.

		By Equations~\eqref{eq:asymphere} and~\eqref{eq:genasymphere} the random $\cD$-network $\mD_n$ is an $\cS$-network with probability tending to a constant $c_{\cS} / c_\cD > 0$. By~\eqref{eq:discprec}, it follows that the distance $L(\mS_n)$ between the poles in a uniform $n$-vertex $\cS$-network $\mS_n$ after removing the root-edge satisfies for each $\epsilon>0$
		\begin{align}
			\label{eq:ldiscprec}
			L(\mS_n) \le n^{\epsilon}
		\end{align}		
		with very high probability. Note that if we decompose $\mS_n$ into a sequence of (possibly more than two) non-series networks, then $L(\mS_n)$ is also an upper bound for the diameter of the collection of poles of these networks with respect to the graph distance in $\mS_n$. 

	The decomposition tree $\tau(\mD_n)$ has $n$ leaves, and any inner vertex has at least two children. Hence $\tau(\mD_n)$ has at most $n$ inner vertices. Thus, $\mD_n$ has at most $n$ ``series components'', corresponding to inner vertices of $\tau(\mD_n)$ with an $\cS$-label. Using~\eqref{eq:ldiscprec}, it follows by the same rooting argument as for~\eqref{eq:rootingargument} that for any $\epsilon>0$ with very high probability the subspace diameter of any collection of vertices corresponding to the collection of poles of the non-series components of an $\cS$-component of $\mD_n$ has diameter at most $n^{\epsilon}$.

	Now, consider the path from the root of $\tau(\mD_n)$ to one of its leaves. As illustrated in Figure~\ref{fi:total}, whenever we pass through an $\cL$, $\cI$, or $\cP$-network, we only have to cross at most $2$ edges in order to get to one of the pole vertices of the next network component. If $\mD_n$ is good as discussed for $\cH$-components and $\cS$-components, then passing through an $\cH$-network to a pole vertex of the next network component requires us to cross at most $n^{1/4 + \epsilon}$ edges, and passing through an $\cS$-network to a pole vertex of the next network component requires us to cross at most $n^{\epsilon}$ components. 	By Lemma~\ref{le:dectreeheight}, we know that 
	\begin{align}
		\label{eq:tauheightineq}
		\tau(\mD_n) \le n^{\epsilon}
	\end{align}
	with very high probability. Thus, with very high probability any vertex in $\mD_n$ may be reached from one of its poles by crossing at most $n^{1/4 + 2 \epsilon}$ edges. As $\epsilon>0$ was arbitrary, it follows that the diameter of $\mD_n$ satisfies
	\[
		\Di(\mD_n) \le n^{1/4 + \epsilon}
	\]
	with very high probability.
\end{proof}

\subsection{Components attached to the $3$-connected core}

The $3$-connected components of a network are the $3$-connected graphs encountered when forming the decomposition tree. We can form the $3$-connected components of a cubic planar graph in the same way by decomposing it, starting from an arbitrarily selected and oriented root edge. The choice does not affect the resulting $3$-connected graphs.

It was shown in~\cite{stufler2022uniform} that in a typical cubic planar graph the largest $3$-connected component has macroscopic size, with Airy-type fluctuations:

\begin{lemma}[{\cite[Thm. 1.2]{stufler2022uniform}}]
	\label{le:main2}
	Let $V_n$ denote the number of vertices in the largest $3$-connected component of the uniform random $n$-vertex cubic planar graph $\mC_n$. Let
	\[
	h(t) = \frac{1}{\pi t} \sum_{n \ge 1} (-t 3^{2/3})^n \frac{\Gamma(2n/3 +1)}{n!} \sin(-2n\pi/3), \qquad t \in \ndR
	\]
	denote the density of the map type Airy distribution. There are algebraic constants \begin{align*}
		\kappa &= 0.850853090058314333870385348879612617197477\ldots \\
		c_v &=    1.205660773457703954344217302817493214574105\ldots 
	\end{align*}
	such that for any constant $M>0$
	\begin{align*}
		\Pr{V_n = \kappa n + t n^{2/3}} = n^{-2/3} (2 c_v h(c_v t) + o(1))
	\end{align*}
	uniformly for all $t \in [-M, M]$ satisfying $\kappa n + t n^{2/3} \in 2 \ndN$. Consequently, 
	\begin{align*}
		\frac{V_n - \kappa n}{n^{2/3}} \convdis V_{3/2}
	\end{align*}
	for a $3/2$-stable random variable $V_{3/2}$  with density $c_v h(c_v t)$.
\end{lemma}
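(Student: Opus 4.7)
The plan is to treat Lemma~\ref{le:main2} as an instance of the general Airy-type limit law for the largest block in a critical composition scheme, in the spirit of Banderier--Flajolet--Schaeffer--Soria. The key object would be a bivariate generating function $\cC^\bullet(x,u)$ for connected cubic planar graphs with a distinguished vertex, with $x$ marking vertices and $u$ marking the vertices of the 3-connected component containing the distinguished vertex after decomposing via the network construction of Section~\ref{sec:netdec}. Extending the network system~\eqref{eq:l}--\eqref{eq:h} to bivariate series is routine: only the polyhedral equation~\eqref{eq:h} involves $u$ non-trivially, via a substitution of the form $\cM(ux, 1 + \cD(x))/(1 + \cD(x))$, while the other four classes see $u$ only through $\cH$.

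Next I would perform a bivariate singularity analysis. At $u=1$ the series $\cD(x)$ has a $3/2$-singularity at $\rho$ with $\cD'(\rho)<\infty$, as recorded in~\eqref{eq:singhere}. Combined with the algebraic identity~\eqref{eq:rod}, which states $\rho^2(1+\cD(\rho))^3 = 27/256$ and coincides exactly with the singularity of the generating series for simple triangulations, this identifies the network composition scheme as \emph{critical} in the sense of Banderier--Flajolet--Schaeffer--Soria: the singularity of the inner series $\cD$ coincides with the singularity in the second argument of the outer series $\cM$. Such critical schemes produce a $3/2$-stable Airy (map-type) law rather than a Gaussian one. A saddle-point / coalescing-singularity analysis of $[x^n u^k]\cC^\bullet(x,u)$ then yields the joint local limit, with the Airy density $h(t)$ arising as the universal scaling profile; the exponent $\kappa$ is determined by the saddle equation (the logarithmic derivative in $u$ at the critical point, morally the asymptotic ratio of vertices in the 3-connected core to the whole graph), and $c_v$ reads off the leading coefficients in the bivariate singular expansion.

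The main obstacle is the bivariate perturbation: one must show that the dominant singularity of $\cC^\bullet(x,u)$ in $x$ moves analytically with $u$ near $1$ and develops the expected coalescing-singularity pattern, so as to obtain a local limit theorem with uniform control on $t$ in compact intervals. Two auxiliary steps are also needed: a rooting argument, analogous to~\eqref{eq:rootingargument}, to transfer the result from pointed to plain cubic planar graphs; and a concentration argument showing that the 3-connected component containing a uniformly selected vertex coincides with high probability with the \emph{largest} such component, so that $V_n$ as defined in the statement inherits the Airy fluctuations rather than just the component size of a typical vertex. The explicit numerical values of $\kappa$ and $c_v$ then follow by evaluating algebraic expressions at the known values $\rho$ and $\cD(\rho)$ from~\cite{zbMATH07213288}.
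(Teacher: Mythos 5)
The paper does not actually prove this lemma: it imports it wholesale as Theorem~1.2 of the cited reference \cite{stufler2022uniform}, so there is no internal argument here to compare your sketch against. That said, your outline is a reasonable high-level reconstruction of how that external result is obtained: form a bivariate exponential generating function tracking the size of the $3$-connected core of a pointed cubic planar graph, observe that the criticality of the composition scheme is certified by the identity $\rho^2(1+\cD(\rho))^3 = 27/256$, which makes the singularity of the inner series $\cD(x)$ coincide with the singularity of the outer series $\cQ$ in the $y$-argument of $\cM(x,y)$, and then run a Banderier--Flajolet--Schaeffer--Soria coalescing-saddle analysis to produce the map-Airy local limit.

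Two places where your sketch is thinner than the actual task. First, \eqref{eq:l}--\eqref{eq:h} is a recursive \emph{system}, not a single composition $\cM\circ\cD$; one has to show that every equation of the system behaves analytically in a $u$-neighbourhood of $1$ except for the single $\cH$-equation, where the inner and outer singularities coalesce, so that the Airy phenomenon propagates unchanged through the iteration. Second, the bivariate substitution $\cM(ux,1+\cD(x))/(1+\cD(x))$ marks the vertices of one $\cH$-core, but a distinguished vertex may land on a pole or on the auxiliary vertices created by the $\cL$, $\cI$, $\cP$ decompositions; accounting for this, and the subsequent transfer from ``size of the $3$-connected component containing a random vertex'' to ``size of the largest $3$-connected component'' (via a giant-core concentration argument showing uniqueness of the macroscopic $\cH$-core), each requires genuine work. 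None of this refutes your plan, but it amounts to reproving a non-trivial cited theorem rather than filling in a short proof the paper leaves implicit.
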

As determined in~\cite[Eq. (3.19)]{stufler2022uniform}, the exact expression for the constant $\kappa$ is given by
\begin{align}
	\kappa &= \frac{2(1 + \cD(\rho))}{2(1 + \cD(\rho)) + 3 \cD'(\rho)\rho}.
\end{align} 
Since $\kappa>1/2$ it follows that $\mC_n$ has a unique largest component with probability tending to $1$ as $n \in 2\ndN$ tends to infinity.
In this case, we may describe $\mC_n$ as the result of inserting non-isthmus network components $(\cD_i(\mC_n))_{1 \le i \le 3 V_n/2}$ at the  $3V_n/2$ edges of its  $3$-connected core $\cM(\mC_n)$. In the unlikely event that $V_n < 1/2$, we set $\cM(\mC_n)$ to some placeholder value.

Not only is $\mC_n$ likely to have a unique largest $3$-connected component, all the network components attached to it are likely to have a much smaller number of vertices:
\begin{lemma}[{\cite[Cor. 1.4]{stufler2022uniform}}]
	\label{le:compsizes}
	The numbers $|\cD_i(\mC_n)|$, $1\le i \le 3V_n/2$, of vertices in the network components attached to $\cM(\mC_n)$ satisfy
	\[
		\max_{1\le i \le 3V_n/2} |\cD_i(\mC_n)| = O_p(n^{2/3}).
	\] 
\end{lemma}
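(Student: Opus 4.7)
The plan is to exploit the network decomposition of $\mC_n$ relative to its largest $3$-connected component. Conditionally on $V_n = m$ and on the identity of $\cM(\mC_n)$, the graph $\mC_n$ is obtained by inserting independent copies of a Boltzmann $(1+\cD)$-network at each of the $3m/2$ edges of the core and then conditioning on the total vertex count equaling $n$. Hence, conditionally on $V_n = m$, the tuple $(|\cD_i(\mC_n)|)_{1 \le i \le 3m/2}$ is distributed as $(Y_1, \ldots, Y_{3m/2})$, with $Y_j$ i.i.d.\ copies of the variable $Y$ from \eqref{eq:yasmp}, conditioned on $\sum_j Y_j = n - m$.

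By Lemma~\ref{le:main2}, for any $\epsilon > 0$ there is $K > 0$ such that $|V_n - \kappa n| \le K n^{2/3}$ with probability at least $1 - \epsilon$ for large $n$. Setting $A = 1 + \cD(\rho)$ and $B = \rho \cD'(\rho)$, one checks algebraically that $\kappa = 2A/(2A + 3B)$ forces $\Ex{Y} = B/A = \frac{2(1-\kappa)}{3\kappa}$. Consequently, on the event $|V_n - \kappa n| \le K n^{2/3}$ the prescribed sum $n - m$ differs from the mean $\frac{3m}{2}\Ex{Y} = m(1-\kappa)/\kappa$ by at most a constant times $n^{2/3}$. Since by~\eqref{eq:yasmp} the law of $Y$ is regularly varying with tail exponent $3/2$, placing $Y$ in the domain of attraction of a $3/2$-stable law, the imposed deviation is precisely of the natural fluctuation scale $k^{1/\alpha} = k^{2/3}$ for a sum of $k = \Theta(n)$ i.i.d.\ copies.

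The core estimate I would establish is that for every $a > 0$, uniformly for admissible $m$,
\begin{align*}
\Prb{\max_{i} Y_i > a n^{2/3} \,\Big|\, \textstyle\sum_i Y_i = n - m} \le C a^{-3/2}
\end{align*}
for some constant $C$ independent of $a$ and $n$. A Gnedenko-type local limit theorem for lattice walks in the domain of attraction of a $3/2$-stable law yields $\Pr{\sum_i Y_i = n - m} = \Theta(n^{-2/3})$. For the numerator, a one-big-jump argument suffices: a union bound over the identity of the large summand, combined with independence and $\Pr{Y > x} \sim c' x^{-3/2}$, produces a bound of order $n \cdot (a n^{2/3})^{-3/2} \cdot n^{-2/3} = O(a^{-3/2} n^{-2/3})$, where the last factor comes from another application of the local limit theorem applied to the remaining $3m/2 - 1$ i.i.d.\ variables. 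Dividing numerator by denominator and letting $a \to \infty$ after integrating out $V_n$ via Lemma~\ref{le:main2} gives $\max_i |\cD_i(\mC_n)| = O_p(n^{2/3})$.

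The main obstacle is proving the Gnedenko local limit theorem and the matching one-big-jump bound \emph{uniformly} as $m$ varies over an interval of length $K n^{2/3}$ around $\kappa n$, keeping track of the fact that $Y$ is supported on even integers and that the regime is critical: the Gaussian scale would be $\sqrt{n}$ if $Y$ had finite variance, but the heavy tail pushes it to $n^{2/3}$, and both contributions interact on this scale. This is handled by standard characteristic-function techniques for stable attraction with suitable Potter bounds, combined with a truncation argument separating the contribution of the largest summand from the remaining bulk; the singularity expansion~\eqref{eq:singhere} of $\cD$ furnishes the precise asymptotics needed to make all constants uniform.
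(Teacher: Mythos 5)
The paper does not prove this lemma at all: it is imported verbatim from \cite{stufler2022uniform}, where it appears as Corollary~1.4, so there is no in-paper argument to compare against. Your sketch is the natural route and the essential accounting is correct. The identity $\Ex{Y}=B/A=\tfrac{2(1-\kappa)}{3\kappa}$ is immediate from the exact expression for $\kappa$ quoted after Lemma~\ref{le:main2}; on the event $|V_n-\kappa n|\le Kn^{2/3}$ the conditioned sum $n-V_n$ stays within the natural $n^{2/3}$-fluctuation window of its mean; and the numerator/denominator bookkeeping in the conditional one-big-jump estimate does produce $O(a^{-3/2})$, so tightness of $n^{-2/3}\max_i|\cD_i(\mC_n)|$ follows once $V_n$ is integrated out via Lemma~\ref{le:main2}.

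Three points deserve more care than the sketch gives them, and they are where a formal write-up must spend ink. (i) Conditioning on $V_n=m$ and on the core is not \emph{exactly} the statement that the attached components are i.i.d.\ Boltzmann $(1+\cD)$-networks conditioned on their total mass $n-m$: one must additionally condition on the event that no inserted network contains a $3$-connected component of size exceeding $m$, so that the marked core remains the largest one. On the window $m\ge n/2\gg n^{2/3}$ this extra conditioning is asymptotically negligible, but it should be stated, otherwise the i.i.d.\ structure is not exact. (ii) The law of $Y$ is supported on $\{0\}\cup\{4,6,8,\dots\}$, so the lattice span is $2$; the local limit theorem must be invoked for that span, and one should observe that $n-m$ always lies on the correct sublattice since $n$ and $m$ are both even. (iii) The uniformity over $m$ in the $Kn^{2/3}$-window needs to be made explicit: both the lower bound on $\Prb{S_{3m/2}=n-m}$ and the uniform upper bound $\sup_j\Prb{S_{3m/2-1}=j}=O(n^{-2/3})$ hold because $k=3m/2$ and the target $n-m$ each vary by only $O(n^{2/3})$, which the Gnedenko local limit theorem accommodates. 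None of this is deep, but it is precisely the content of the external reference, and a self-contained proof would be incomplete without it.
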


 Using the diameter bounds from Lemma~\ref{le:diamdplus}, it follows that the largest diameter of a network component has an even smaller order:

\begin{lemma}
	\label{le:compdiam}
	For any $\epsilon>0$ it holds that
	\[
		\max_{1 \le i \le 3 V_n / 2} \Di(\cD_i(\mC_n)) = o_p(n^{1/6 + \epsilon}).
	\]
\end{lemma}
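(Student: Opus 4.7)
The plan is to combine the component-size bound of Lemma~\ref{le:compsizes} with the per-component diameter deviation bound of Lemma~\ref{le:diamdplus}, via a union bound made affordable by the very-high-probability tails. Fix $\epsilon>0$ and choose $\epsilon' \in (0, 3\epsilon/2)$ and $\alpha \in (0, 1/6)$. By Lemma~\ref{le:compsizes}, for any $\eta>0$ we may pick $K>0$ such that the event $A_n = \{\max_{1 \le i \le 3V_n/2} |\cD_i(\mC_n)| \le K n^{2/3}\}$ has probability at least $1 - \eta$ for all large $n$. The argument will proceed on this event.

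The central structural observation is that, conditionally on the core $\cM(\mC_n)$ and on the list of sizes $s_i = |\cD_i(\mC_n)|$, the isomorphism types of the components $\cD_i(\mC_n)$ are independent, and each $\cD_i(\mC_n)$ is uniform among $\cD$-networks with $s_i$ labelled vertices (or equal to the placeholder when $s_i=0$). This is a direct consequence of the bijective network decomposition recalled in Section~\ref{sec:netdec}: given the core together with the partition of the remaining vertex labels across the $3V_n/2$ slots, each slot is independently filled by a uniform non-isthmus network of the prescribed size. In particular, the conditional law of $\cD_i(\mC_n)$ coincides (as a graph) with that of $\mD_{s_i}$ from Definition~\ref{de:defd}, so Lemma~\ref{le:diamdplus} applies directly.

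I would then split the components by size. For $s_i < n^{\alpha}$ we trivially have $\Di(\cD_i(\mC_n)) \le s_i < n^{\alpha}$. For $s_i \ge n^{\alpha}$, Lemma~\ref{le:diamdplus} (applied with exponent $\epsilon'$) supplies constants $C,c,\delta>0$ independent of $i,n$ such that
\[
\Prb{\Di(\cD_i(\mC_n)) > s_i^{1/4+\epsilon'} \,\Big|\, s_i} \le C \exp(-c s_i^{\delta}) \le C \exp(-c n^{\alpha \delta}).
\]
A union bound over the at most $3n/2$ components shows that with probability $1 - O(n \exp(-cn^{\alpha\delta})) = 1 - o(1)$, every large component satisfies $\Di(\cD_i(\mC_n)) \le s_i^{1/4+\epsilon'}$.

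Intersecting with $A_n$ yields, with probability at least $1 - \eta - o(1)$,
\[
\max_{1 \le i \le 3V_n/2} \Di(\cD_i(\mC_n)) \le \max\bigl(n^{\alpha}, (Kn^{2/3})^{1/4+\epsilon'}\bigr) = O\!\left(n^{1/6 + (2/3)\epsilon'}\right).
\]
Since $(2/3)\epsilon' < \epsilon$ and $\eta>0$ was arbitrary, this gives $\max_i \Di(\cD_i(\mC_n)) = o_p(n^{1/6+\epsilon})$. The only non-routine point is the conditional uniformity of the components used above, which is nonetheless standard from the combinatorial decomposition; beyond that the proof reduces to a union bound whose cost is easily absorbed by the stretched-exponential tail of Lemma~\ref{le:diamdplus}.
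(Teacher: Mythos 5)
Your proof is correct and follows essentially the same route as the paper: invoke Lemma~\ref{le:compsizes} for a high-probability size bound on the components, observe that conditionally on the core and the size vector each attached component is a uniform $(1+\cD)$-network of the prescribed size so that Lemma~\ref{le:diamdplus} applies, split into small (trivially bounded) and large components, and absorb the union bound with the stretched-exponential tail. The only differences are cosmetic: the paper fixes the splitting threshold at $n^{1/6}$ and takes the explicit value $\epsilon' = 9\epsilon/(8+12\epsilon)$ so that $(n^{2/3+\epsilon})^{1/4+\epsilon'} = n^{1/6+\epsilon}$, whereas you leave $\alpha<1/6$ and $\epsilon' < 3\epsilon/2$ as free parameters and use $Kn^{2/3}$ in place of $n^{2/3+\epsilon}$.
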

\begin{proof}
	By Lemma~\ref{le:main2} we know that $V_n > n / 2$ with high probability. By Lemma~\ref{le:compsizes} we also know that 
	\[
			\max_{1\le i \le 3V_n/2} |\cD_i(\mC_n)| \le n^{2/3 + \epsilon}
	\]
	with high probability. For any even integer $k > n/2$, it holds that if  $V_n = k$ and if the maximal component size (i.e. number of vertices) is at most $n^{2/3 + \epsilon}$, then the list of numbers of vertices in the components of $\mC_n$ is a vector with values in \[
	\left\{(n_1, \ldots, n_{3k/2}) \,\Big\vert\, n_1, n_2, \ldots \in 2 \ndN_0 \cap \{0, \ldots, n^{2/3 + \epsilon}\}, \sum_{i=1}^{3k/2} n_i = n-k\right\}.
	\]
	Furthermore, conditional on $V_n = k$ and on the components  having a size vector $(n_1, \ldots, n_{3k/2})$ from this set, for each $1 \le i \le 3k/2$ the $i$th component is uniformly distributed among all $n_i$-sized $(1 + \cD)$-networks. Hence, for $n_i \le n^{1/6}$  the diameter of that component is trivially smaller than $n^{1/6 + \epsilon}$, and for $n^{1/6} \le n_i \le n^{2/3 + \epsilon}$ the conditional probability for that component to have diameter larger than $n^{1/6 +\epsilon}$ is equal to
	\begin{align}
		\label{eq:todo4535}
		\Pr{\Di(\mD_{n_i}) \ge n^{1/6 + \epsilon}} &\le \Pr{\Di(\mD_{n_i}) \ge n_i^{1/4 + \epsilon'}} 
	\end{align}
with
\[
	\epsilon' = \frac{9 \epsilon}{8+12\epsilon}.
\]
By Lemma~\ref{le:diamdplus} there are constants $C,c,\delta>0$ such that
\begin{align}
	\label{eq:const}
	\Pr{\Di(\mD_m) \ge m^{1/4 + \epsilon'}} \le C \exp(-c m^\delta)
\end{align}
for all large enough $m \in 2\ndN$.
Hence, using $n_i \ge n^{1/6}$,
	\begin{align*}
		\Pr{\Di(\mD_{n_i}) \ge n_i^{1/4 + \epsilon'}} &\le C \exp(-c n_i^\delta) \\
		& \le C \exp(-c n^{\delta/6}).
	\end{align*}
	Thus, 
	\begin{multline}
		\Prb{\max_{1 \le i\le 3k/2} \Di(\cD_i(\mD_n)) \ge n^{1/6 + \epsilon} \mid V_n = k, \max_{1\le i \le 3V_n/2} |\cD_i(\mC_n)| \le n^{2/3 + \epsilon}} \\\le (3k/2)C  \exp(-c n^{\delta/6}).
	\end{multline}
	As $V_n > n/2$ and $\max_{1\le i \le 3V_n/2} |\cD_i(\mC_n)| \le n^{2/3 + \epsilon}$ both hold with high probability, it follows that
	\[
			\max_{1 \le i \le 3 V_n / 2} \Di(\cD_i(\mC_n)) < n^{1/6 + \epsilon}
	\]
	holds with high probability. As $\epsilon>0$ was arbitrary, the proof is complete.
\end{proof}

We emphasize that by Proposition~\ref{le:deledge}, the statement of Lemma~\ref{le:compdiam} also holds for the diameters of the components after removing their root edges.

\subsection{Proof of Theorem~\ref{te:main}}

The  local limit theorem from Lemma~\ref{le:main2} ensures that the core $\cM(\mC_n)$ behaves like a mixture of the random $3$-connected cubic planar graphs $(\mM_n)_{n \ge 1}$ with random size  $\kappa n + O_p(n^{2/3})$. Using Theorem~\ref{te:cubic3con}, we   deduce a scaling limit for $M(\mC_n)$ with respect to the graph distance $d_{\cM(\mC_n)}$ and the uniform measure $\mu_{\cM(\mC_n)}$ on its vertices:
\begin{corollary}
	\label{co:coreconvprel}
	There exists a constant $c^\dagger>0$ such that
	\begin{align*}
		\left(\cM(\mC_n), \frac{3^{1/4}}{2 c^\dagger} (\kappa n)^{-1/4} d_{\cM(\mC_n)}, {\mu}_{\cM(\mC_n)}\right) \convdis (\mathbf{M}, d_{\mathbf{M}}, \mu_{\mathbf{M}})
	\end{align*}
	in the Gromov--Hausdorff--Prokhorov sense as $n \in 2\ndN$ tends to infinity.
\end{corollary}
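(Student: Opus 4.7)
The plan is to combine the network decomposition with the scaling limit for the random $3$-connected cubic planar graph from Theorem~\ref{te:cubic3con} and the concentration of the core size $V_n$ around $\kappa n$ given by Lemma~\ref{le:main2}.

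First, I would use the network decomposition of Section~\ref{sec:netdec} to argue that, conditionally on the event $\{V_n = k\}$ with $k \in 2\ndN$, $k \ge 4$, the core $\cM(\mC_n)$ has the same law as $\mM_k$ when both are viewed as measured metric spaces. Indeed, the canonical decomposition of a connected cubic planar graph into a $3$-connected core with attached (possibly trivial) non-isthmus network components at its non-root edges and an additional network at its root edge, combined with the symmetry of labels in the uniform distribution of $\mC_n$, yields that conditional on the core size $k$ the core is uniformly distributed among $3$-connected cubic planar graphs of size $k$. Since graph distance and the uniform vertex measure do not depend on the particular labeling, the law of $\cM(\mC_n)$ as a measured metric space agrees with that of $\mM_k$. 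The event that $\cM(\mC_n)$ is indeed the unique largest $3$-connected component occurs with probability tending to $1$ by Lemma~\ref{le:main2}, since $\kappa > 1/2$.

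Next, I would apply Theorem~\ref{te:cubic3con} along deterministic sequences of sizes. For any sequence $k_n \in 2 \ndN$ with $k_n \to \infty$ and $k_n / n \to \kappa$, the deterministic factor $(k_n / (\kappa n))^{1/4}$ tends to $1$, and multiplying a metric by such a factor preserves any GHP limit. Hence Theorem~\ref{te:cubic3con} gives
\begin{align*}
	\left(\mM_{k_n}, \frac{3^{1/4}}{2c^\dagger} (\kappa n)^{-1/4} d_{\mM_{k_n}}, \mu_{\mM_{k_n}}\right) \convdis (\mathbf{M}, d_{\mathbf{M}}, \mu_{\mathbf{M}}).
\end{align*}
Combined with $V_n / (\kappa n) \convp 1$ from Lemma~\ref{le:main2} and the identification of the conditional law from the first step, a standard subsequence extraction transfers this to the desired convergence for $\cM(\mC_n)$.

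The main technical point is this final passage from deterministic to random size: one must argue that for any bounded continuous $\phi$ on $(\mathfrak{K}, d_\mathrm{GHP})$ the quantity $\mathbb{E}[\phi(\mM_k, \frac{3^{1/4}}{2c^\dagger}(\kappa n)^{-1/4} d_{\mM_k}, \mu_{\mM_k})]$ converges to $\mathbb{E}[\phi(\mathbf{M}, d_\mathbf{M}, \mu_\mathbf{M})]$ uniformly for $k$ in a neighborhood of $\kappa n$ that captures all but an asymptotically vanishing fraction of the mass of $V_n$. This is routine via subsequence extraction: were the conclusion to fail along some sequence of $n$'s, one could refine to a sub-subsequence along which $V_n / (\kappa n) \to 1$ almost surely by Skorokhod's theorem, which together with the deterministic-size convergence displayed above would yield a contradiction.
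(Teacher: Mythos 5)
Your proposal follows essentially the same route as the paper: identify the conditional law of the core given its size $V_n$ as that of $\mM_{V_n}$, invoke the scaling limit of Theorem~\ref{te:cubic3con}, and use Lemma~\ref{le:main2} to replace $V_n$ by $\kappa n$ in the scale factor. The paper performs the scale adjustment after applying the theorem (it first obtains convergence at scale $V_n^{-1/4}$ using $V_n\convp\infty$, then bounds $d_{\mathrm{GHP}}$ between the two rescalings by a constant times $\Di(\cM(\mC_n))\cdot\bigl|V_n^{-1/4}-(\kappa n)^{-1/4}\bigr|$ and finishes with Slutsky), whereas you fold it in upstream by working with deterministic $k_n\sim\kappa n$; these are the same argument in a different order. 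One small nit: the Skorokhod invocation in your final paragraph is off target — the required uniformity in $k$ over vanishing neighbourhoods of $\kappa n$ already follows from your displayed deterministic-subsequence convergence, because any failure of that uniformity would produce a bad deterministic sequence $k_n\sim\kappa n$, so Skorokhod and almost-sure coupling play no role.
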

\begin{proof}
	Since $V_n \convp \infty$, Theorem~\ref{te:cubic3con} immediately yields
	\begin{align}
		\label{eq:prelcorconv}
	\left(\cM(\mC_n), \frac{3^{1/4}}{2 c^\dagger} V_n^{-1/4} d_{\cM(\mC_n)}, {\mu}_{\cM(\mC_n)}\right) \convdis (\mathbf{M}, d_{\mathbf{M}}, \mu_{\mathbf{M}}).
	\end{align}
	By Proposition~\ref{pro:ghpcoupling}, it follows that the Gromov--Hausdorff--Prokhorov distance
	\[
		d_{\mathrm{GHP}}\left(\left(\cM(\mC_n), \frac{3^{1/4}}{2 c^\dagger (\kappa n)^{1/4}}  d_{\cM(\mC_n)}, {\mu}_{\cM(\mC_n)}\right), \left(\cM(\mC_n), \frac{3^{1/4}}{2 c^\dagger V_n^{1/4}}  d_{\cM(\mC_n)}, {\mu}_{\cM(\mC_n)}\right) \right)  
	\]
	is bounded by
	\begin{align}
		\label{eq:upperboundprel}
		\Di(\cM(\mC_n))\left(\frac{3^{1/4}}{2 c^\dagger (\kappa n)^{1/4}} - \frac{3^{1/4}}{2 c^\dagger V_n^{1/4}} \right) = \frac{3^{1/4}}{2 c^\dagger}  \frac{\Di_{\cM(\mC_n)}}{V_n^{1/4}} \left(1 - \left( \frac{ V_n}{\kappa n} \right)^{1/4} \right).
	\end{align}
	The diameter is a continuous functional with respect to the GHP-distance. Hence, by~\eqref{eq:prelcorconv},
	\[
		\frac{3^{1/4}}{2 c^\dagger}  \frac{\Di_{\cM(\mC_n)}}{V_n^{1/4}} \convp \Di(\mathbf{M}).
	\]
	Lemma~\ref{le:main2} ensures that the other factor satisfies
	\[
		1 - \left( \frac{ V_n}{\kappa n} \right)^{1/4}  \convp 0.
	\]
	By Slutsky's theorem, it follows that the upper bound in~\eqref{eq:upperboundprel} converges in probability to zero as $n \in 2 \ndN$ tends to infinity. Hence, by~\eqref{eq:prelcorconv}, 
		\begin{align*}
		\left(\cM(\mC_n), \frac{3^{1/4}}{2 c^\dagger} (\kappa n)^{-1/4} d_{\cM(\mC_n)}, {\mu}_{\cM(\mC_n)}\right) \convdis (\mathbf{M}, d_{\mathbf{M}}, \mu_{\mathbf{M}})
	\end{align*}
\end{proof}

The ordering of the edges in the largest $3$-connected component $\cM(\mC_n)$ was done in an arbitrary canonical way, so that we may refer to the $i$th component $\cD_i(\mC_n)$ for $1 \le i \le 3 V_n/2$. This is always possible since the vertices of $\cM(\mC_n)$ carry labels. Since the components are exchangeable, the choice of ordering is irrelevant as long as it only depends on the core itself.

 It will be convenient to assume from now on that this ordering was done  according to a breadth-first-search.  That is, for every finite $3$-connected cubic planar graph $M$ we fix a  root vertex $o_M$ and a breadth-first-search ordering of its edges starting from $o_M$,  so that the distance of the edges from the point $o$ increases monotonically. We let  $o = o_{\cM(\mC_n)} \in \cM(\mC_n)$ denote the fixed root of $\cM(\mC_n)$.

The components $(\cD_i(\mC_n))_{1 \le i \le 3 V_n/2}$ depend on each other and on $V_n$, since the sum of $V_n$ and the number of vertices in the components needs to equal exactly $n$. However, it was shown in~\cite{stufler2022uniform} that $\mC_n$ satisfies a contiguity relation to a model where a large part of these components are resampled independently according to the Boltzmann  network model $\mD$ described in Definition~\ref{de:defd}.

\begin{lemma}[{\cite[Thm. 1.3]{stufler2022uniform}}]
	\label{le:main3}
	Let $(\mD(i))_{i \ge 1}$ denote independent copies of the Boltzmann  network $\mD$.
	For any $\epsilon >0$ and  $0< \delta < 3 \kappa / 2$ there exist constants $0<c<C$ and $N >0$ and sets $(\cE_n)_{n \ge N}$ such that for all $n \in 2 \ndN$ with $n \ge N$ 
	\begin{align*}
		\Prb{ (\cM(\mC_n), (\cD_i(\mC_n))_{\delta n \le i \le 3V_n/2 }  \notin \cE_n  } < \epsilon
	\end{align*}
	and
	\begin{align*}
		\Prb{ (\cM(\mC_n),  (\mD(i))_{\delta n \le i \le 3V_n/2 }) \notin \cE_n  } < \epsilon
	\end{align*}
	and for all elements $E \in \cE_n $
	\begin{align*}
		c < \frac{\Prb{ (\cM(\mC_n), (\cD_i(\mC_n))_{\delta n \le i \le 3V_n/2  })  =E  }} {	\Prb{ (\cM(\mC_n),  (\mD(i))_{\delta n \le i \le 3V_n/2  }) =E  }}  < C.
	\end{align*}
\end{lemma}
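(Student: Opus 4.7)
\textbf{Proof proposal for Lemma~\ref{le:main3}.} The plan is to write an explicit formula for the density ratio of the two laws and to bound it above and below by positive constants on a high-probability event $\cE_n$, by combining the bijective network decomposition with a local limit theorem for sums of i.i.d.\ random variables in the $3/2$-stable domain of attraction.

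First I would exploit the bijectivity of the decomposition. Since $\mC_n$ is uniform on $n$-vertex simple connected cubic planar graphs and every such graph corresponds to a unique tuple $(M, D_1, \ldots, D_{3V_n/2})$ consisting of a $3$-connected core $M$ with canonically ordered edges decorated by non-isthmus or placeholder networks, each valid decomposition has probability $1/c_n$, where $c_n$ denotes the number of $n$-vertex simple connected cubic planar graphs. Conditional on the core, the size vector $(|\cD_i(\mC_n)|)_{1\le i\le 3V_n/2}$ is distributed as a Boltzmann size vector $(Y_1,\ldots,Y_{3V_n/2})$ conditioned on $\sum_i Y_i = n - V_n$ (with the appropriate accounting for pole identification), and given this size vector the components are independently uniform on the corresponding class of $(1+\cD)$-networks with inherited labels. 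The mixed Boltzmann model shares the same within-class uniformity, so the density ratio reduces to the size-vector ratio
\[
	R_n \;=\; \frac{\Prb{Y_1 + \cdots + Y_{\delta n - 1} = n - V_n - s}}{\Prb{Y_1 + \cdots + Y_{3V_n/2} = n - V_n}},
\]
where $V_n = |V(M)|$ and $s = \sum_{i \ge \delta n} |D_i|$ are determined by the configuration $E$.

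Next, I would bound $R_n$ uniformly by means of a local limit theorem for i.i.d.\ integer sums in the domain of attraction of a $3/2$-stable law, applicable thanks to the tail estimate~\eqref{eq:yasmp} saying $\Prb{Y = n} \sim \frac{c_{\cD}}{1 + \cD(\rho)}\, n^{-5/2}$. The standard $3/2$-stable local limit theorem yields, uniformly for normalised deviations in any compact window on scale $m^{2/3}$, an asymptotic $\Prb{Y_1 + \cdots + Y_m = r} \sim m^{-2/3}\, g((r - m\Ex{Y})/m^{2/3})$ for a continuous strictly positive density $g$. I would take $\cE_n$ to be the event that $V_n$ lies in a radius-$Ln^{2/3}$ window around $\kappa n$ and that $s$ lies in a similar compact window around its conditional mean $(3V_n/2 - \delta n)\Ex{Y}$. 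Using Lemma~\ref{le:main2} for $V_n$ and the local limit theorem applied to $(Y_i)_{i \ge \delta n}$ for $s$, the complement of $\cE_n$ has probability less than $\epsilon$ under both laws for $L$ chosen large enough, while on $\cE_n$ numerator and denominator of $R_n$ are both of order $n^{-2/3}$ with bounded continuous quotient, pinching $R_n$ between two positive constants $c, C$.

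The main obstacle is the quantitative $3/2$-stable local limit theorem: one needs a \emph{two-sided} uniform Airy-type density estimate on compact windows of normalised deviations, valid as $m$ ranges over the random interval $[\delta n - 1,\, 3V_n/2]$ and as the target value varies with $V_n$ and $s$. Compatibility of the two limiting densities is automatic in principle, since the map-type Airy law appearing in Lemma~\ref{le:main2} is precisely what emerges from the local limit theorem applied to the total size $\sum_{i=1}^{3V_n/2} Y_i$; nevertheless, turning this compatibility into uniform two-sided constants $c,C$ on $\cE_n$ requires a careful analytic combinatorics argument of the same flavour as the proof of Lemma~\ref{le:main2}.
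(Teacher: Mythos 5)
The paper does not actually prove Lemma~\ref{le:main3}: it imports it verbatim from the companion work \cite[Thm.~1.3]{stufler2022uniform} (the only remark following the statement is that exchangeability lets one consider the index window $\delta n \le i \le 3V_n/2$ rather than $1\le i \le 3V_n/2 - \delta n$). So there is no in-paper proof to compare against.

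That said, your sketch is the natural route and almost certainly tracks what the cited reference does: a bijective decomposition into core plus $(1+\cD)$-components, conditional uniformity within each size class (which makes the per-component factors cancel in numerator and denominator, leaving exactly the quotient $R_n$ of two convolution probabilities), and then a two-sided local limit estimate in the $3/2$-stable regime to pin $R_n$ between constants on a windowed event $\cE_n$. Two things are worth making explicit. First, the implicit balance identity $\tfrac{3\kappa}{2}\,\Ex{Y} = 1-\kappa$, which follows from $\kappa = 2(1+\cD(\rho))/\bigl(2(1+\cD(\rho))+3\rho\cD'(\rho)\bigr)$ and $\Ex{Y}=\rho\cD'(\rho)/(1+\cD(\rho))$; without it, $n-V_n - (3V_n/2)\Ex{Y}$ would not stay on scale $n^{2/3}$ and the LLT in its central window would not apply. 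Second, the concentration of $s$ around $(3V_n/2-\lfloor\delta n\rfloor)\Ex{Y}$ must be verified under \emph{both} laws: under the Boltzmann law it is a sum of i.i.d.\ terms independent of $V_n$, but under the conditioned law the components are exchangeable with a global size constraint, and you should spell out that selecting a fixed fraction of the slots of a vector conditioned on its total still concentrates at the right scale (the two conditional means differ by $O(n^{2/3})$ via the balance identity). The remaining ``main obstacle'' you flag — a genuinely two-sided, uniform $3/2$-stable local limit theorem over the random range of $m$ and target — is indeed where the bulk of the analytic work lies, and your proposal correctly points at it without discharging it; that is consistent with the fact that the full argument occupies a separate paper.
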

Note that since the components $(\cD_i(\mC_n))_{1 \le i \le 3 V_n/2}$ are exchangeable, it is irrelevant if we consider the subfamily for $\delta n \le i \le 3V_n/2$ or the subfamily for $1 \le i \le 3V_n/2 - \delta n $.

Our first observation states that taking $\delta>0$ small means that the area $\Omega_\delta$ consisting of the first $\lfloor \delta n \rfloor $ components of $\mC_n$ together with the corresponding subset in $\cM(\mC_n)$ is likely to have small diameter.

\begin{lemma}
	\label{le:suffscmall}
	Let $\Omega_\delta \subset \cM(\mC_n)$ denote the subset of vertices consisting of the endpoints of the first $\lfloor \delta n \rfloor$ edges. For all $\epsilon>0$ we may select $\delta>0$ small enough, such that 
	\[
		\Pr{ \Di(\Omega_\delta) \le \epsilon n^{1/4} } \ge 1 - \epsilon
	\]
	for all sufficiently large $n \in 2 \ndN$.
\end{lemma}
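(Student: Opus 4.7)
The plan is to exploit that the BFS ordering of edges confines $\Omega_\delta$ to a small graph ball around the root $o = o_{\cM(\mC_n)}$, and then to deduce that such balls contain the required number of edges from the scaling limit together with the full support of $\mu_{\mathbf{M}}$. By the very definition of a BFS ordering, if $E_R$ denotes the number of edges of $\cM(\mC_n)$ with both endpoints in the graph ball $B_R(o)$, then the condition $E_R \ge \lfloor \delta n \rfloor$ forces the first $\lfloor \delta n \rfloor$ edges to lie within $B_R(o)$, yielding $\Omega_\delta \subset B_R(o)$ and $\Di(\Omega_\delta) \le 2R$. Cubicity moreover gives the deterministic inequality $E_R \ge \tfrac32 V_{R-1}(o)$, with $V_R(o) := |B_R(o)|$: each of the $V_{R-1}(o)$ vertices of $B_{R-1}(o)$ has all three of its neighbors inside $B_R(o)$, so counting incidences shows that at least $3 V_{R-1}(o)$ half-edges, hence at least $\tfrac32 V_{R-1}(o)$ edges, sit inside $B_R(o)$.

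Thus, given $\epsilon > 0$, it suffices to choose $\delta > 0$ so that with probability at least $1 - \epsilon$ the graph ball of radius $\lfloor (\epsilon/4) n^{1/4} \rfloor$ around $o$ contains at least $\tfrac{2\delta}{3} n$ vertices. I will obtain this from Corollary~\ref{co:coreconvprel} together with Lemma~\ref{co:fullrandom}: since $\mu_{\mathbf{M}}$ is almost surely fully supported, for the fixed rescaled radius $r_0 := \epsilon \cdot 3^{1/4}/(8 c^\dagger \kappa^{1/4})$ and for $\eta := \epsilon/2$, Lemma~\ref{co:fullrandom} supplies a constant $\delta' > 0$ such that for all large enough $n$, with probability at least $1 - \eta$, every closed ball of $\cM(\mC_n)$ of rescaled radius $r_0$ has $\mu_{\cM(\mC_n)}$-mass at least $\delta'$. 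The infimum nature of $\rho^{\cM(\mC_n)}_{r_0}$ is crucial here, since although $o$ is determined canonically by the random graph, the bound is uniform in the center and therefore applies to $o$ in particular. Translating to the graph metric gives $V_{\lfloor (\epsilon/4) n^{1/4} \rfloor}(o) \ge \delta' V_n$, and combining with $V_n \ge \kappa n /2$ from Lemma~\ref{le:main2} yields $V_{\lfloor (\epsilon/4) n^{1/4} \rfloor}(o) \ge \delta' \kappa n/2$ with probability at least $1 - \epsilon$.

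Finally I will set $\delta := \delta' \kappa / 6$, so that $\tfrac{2\delta}{3} n = \delta' \kappa n / 9 \le \delta' \kappa n/2$. The cubic-graph inequality then yields $E_{\lfloor (\epsilon/4) n^{1/4} \rfloor + 1} \ge \lfloor \delta n \rfloor$ for all large $n$, whence $\Omega_\delta \subset B_{\lfloor (\epsilon/4) n^{1/4} \rfloor + 1}(o)$ and $\Di(\Omega_\delta) \le 2\lfloor (\epsilon/4) n^{1/4} \rfloor + 2 \le \epsilon n^{1/4}$ for $n$ sufficiently large. The only genuine conceptual input is the full-support property of the Brownian map; the rest is bookkeeping the translation between rescaled and unrescaled radii and the elementary cubic counting inequality. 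The main (though minor) obstacle is handling that $o$ is not a uniform vertex but canonically determined by the random graph, which the infimum formulation of $\rho^X_\epsilon$ in Lemma~\ref{co:fullrandom} resolves cleanly.
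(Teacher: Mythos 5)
Your proposal is correct and follows essentially the same approach as the paper's proof: both exploit the breadth-first-search ordering to confine $\Omega_\delta$ to a graph ball around the canonically chosen root $o$ of $\cM(\mC_n)$, and then invoke the full support of the Brownian map (via Corollary~\ref{co:coreconvprel} together with Lemma~\ref{co:fullrandom}, whose infimum over centers indeed handles the non-uniform choice of $o$) to bound the radius of that ball. The paper runs the same ball-mass argument in the contrapositive direction (showing $\mu_{\cM(\mC_n)}(C_R^{\cM(\mC_n)}(o))$ is small, hence $R$ must be small), whereas you lower-bound vertex counts in small balls directly and convert to edge counts via cubicity; the only minor care point in your version is that one should count edges whose nearer endpoint lies in $B_{R-1}(o)$ (as your $\tfrac32 V_{R-1}(o)$ bound in fact does) to ensure they precede index $\lfloor\delta n\rfloor$ in the BFS order.
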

\begin{proof}
	Recall the notation for closed balls from~\eqref{eq:mbal}.
	Since we ordered the edges in the $3$-connected core $\cM(\mC_n)$ in a breadth-first-search manner starting from a vertex $o$, it follows that there exists an integer $R \ge 0$ with
	\begin{align}
		\label{eq:inclus}
		C_R^{\cM(\mC_n)}(o) \subset \Omega_\delta \subset C_{R+1}^{\cM(\mC_n)}(o).
	\end{align}
	It follows that $C_R^{\cM(\mC_n)}(o)$ has at most $\delta n$ edges, and consequently at most $2 \delta n$ vertices. Hence
	\begin{align}
		\label{eq:aaprel}
		\mu_{\cM(\mC_n)}(C_R^{\cM(\mC_n)}(o)) \le 2 \delta.
	\end{align}
	As recalled in Section~\ref{sec:brownianmap}, the Brownian map almost surely has full support. By Corollary~\ref{co:fullrandom} and Corollary~\ref{co:coreconvprel}, it follows that for all $\epsilon_1, \epsilon_2 >0$ there exists a $\delta_1>0$ such that for all sufficiently large $n$
	\begin{align}
		\label{eq:bbprel}
		\Prb{ \inf_{v \in \cM(\mC_n)} \mu_{\cM(\mC_n)} (C_{\epsilon_1}^{\cM(\mC_n)}(v)) \ge \delta_1} > 1 - \epsilon_2. 
	\end{align}
	Hence, by~\eqref{eq:aaprel} and~\eqref{eq:bbprel}, for all $\delta < \delta_1/2$, we have
	\[
		\Prb{R \frac{3^{1/4}}{2 c^\dagger} (\kappa n)^{-1/4} < \epsilon_1 } > 1 - \epsilon_2.
	\]
	By~\eqref{eq:inclus} we have
	\[
		\Di(\Omega_\delta)  \le R+1,
	\]
	Thus, taking $\epsilon_2= \epsilon$, $\epsilon_1= \frac{3^{1/4}}{4 c^\dagger} \kappa ^{-1/4} \epsilon $  we arrive at
	\[
				\Prb{\Di(\Omega_\delta) < \epsilon} > 1 - \epsilon
	\]
	for all sufficiently large $n$ and all $0 < \delta < \delta_1/2$.
\end{proof}

Recall the definition of the link weight $\iota \ge 1$ from Definition~\ref{de:linkweight}. By Lemma~\ref{le:fpp3con} there exists a corresponding constant $c_{\mathrm{fpp}}>0$ such that
\begin{align}
	\label{eq:thefpponcorefor}
	n^{-1/4} \sup_{u,v \in \mM_n} \left | c_{\mathrm{fpp}} d_{\mM_n}(u,v) -  d_{\mathrm{fpp}}(u,v)\right | \convp 0
\end{align}
as $n \in 2\ndN$ tends to infinity.

\begin{lemma}
	\label{le:hauptlemma}
	We have
	\[
		\sup_{x,y \in \cM(\mC_n)} | d_{\mC_n}(x,y) - c_{\mathrm{fpp}} d_{\cM(\mC_n)}(x,y) | = o_p(n^{1/4}).
	\]
\end{lemma}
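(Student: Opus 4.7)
The plan is to reduce the statement to the iid first-passage percolation concentration of Lemma~\ref{le:fpp3con} via the network decomposition of Section~\ref{sec:netdec} and the contiguity relation of Lemma~\ref{le:main3}. First I would identify $d_{\mC_n}$ restricted to core vertices with an FPP distance on $\cM(\mC_n)$. Each non-isthmus component $\cD_i(\mC_n)$ inserted at an edge of the core is attached to the rest of $\mC_n$ only through its two poles, and a path between core vertices traversing this component costs exactly the link weight $\iota_i$ from Definition~\ref{de:linkweight}, while entering and exiting through the same pole is wasteful. Hence for all $x,y \in \cM(\mC_n)$,
\[
d_{\mC_n}(x,y) = d_{\mathrm{fpp}}(x,y),
\]
where $d_{\mathrm{fpp}}$ denotes FPP on $\cM(\mC_n)$ with edge weights $(\iota_i)_{1 \le i \le 3V_n/2}$. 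The constant $c_{\mathrm{fpp}}$ would be the one produced by Lemma~\ref{le:fpp3con} applied to $\iota$, which has finite exponential moments by Lemma~\ref{le:linkw}.

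To handle the dependence between the weights $\iota_i$, fix $\epsilon>0$, select $\delta>0$ via Lemma~\ref{le:suffscmall} so that $\Di(\Omega_\delta) \le \epsilon n^{1/4}$ with probability $\ge 1-\epsilon$, and enlarge the probability space by independent iid copies $(\iota_i^\circ)_{i \ge 1}$ of $\iota$. Define the auxiliary FPP distance $\tilde d$ on $\cM(\mC_n)$ using weight $\iota_i$ for $i \ge \delta n$ and $\iota_i^\circ$ for $i < \delta n$. Then $\tilde d$ is a measurable functional of $(\cM(\mC_n), (\cD_i(\mC_n))_{i \ge \delta n}, (\iota_i^\circ)_{i \ge 1})$, and by Lemma~\ref{le:main3} the real-model probability of the event $\{\sup_{x,y}|\tilde d - c_{\mathrm{fpp}} d_{\cM(\mC_n)}| > \epsilon n^{1/4}\}$ is at most a bounded constant times its probability in the Boltzmann model. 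In that model all weights entering $\tilde d$ are fully iid copies of $\iota$, so Lemma~\ref{le:fpp3con} applied on $\cM(\mC_n) \eqdist \mM_{V_n}$ forces this probability to zero; contiguity then yields $\sup_{x,y}|\tilde d - c_{\mathrm{fpp}} d_{\cM(\mC_n)}| = o_p(n^{1/4})$ in the real model as well.

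It then remains to compare $\tilde d$ and $d_{\mathrm{fpp}}$, which differ only on the early edges, whose endpoints all lie in $\Omega_\delta$. For a $\tilde d$-geodesic $\pi$ between $x$ and $y$, let $a,b$ be the first entry to and last exit from $\Omega_\delta$ along $\pi$ (if $\pi$ never meets $\Omega_\delta$, there is nothing to show, as only late edges appear). The $\tilde d$-length of the segment of $\pi$ from $a$ to $b$ equals $\tilde d(a,b)$, which by the concentration of $\tilde d$ applied at $a,b \in \Omega_\delta$ is at most $c_{\mathrm{fpp}}\,\Di(\Omega_\delta) + o_p(n^{1/4}) = O(\epsilon n^{1/4})$. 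Since $\iota \ge 1$, this segment contains at most $O(\epsilon n^{1/4})$ edges, so swapping the fresh iid weights for the real weights on its early edges changes its length by $\sum_{e}(\iota_e - \iota_e^\circ)$, which---using the conditional exchangeability of the components given the core (noted after Lemma~\ref{le:main3}) together with the iid Boltzmann concentration available on the late weights---is itself of order $O(\epsilon n^{1/4}) + o_p(n^{1/4})$. A symmetric argument starting from a $d_{\mathrm{fpp}}$-geodesic yields the reverse bound, so $|\tilde d - d_{\mathrm{fpp}}| = O(\epsilon n^{1/4}) + o_p(n^{1/4})$ uniformly; letting $\epsilon \downarrow 0$ concludes.

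The main obstacle I anticipate is this last comparison. A crude estimate using the worst-case weight bound $\max_i \iota_i = o_p(n^{1/6+\epsilon'})$ from Lemma~\ref{le:compdiam} multiplied by the $O(\epsilon n^{1/4})$ edge-count of the segment only produces $o_p(n^{5/12})$, which is not sharp enough. One must instead invoke the finite-mean behaviour of $\iota$ together with the conditional exchangeability of the components given the core to recover an essentially linear concentration of the sum of real link weights on the early edges of the segment, thereby closing the bootstrap and transferring the iid FPP concentration from $\tilde d$ to $d_{\mathrm{fpp}}$.
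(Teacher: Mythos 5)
Your overall strategy --- express $d_{\mC_n}$ restricted to the core as first-passage percolation with the link weights $\iota_i$, split the edges at an index $\delta n$, and use Lemma~\ref{le:main3} together with the i.i.d.\ concentration of Lemma~\ref{le:fpp3con} --- is indeed the paper's strategy. The point where it breaks is the final comparison between your auxiliary metric $\tilde d$ and the true metric $d_{\mathrm{fpp}}$.

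The contiguity transfer you invoke is valid for $\tilde d$ precisely because $\tilde d$ is a function of $(\cM(\mC_n), (\cD_i(\mC_n))_{i \ge \delta n}, \iota^\circ)$. It does \emph{not} apply to $d_{\mathrm{fpp}}$, which also depends on the early components. So the entire burden falls on bounding $|\tilde d - d_{\mathrm{fpp}}| = O(\epsilon n^{1/4}) + o_p(n^{1/4})$ directly in the real model, and there your argument has a genuine gap. For the direction $d_{\mathrm{fpp}}(x,y) \le \tilde d(x,y) + \ldots$, you swap fresh weights for real weights along a $\tilde d$-geodesic $\pi$, whose segment through $\Omega_\delta$ has $O(\epsilon n^{1/4})$ edges. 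You then claim that $\sum_{e\ \text{early on}\ \pi} \iota_e = O(\epsilon n^{1/4}) + o_p(n^{1/4})$ via ``exchangeability plus finite mean''. This does not go through uniformly over $x,y$: the segment of $\pi$ through $\Omega_\delta$ is an adversarially chosen (over all vertex pairs, and over the many candidate $\tilde d$-geodesics through $\Omega_\delta$) subset of the $\lfloor \delta n \rfloor$ early edges, and exchangeability controls marginals, not a maximum over subsets of size $O(\epsilon n^{1/4})$. As you yourself note, the only available worst-case bound on the real early weights is $\max_i \Di(\cD_i(\mC_n)) = o_p(n^{1/6 + \epsilon'})$ from Lemma~\ref{le:compdiam}, which gives $o_p(n^{5/12})$ for the segment, not $o_p(n^{1/4})$. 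The proposed fix has no mechanism to rule out a short $\tilde d$-segment that happens to land on heavy real weights.

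This is exactly the obstacle the paper's Part~1 is designed to remove. There the argument is a second, independent contiguity application: after making $\Di_{\cM(\mC_n)}(\Omega_\delta)$ small via Lemma~\ref{le:suffscmall}, the paper uses the core-ball mass estimate (from Lemma~\ref{le:conconlambda} and Corollary~\ref{co:coreconvprel}) to localize a $d_{\mathrm{fpp}}$-ball of radius $\Theta(\epsilon n^{1/4})$ around $o$ to the first $\kappa n$ edges, argues in the Boltzmann model that any point of $\Omega_\delta$ is reachable from $o$ by an $O(\epsilon n^{1/4})$-length path staying within those $\kappa n$ components, observes that this event is measurable with respect to $(\cM(\mC_n), (\mD(i))_{i \le \kappa n})$, and then transfers it to the real model by Lemma~\ref{le:main3} applied to the complementary family of components. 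The output is the genuinely real-model bound $d_{\mC_n}(o,\Omega_\delta) = o_p(n^{1/4})$, which your proposal is missing and which does not follow from exchangeability alone. (The paper then works with contraction of $\Omega_\delta$ rather than resampling, which has the practical advantage that $d^\delta_{\mC_n}$ and $d^\delta_{\mathrm{fpp}}$ are determined by the \emph{same} late components so a single contiguity step compares them, but that is a matter of bookkeeping; the essential missing piece in your argument is the real-model control of $\Omega_\delta$'s diameter.)
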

\begin{proof} 
	The idea of the proof is that a small linear fraction of the approximately $3 \kappa n/ 2$ components inserted at the $3$-connected core causes a small distortion of distances, and that concentration inequalities that hold with high probability for distances expressed in terms of only the core and the remaining components may be transferred using Lemma~\ref{le:main3}. One of the difficulties is the subtle interplay between masses and radii of balls. In particular, Lemma~\ref{le:suffscmall} does not state some property to hold with high probability,  hence a very careful reasoning is required to make the contiguity argument work.
	
	We proceed in two parts. In the first part, we control the first $\kappa n$ components attached to the core $\cM(\mC_n)$ and show that a small linear fraction $\delta n$ of edges of $\cM(\mC_n)$ is likely to have a small diameter in both $\mC_n$ and a model where the components inserted at $\cM(\mC_n)$ are i.i.d. copies of the Boltzmann network $\mD$.  In the second part, we contract the $\delta n$ edges to a single point and control the remaining components with index $\delta n \le i \le 3 V_n/2$ in order to bound the distortion between  the subspace distance on $\cM(\mC_n)$ induced from $\mC_n$ and the subspace distance on $\cM(\mC_n)$ induced from the graph obtained by inserting i.i.d. copies of $\mD$ at all edges of $\cM(\mC_n)$.

	Let us start with the first part. By Lemma~\ref{le:main3} and exchangeability of components attached to the $3$-connected core, for each $\epsilon_1>0$ there exist constants $c_1,C_1 >0$ (that do not depend on $n$) such that for all sufficiently large $n \in 2\ndN$ there exists a set  $\cF_n$ with
	\begin{align}
		\label{eq:bach1}
		\Prb{ (\cM(\mC_n), (\cD_i(\mC_n))_{1 \le i \le \min(\kappa n ,3V_n/2) }  \notin \cF_n  } < \epsilon_1
	\end{align}
	and
	\begin{align}
		\label{eq:bach2}
		\Prb{ (\cM(\mC_n),  (\mD(i))_{1\le i \le \min(\kappa n , 3V_n/2) }) \notin \cF_n  } < \epsilon_1
	\end{align}
	and uniformly for all $F \in \cF_n $
	\begin{align}
		\label{eq:indeqaul}
		c_1 < \frac{\Prb{ (\cM(\mC_n), (\cD_i(\mC_n))_{1 \le i \le \min(\kappa n /2,V_n)  })  = F  }} {	\Prb{ (\cM(\mC_n),  (\mD(i))_{1 \le i \le \min(\kappa n , 3V_n/2)  }) = F  }}  < C_1.
	\end{align}
	Recall the notation for open balls in~\eqref{eq:openball}. By Lemma~\ref{le:conconlambda} and Corollary~\ref{co:coreconvprel},  for each $\epsilon_2>0$ there is a $\delta_2>0$  with
	\begin{align}
		\Prb{ \sup_{u \in \cM(\mC_n)} \mu_{\cM(\mC_n)}\left( B_{\delta_2 n^{1/4}}^{\cM(\mC_n)}(u)\right) > 1/6 } < \epsilon_2.
	\end{align}
	The notational choice of index $2$ is to emphasize that $\delta_2$ depends on $\epsilon_2$.  We assume $\epsilon_2$ to be small enough so that
	\begin{align}
		\label{eq:handelimmed0}
		4 \epsilon_2 C_2 < \epsilon_1.
	\end{align}
	Let $\epsilon>0$  be small enough so that
	\begin{align}
		\label{eq:handelimmed}
		(3c_{\mathrm{fpp}}^{-1}+4)\epsilon < \delta_2 \qquad \text{and} \qquad 4 \epsilon C_2 <  \epsilon_1.
	\end{align}
	Recall that we fixed a vertex $o$ and a breadth-first-search ordering of the edges of $\cM(\mC_n)$ so that the $d_{\cM(\mC_n)}$-distance of an edge from $o$ increases monotonically with its index. By~\eqref{eq:handelimmed}, 
	\begin{align}
		\label{eq:lucifer}
		\Prb{ \mu_{\cM(\mC_n)}( B_{(3c_{\mathrm{fpp}}^{-1}+4)\epsilon n^{1/4}}^{\cM(\mC_n)}(o)) > 1/6 } < \epsilon_2.
	\end{align}
	If a subset of the cubic $3$-connected planar graph $\cM(\mC_n)$ has at most $V_n/6$ vertices, then it is incident to at most $V_n/2$ edges. 	
	Since the minimum of the distances of the endpoints of an edge from $o$ increases monotonically with its index, if follows by Lemma~\ref{le:main2} and Inequality~\eqref{eq:lucifer} that
	\begin{align}
		\label{eq:lucifer2}
		\Prb{  B_{(3c_{\mathrm{fpp}}^{-1}+4)\epsilon n^{1/4}}^{\cM(\mC_n)}(o) \text{ incident to any edge with index $ i > \kappa n$} } < 2\epsilon_2
	\end{align}
	for all sufficiently large $n$.  By Lemma~\ref{le:suffscmall} we may choose $\delta>0$ small enough so that for all large enough $n \in 2\ndN$
	\begin{align}
		\label{eq:handel}
		\Pr{ \Di(\Omega_\delta) < \epsilon n^{1/4} } \ge 1 - \epsilon.
	\end{align}
	Since $o \in \Omega_\delta$, this means \[
	\Prb{\Omega_\delta \subset B_{\epsilon n^{1/4}}^{\cM(\mC_n)}(o)} \ge 1 - \epsilon
	\]
	for large enough $n$.

	 We let $d_{\mathrm{fpp}}$ denote the subspace distance on $\cM(\mC_n)$ induced by the cubic planar graph described by the components $(\cM(\mC_n),  (\mD(i))_{1 \le i \le 3V_n/2})$. Since $\iota$ is precisely the length of a link after inserting the Boltzmann network $\mD$, it follows that $d_{\mathrm{fpp}}$ is distributed like a first passage percolation metric with link weight $\iota$.

	Using~\eqref{eq:thefpponcorefor} and Lemma~\ref{le:main2}, it follows by identical arguments as for Corollary~\ref{co:coreconvprel} that 
	\begin{align}
		\label{eq:devil1}
		\Prb{ \sup_{u,v \in \cM(\mC_n)} \left | c_{\mathrm{fpp}} d_{\cM(\mC_n)}(u,v) -  d_{\mathrm{fpp}}(u,v)\right | > \epsilon  n^{1/4} } \to 0
	\end{align}
	as $n \in 2 \ndN$ tends to infinity. 
	
	Letting $B_{(\cdot)}^{\mathrm{fpp}}(\cdot)$ denote open balls in $\cM(\mC_n)$ with respect to $d_{\mathrm{fpp}}$, it follows by~\eqref{eq:devil1} that
	\begin{align}
		\label{eq:prepboundA}
		\Prb{\Omega_\delta \subset B_{ (c_{\mathrm{fpp}}+1)\epsilon n^{1/4}}^{\mathrm{fpp}}(o)} \ge 1- 2 \epsilon
	\end{align}
	for large enough $n$,
	and
	\begin{align}
		\label{eq:cyphill}
		\Prb{ B_{ 3(c_{\mathrm{fpp}}+1)\epsilon n^{1/4}}^{\mathrm{fpp}}(o) \subset B_{ (3c_{\mathrm{fpp}}^{-1}+4)\epsilon n^{1/4}}^{\cM(\mC_n)}(o)} = 1 + o(1).
	\end{align}
	Combining~\eqref{eq:cyphill} with~\eqref{eq:lucifer2} we obtain
	\begin{align}
				\label{eq:prepboundB}
		\Prb{  B_{ 3(c_{\mathrm{fpp}}+1)\epsilon n^{1/4}}^{\mathrm{fpp}}(o) \text{ incident to any edge of $\cM(\mC_n)$ with index  $i>\kappa n$ } } < 3\epsilon_2
	\end{align}
	for large enough $n$. By~\eqref{eq:prepboundA} and~\eqref{eq:prepboundB} with probability at least $1 - 2 \epsilon - 3 \epsilon_2$ any point in $\Omega_\delta$ can be reached from the point $o$ by path in the graph corresponding to $(\cM(\mC_n),  (\mD(i))_{1\le i \le  3V_n/2})$ that only passes through  components $\mD(i)$ with index~$i \le \kappa n$.

	Using~\eqref{eq:bach2} and~\eqref{eq:indeqaul}, it follows that 
	\begin{align}
		\Pr{ d_{\mC_n}(o, \Omega_\delta) > \epsilon n^{1/4}} \le \epsilon_1  + C_1(2 \epsilon + 3 \epsilon_2)
	\end{align}
	for sufficiently large $n \in 2\ndN$. Hence, by Inequalities~\eqref{eq:handelimmed0} and~\eqref{eq:handelimmed}, 
	\begin{align}
		\label{eq:krypt}
		\Pr{ d_{\mC_n}(o, \Omega_\delta) > \epsilon n^{1/4}} \le 2 \epsilon_1.
	\end{align}
	This concludes the first part of the proof.
	
	We proceed with the second part. By Lemma~\ref{le:main3} there exist constants $c,C>0$ (that do not depend on $n$) such that for all sufficiently large $n \in 2\ndN$ there exists a set  $\cE_n$ with
	\begin{align}
		\label{eq:handel1}
		\Prb{ (\cM(\mC_n), (\cD_i(\mC_n))_{\delta n \le i \le 3V_n/2 }  \notin \cE_n  } < \epsilon
	\end{align}
	and
	\begin{align}
		\label{eq:handel2}
		\Prb{ (\cM(\mC_n),  (\mD(i))_{\delta n \le i \le 3V_n/2 }) \notin \cE_n  } < \epsilon
	\end{align}
	and for all elements $E \in \cE_n $
	\begin{align}
		\label{eq:ineqcontigu}
		c < \frac{\Prb{ (\cM(\mC_n), (\cD_i(\mC_n))_{\delta n \le i \le 3V_n/2  })  =E  }} {	\Prb{ (\cM(\mC_n),  (\mD(i))_{\delta n \le i \le 3V_n/2  }) =E  }}  < C.
	\end{align}
	We let $\tilde{\cE}_n \subset \cE_n$ denote the subset of all sequences $E= (M, (D_i)_{\delta n \le i \le 3V/2})$ such that the endpoints of the first $\lfloor \delta n \rfloor$ edges of $M$ in the breadth-first-search order starting from the vertex $o_M$ form a subset with diameter less than $\epsilon n^{1/4}$. By~\eqref{eq:handel},~\eqref{eq:handel1}, and ~\eqref{eq:handel2} it follows that
	\begin{align}
	\label{eq:keynote1}
	\Prb{ (\cM(\mC_n), (\cD_i(\mC_n))_{\delta n \le i \le 3V_n/2 }  \notin \tilde{\cE}_n  } < 2\epsilon
\end{align}
and
\begin{align}
	\label{eq:keynote2}
	\Prb{ (\cM(\mC_n),  (\mD(i))_{\delta n \le i \le 3V_n/2 }) \notin \tilde{\cE}_n  } < 2\epsilon.
\end{align}

	For any $\delta>0$  we let $\cM^\delta(\mC_n)$ denote the graph obtained by contracting the subset $\Omega_\delta \subset \cM(\mC_n)$ to a single point. Any edge with one end in $\Omega_\delta$ and the other outside of $\Omega_\delta$  becomes an edge incident to this contracted point. We let  $d^\delta_{\cM(\mC_n)}$ denote the graph distance on $\cM^\delta(\mC_n)$. We let $d^\delta_{\mC_n}$ denote the distance on  $\cM^\delta(\mC_n)$ obtained by taking $\mC_n$,  $\Omega_\delta$ contracting $\Omega^\delta$ to a single point, and taking the subspace distance on~$\cM^\delta(\mC_n)$.

	 We also define the distance $d_{\mathrm{fpp}}^\delta$ on $\cM^\delta(\mC_n)$ obtained by taking the graph described by $(\cM(\mC_n),  (\mD(i))_{1 \le i \le 3V_n/2})$, contracting $\Omega_\delta$ to a point, and forming the subspace distance on the subset $\cM^\delta(\mC_n)$.

In order to shorten notation, we define the events
  \[
 \cA= \left\{ (\cM(\mC_n), (\cD_i(\mC_n))_{\delta n \le i \le 3V_n/2 })  \in \tilde{\cE}_n \right\}
 \]
 and
 \[
\cB = \left\{ (\cM(\mC_n),  (\mD(i))_{\delta n \le i \le 3V_n/2 }) \in \tilde{\cE}_n \right \}. 
\]
By definition of $\tilde{\cE}_n$, each of these events entails that \[
\Di(\Omega_\delta) < \epsilon n^{1/4}.\]

Recall that by~\eqref{eq:devil1}
\[
	\Prb{ \sup_{u,v \in \cM(\mC_n)} \left | c_{\mathrm{fpp}} d_{\cM(\mC_n)}(u,v) -  d_{\mathrm{fpp}}(u,v)\right | > \epsilon  n^{1/4} } \to 0
\]
as $n \in 2\ndN$ becomes large. If the inequality $\Di(\Omega_\delta) \le \epsilon n^{1/4}$ holds simultaneously with \[\sup_{u,v \in \cM(\mC_n)} \left | c_{\mathrm{fpp}} d_{\cM(\mC_n)}(u,v) -  d_{\mathrm{fpp}}(u,v)\right | \le \epsilon  n^{1/4},\] then 
\[
	\sup_{u,v \in \Omega_{\delta}} d_{\mathrm{fpp}}(u,v) \le (\epsilon + \epsilon c_{\mathrm{fpp}}) n^{1/4}.
\] 
 In this case, for all vertices $u,v \in \cM(\mC_n) - \Omega_\delta$ (that is, all vertices of $\cM(\mC_n)$ except those in $\Omega_\delta$) 
\[
	|d_{\mathrm{fpp}}(u,v) - d_{\mathrm{fpp}}^\delta(u,v)| \le (\epsilon + \epsilon c_{\mathrm{fpp}}) n^{1/4}.
\]
(To see this, note that if a geodesic from $u$ to $v$ intersects with $\Omega_\delta$, then it intersects at at a first point $a$ and a last point $b$, and $d_{\mathrm{fpp}}(a,b) \le (\epsilon + \epsilon c_{\mathrm{fpp}}) n^{1/4}$. Contracting $\Omega_\delta$ hence shortens the distance between $u$ and $v$ by at most $(\epsilon + \epsilon c_{\mathrm{fpp}})n^{1/4}$.)
Using~\eqref{eq:devil1} and the triangle inequality, it follows that
\begin{align}
	\label{eq:devil2}
	 	 \Prb{\cB,   \sup_{u,v \in \cM(\mC_n) - \Omega_\delta } \left | c_{\mathrm{fpp}} d_{\cM(\mC_n)}(u,v) -  d_{\mathrm{fpp}}^\delta(u,v)\right | >( 2 \epsilon + \epsilon c_{\mathrm{fpp}}) n^{1/4} } \to 0
\end{align}
as $n \in 2 \ndN$ tends to infinity. 	 Note that $d^\delta_{\mC_n}$ is fully determined by the components $(\cM(\mC_n), (\cD_i(\mC_n))_{\delta n \le i \le 3V_n/2 })$, and  $d_{\mathrm{fpp}}^\delta$ is fully determined by the components $(\cM(\mC_n),  (\mD(i))_{\delta n \le i \le 3V_n/2 })$. 
Using Inequality~\eqref{eq:ineqcontigu} and~\eqref{eq:devil2} it follows that 
\begin{align}
	\Prb{\cA,   \sup_{u,v \in \cM(\mC_n) - \Omega_\delta } \left | c_{\mathrm{fpp}} d_{\cM(\mC_n)}(u,v) -  d_{\mC_n}^\delta(u,v)\right | >( 2 \epsilon + \epsilon c_{\mathrm{fpp}}) n^{1/4} } \to 0.
\end{align}
By~\eqref{eq:keynote1}, it follows that 
\begin{align}
	\label{eq:apo1}
	\Prb{\sup_{u,v \in \cM(\mC_n) - \Omega_\delta } \left | c_{\mathrm{fpp}} d_{\cM(\mC_n)}(u,v) -  d_{\mC_n}^\delta(u,v)\right | >( 2 \epsilon + \epsilon c_{\mathrm{fpp}}) n^{1/4} } \le 2\epsilon
\end{align}
for all sufficiently large $n$. If $d_{\mC_n}(o, \Omega_\delta) < \epsilon n^{1/4}$, then contracting $\Omega_\delta$ changes distances by at most $2\epsilon$. Hence, by Inequality~\eqref{eq:krypt} we know that
\begin{align}
	\label{eq:apo2}
	\Prb{ \sup_{u,v \in \cM(\mC_n) - \Omega_\delta } \left |  d_{\cM(\mC_n)}(u,v) -  d_{\mC_n}^\delta(u,v)\right | > 2 \epsilon n^{1/4}} \le 2 \epsilon_1.
\end{align}
Combining~\eqref{eq:apo1} and \eqref{eq:apo2} using the triangle inequality it follows that
\begin{align}
	\label{eq:apo3}
	\Prb{\sup_{u,v \in \cM(\mC_n) - \Omega_\delta } \left | c_{\mathrm{fpp}} d_{\cM(\mC_n)}(u,v) -  d_{\mC_n}(u,v)\right | >( 4\epsilon + \epsilon c_{\mathrm{fpp}}) n^{1/4} } \le 2\epsilon + 2\epsilon_1.
\end{align}
Note that $d_{\mC_n}(o, \Omega_\delta) < \epsilon n^{1/4}$ entails $d_{\cM(\mC_n)}(o, \Omega_\delta) < \epsilon n^{1/4}$, because we cannot decrease  distances by inserting components. Hence, using again Inequality~\eqref{eq:krypt}, we get
\begin{align}
	\label{eq:apo4}
	\Prb{\sup_{u,v \in \cM(\mC_n) } \left | c_{\mathrm{fpp}} d_{\cM(\mC_n)}(u,v) -  d_{\mC_n}(u,v)\right | >( 6\epsilon + 3\epsilon c_{\mathrm{fpp}}) n^{1/4} } \le 4\epsilon + 2\epsilon_1.
\end{align}
This completes the proof.
\end{proof}

Lemma~\ref{le:hauptlemma} and the scaling limit for $\cM(\mC_n)$ in Corollary~\ref{co:coreconvprel} immediately yield:

\begin{corollary}
	\label{co:comeonalmost}
		As $n \in 2\ndN$ tends to infinity,
		\[
		\left(\cM(\mC_n), \frac{3^{1/4}}{2 c^\dagger c_{\mathrm{fpp}} } (\kappa n)^{-1/4} d_{\mC_n}, \mu_{\cM(\mC_n)}\right) \convdis (\mathbf{M}, d_{\mathbf{M}}, \mu_{\mathbf{M}})
		\]
		in the Gromov--Hausdorff--Prokhorov sense.
\end{corollary}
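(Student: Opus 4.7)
The plan is to derive Corollary~\ref{co:comeonalmost} by a short packaging argument that combines the previously established Corollary~\ref{co:coreconvprel} with Lemma~\ref{le:hauptlemma}. The point is that both $d_{\cM(\mC_n)}$ (the intrinsic graph distance on the core) and the subspace distance $d_{\mC_n}$ restricted to $\cM(\mC_n)$ live on the same vertex set with the same uniform measure, so comparing them is a pure metric-distortion question.

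First I would set
\begin{align*}
	A_n &:= \left(\cM(\mC_n),\, \frac{3^{1/4}}{2 c^\dagger}(\kappa n)^{-1/4} d_{\cM(\mC_n)},\, \mu_{\cM(\mC_n)}\right), \\
	B_n &:= \left(\cM(\mC_n),\, \frac{3^{1/4}}{2 c^\dagger c_{\mathrm{fpp}}}(\kappa n)^{-1/4} d_{\mC_n},\, \mu_{\cM(\mC_n)}\right)
\end{align*}
and apply Proposition~\ref{pro:ghpcoupling} with the identity correspondence $R = \{(x,x) : x \in \cM(\mC_n)\}$ and the diagonal coupling $\nu$ of $\mu_{\cM(\mC_n)}$ with itself. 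Since $\nu(R) = 1$, the GHP distance between $A_n$ and $B_n$ is at most $\frac{1}{2}\mathrm{dis}(R)$, and
\[
	\mathrm{dis}(R) = \frac{3^{1/4}}{2 c^\dagger c_{\mathrm{fpp}}}(\kappa n)^{-1/4} \sup_{x,y \in \cM(\mC_n)} \left| c_{\mathrm{fpp}}\, d_{\cM(\mC_n)}(x,y) - d_{\mC_n}(x,y)\right|
\]
which is $o_p(1)$ by Lemma~\ref{le:hauptlemma}. Hence $d_{\mathrm{GHP}}(A_n, B_n) \convp 0$.

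Second, Corollary~\ref{co:coreconvprel} gives $A_n \convdis (\mathbf{M}, d_{\mathbf{M}}, \mu_{\mathbf{M}})$. Since $(\mathfrak{K}, d_{\mathrm{GHP}})$ is Polish, the standard converging-together lemma transfers this convergence to $B_n$, which is exactly the statement of the corollary. No genuine obstacle appears at this stage: once Lemma~\ref{le:hauptlemma} and Corollary~\ref{co:coreconvprel} are in hand, the argument is mechanical. The only things worth checking are that the supremum in Lemma~\ref{le:hauptlemma} is taken uniformly over all pairs in $\cM(\mC_n)$ (which it is), and that the rescaling by the fixed strictly positive constant $c_{\mathrm{fpp}}$ does not affect the asymptotic order of any quantity involved.
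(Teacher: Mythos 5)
Your argument is correct and is exactly the intended route: the paper deduces Corollary~\ref{co:comeonalmost} directly from Lemma~\ref{le:hauptlemma} and Corollary~\ref{co:coreconvprel} without further comment, and your identity-correspondence/diagonal-coupling packaging via Proposition~\ref{pro:ghpcoupling} is the standard way to spell out that deduction. The distortion computation and the converging-together step are both sound.
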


By Lemma~\ref{le:compdiam} we know that the Hausdorff distance between the random connected cubic planar graph $\mC_n$ and its $3$-connected core $\cM(\mC_n)$ tends in probability to zero after rescaling distances by $n^{-1/4}$. Using Corollary~\ref{co:comeonalmost} it follows that
\begin{align*}
	\left(\mC_n, \frac{3^{1/4}}{2 c^\dagger c_{\mathrm{fpp}} } (\kappa n)^{-1/4} d_{\mC_n}\right) \convdis (\mathbf{M}, d_{\mathbf{M}})
\end{align*}
in the Gromov--Hausdorff sense. However, in order to show convergence in the Gromov--Hausdorff--Prokhorov sense we need to bound the Prokhorov distance between the uniform measures on $\mC_n$ and $\cM(\mC_n)$ after rescaling distances. 

Since the network components $(\cD_i(\mC_n))_{\delta n \le i \le 3V_n/2 }$ are exchangeable conditional on $V_n$, we may argue analogous as in the proof of a similar result~\cite[Cor. 6.2]{MR3729639} for random quadrangulations. In order to provide details, we recall a general lemma given in~\cite{MR3729639}:

\begin{proposition}[{\cite[Lem. 5.3]{MR3729639}}]
	\label{eq:prono}
	Let $G$ be a connected planar graph with an ordering $e_1, \ldots, e_m$ of its edges.  Let $\bm{n} = (n_1, \ldots, n_m)$ be an exchangeable random vector of non-negative real numbers.  For each $1 \le i \le m$ let $w_i$ denote a uniformly and independently selected endpoint of the edge $e_i$. Suppose that $\norm{\bm{n}}_1>0$ and define the random probability measure $\nu_G^{\bm{n}}$ on the vertex set of $G$ with 
	\[
		\nu_G^{\bm{n}}(U) = \norm{\bm{n}}^{-1}_1 \sum_{i \,:\, w_i \in U } n_i
	\]
	for any subset $U$ of the vertex set of $G$. Here $\norm{ \cdot }_p$ denotes the $p$-norm for any $p >0$.
	 Then for any $t>0$ 
	\begin{multline*}
	\Prb{ \left| (2m)^{-1} \sum_{u \in U} d_G(u) - \nu_G^{\bm{n}}(U) \right| > \frac{2t}{\norm{\bm{n}}_1} + \frac{1}{m}  \,\, \Bigg \vert \,\,  \norm{\bm{n}}_2	 } 
	\le 4 \exp \left(- \frac{2t^2}{\norm{\bm{n}}_2^2} \right).
	\end{multline*}
\end{proposition}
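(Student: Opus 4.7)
The plan is to split the deviation into two Hoeffding-type contributions: one from the randomness of the endpoints $(w_i)$ given the weights $\bm n$, and one from the randomness of the weights $\bm n$ given only their $\ell^2$-norm (using exchangeability). Set $X_i := \one_{w_i \in U}$ and $p_i := \Prb{w_i \in U} = |e_i \cap U|/2 \in \{0, 1/2, 1\}$. Since the $X_i$ are independent of $\bm n$ and of each other, and since the identity $\sum_{u \in U} d_G(u) = 2\sum_i p_i$ holds, we would decompose
\[
\nu_G^{\bm n}(U) - \frac{1}{2m}\sum_{u \in U} d_G(u) = A + B,
\]
where, writing $\bar n = \norm{\bm n}_1/m$,
\[
A = \frac{1}{\norm{\bm n}_1}\sum_{i=1}^m n_i(X_i - p_i), \qquad B = \frac{1}{\norm{\bm n}_1}\sum_{i=1}^m (n_i - \bar n)\,p_i.
\]

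For the first term, conditional on $\bm n$ the $X_i$ are independent, and $n_i(X_i - p_i)/\norm{\bm n}_1$ is centered and bounded in magnitude by $n_i/\norm{\bm n}_1$. The classical Hoeffding--Azuma inequality yields
\[
\Prb{|A| > t/\norm{\bm n}_1 \,\Big\vert\, \bm n} \;\le\; 2\exp\!\left(-\frac{2t^2}{\norm{\bm n}_2^2}\right),
\]
and this bound passes to the conditional law given $\norm{\bm n}_2$ by Fubini.

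For the second term, exchangeability of $\bm n$ is crucial. Conditional on the multiset $\{n_1, \ldots, n_m\}$, the ordered tuple $\bm n$ is a uniformly random arrangement, so $B$ has the same conditional law as $\norm{\bm n}_1^{-1}\sum_i (v_{\sigma(i)}-\bar v) p_i$ for a uniform random permutation $\sigma$ of the multiset values $(v_i)$. A Hoeffding-type inequality for permutation sums, via Hoeffding's 1963 reduction theorem (which dominates the moment generating function of the permutation sum by that of its i.i.d.\ with-replacement analogue), then yields
\[
\Prb{|B| > t/\norm{\bm n}_1 + 1/m \,\Big\vert\, \norm{\bm n}_2} \;\le\; 2\exp\!\left(-\frac{2t^2}{\norm{\bm n}_2^2}\right),
\]
where the additive slack $1/m$ absorbs the discretization $p_i \in \{0,1/2,1\}$ and the hypergeometric correction when passing between sampling-without-replacement and i.i.d.\ sampling. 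A union bound over the two events then produces the factor of $4$ and the claimed inequality.

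The main obstacle will be obtaining the sub-Gaussian rate $\exp(-2t^2/\norm{\bm n}_2^2)$ for $B$: a naive Serfling-type bound for sampling without replacement produces a variance proxy involving the range $\max_i n_i - \min_i n_i$, which is far too crude for the heavy-tailed weight distributions arising in the intended application to network components. Hoeffding's reduction (or equivalently an exchangeable-pairs martingale argument \`a la Chatterjee) remedies this by passing to the $\ell^2$-geometry of $\bm n$ while retaining the exponential tail, and verifying this step carefully is where the bulk of the work lies.
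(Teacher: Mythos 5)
Your decomposition into the endpoint-randomness term $A$ and the weight-randomness term $B$ is valid, the identity $\sum_{u\in U}d_G(u)=2\sum_ip_i$ is correct, and the bound for $A$ by conditional Hoeffding is fine. The gap is in the justification of the $B$-bound. Hoeffding's 1963 convex-ordering theorem compares a without-replacement sum $\sum_{j\le k}Y_j$ to the i.i.d.\ with-replacement sum $\sum_{j\le k}Y_j'$ with each $Y_j'$ uniform on the population; applying Hoeffding's lemma to each $Y_j'$ then produces the variance proxy $k(\max_i n_i-\min_i n_i)^2/4$ --- precisely the range-based Serfling-type quantity you already flag as too crude. The reduction does not ``pass to the $\ell^2$-geometry''; it inherits the population range, and the $1/m$ additive slack does not absorb the resulting factor-$m$ loss (e.g.\ $\bm n$ a permutation of $(m,0,\dots,0)$). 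Likewise, Chatterjee's exchangeable-pairs inequality for $\sum_ia_{i\sigma(i)}$ is Bernstein-type in $\max_{i,j}a_{ij}$, not sub-Gaussian in the $\ell^2$-norm. So the step you correctly single out as ``where the bulk of the work lies'' is not delivered by either tool you cite.

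The $\ell^2$ variance proxy comes from a \emph{different} reduction to independence. Conditioning on the multiset $\{v_1,\dots,v_m\}$ of weight values (so $n_i=v_{\sigma(i)}$ with $\sigma$ a uniform permutation), one has $\norm{\bm n}_1\,\nu_G^{\bm n}(U)=\sum_{j=1}^m v_j\,\one\{j\in T\}$, where $T=\sigma(\{i:w_i\in U\})$ is, conditionally on $K:=\sum_i\one\{w_i\in U\}$, a uniform $K$-subset of $\{1,\dots,m\}$. The inclusion indicators $(\one\{j\in T\})_{1\le j\le m}$ of a uniform random subset are \emph{negatively associated}, so
\[
\Exb{\exp\Big(\lambda\sum_j v_j\one\{j\in T\}\Big)\ \Big|\ K}\ \le\ \prod_{j=1}^m\Exb{\exp(\lambda v_jB_j)},\qquad B_j\ \text{i.i.d.\ }\mathrm{Bernoulli}(K/m),
\]
and Hoeffding's lemma on each factor has range $v_j$ (not the population range), giving $\exp(\lambda K\bar v+\lambda^2\norm{v}_2^2/8)$ with $\bar v=\norm{v}_1/m$. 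Combining with the independent-Bernoulli MGF bound for $K$ and the Cauchy--Schwarz estimate $m\bar v^2\le\norm{v}_2^2$ yields $\Exb{\exp(\lambda(\norm{\bm n}_1\nu_G^{\bm n}(U)-\bar v\sum_ip_i))}\le\exp(\lambda^2\norm{v}_2^2/4)$, hence a two-sided tail $2\exp(-s^2/\norm{\bm n}_2^2)$, which implies the stated inequality with a factor $2$ rather than $4$ and without using the $1/m$ slack. If you prefer to keep your $A+B$ split, the same negative-association MGF bound applies to the two WOR sums obtained from $p_i=\tfrac12\one\{p_i\ge1/2\}+\tfrac12\one\{p_i=1\}$, stitched together with Cauchy--Schwarz; but in either form, this negative-association step --- not Hoeffding's reduction --- is the ingredient your outline is missing.
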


We are specifically interested in the case where $G = \cM(\mC_n)$ and where the exchangeable random vector represents the number of vertices in the attached components. From here on, in order to simplify notation, we set
\begin{align}
	B_{\epsilon}(u) = \left\{ v \in \cM(\mC_n) \,\, \Bigg \vert \,\, \frac{3^{1/4}}{2 c^\dagger c_{\mathrm{fpp}} } (\kappa n)^{-1/4} d_{\mC_n}(u,v) < \epsilon \right \}
\end{align}
for all $\epsilon>0$ and all vertices $u \in \cM(\mC_n)$.

\begin{corollary}
	\label{co:noffff}
	For each $1 \le i \le 3 V_n / 2$, let $n_i = |\cD_i(\mC_n)|$ denote the number of vertices in the $i$th network component of $\mC_n$, and set $\bm{n} = (n_i)_{1 \le i \le 3 V_n / 2}$. Let $u$ and $v$ denote uniform random independent vertices of $\cM(\mC_n)$. Then for each $\epsilon>0$
	\begin{align*}
		\left| \nu_{\cM(\mC_n)}^{\bm{n}}\left(B_{\epsilon}(u)\right) - \mu_{\cM(\mC_n)}\left(B_{\epsilon}(u)\right)  \right| \convp 0, \\
		\left| \nu_{\cM(\mC_n)}^{\bm{n}}\left(B_{\epsilon}(u) \cap B_{\epsilon}(v) \right) - \mu_{\cM(\mC_n)}\left(B_{\epsilon}(u) \cap B_{\epsilon}(v) \right)   \right| \convp 0,
	\end{align*}
as $n \in 2\ndN$ tends to infinity.
\end{corollary}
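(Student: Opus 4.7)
The plan is to apply Proposition~\ref{eq:prono} to $G = \cM(\mC_n)$ with the weight vector $\bm{n} = (n_i)_{1 \le i \le 3V_n/2}$. Conditionally on $(\cM(\mC_n), V_n)$ the network components $(\cD_i(\mC_n))_{1 \le i \le 3V_n/2}$ are exchangeable by the uniform distribution of $\mC_n$, hence so is $\bm{n}$. Since $\cM(\mC_n)$ is cubic with $V_n$ vertices and $m = 3V_n/2$ edges, every vertex has degree~$3$ and
\[
(2m)^{-1} \sum_{u \in U} d_{\cM(\mC_n)}(u) = \frac{3|U|}{3V_n} = \mu_{\cM(\mC_n)}(U)
\]
for any $U \subset V(\cM(\mC_n))$. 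Proposition~\ref{eq:prono} thus directly controls $|\nu^{\bm{n}}_{\cM(\mC_n)}(U) - \mu_{\cM(\mC_n)}(U)|$.

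To bound the norms of $\bm{n}$, first note $\norm{\bm{n}}_1 = n - V_n$, so Lemma~\ref{le:main2} gives $\norm{\bm{n}}_1 \ge \frac{1-\kappa}{2} n$ with high probability. Lemma~\ref{le:compsizes} yields $\norm{\bm{n}}_\infty = O_p(n^{2/3})$, and combining these bounds gives $\norm{\bm{n}}_2^2 \le \norm{\bm{n}}_\infty \norm{\bm{n}}_1 \le n^{5/3+\eta}$ with high probability, for any $\eta > 0$.

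Next, realize the auxiliary endpoint choices $w_1, \ldots, w_m$ used in the construction of $\nu^{\bm{n}}_{\cM(\mC_n)}$ independently of $\mC_n$ and of $(u,v)$. Conditional on $\cG := \sigma(\mC_n, u, v)$, the vector $\bm{n}$ is deterministic and the sets $U \in \{B_\epsilon(u),\ B_\epsilon(u) \cap B_\epsilon(v)\}$ are fixed subsets of $V(\cM(\mC_n))$, so Proposition~\ref{eq:prono} applies conditionally on $\cG$. Choosing $t = n^{5/6+\eta}$ for a small $\eta > 0$, on the high-probability event above the deterministic bound $\frac{2t}{\norm{\bm{n}}_1} + \frac{1}{m} = O(n^{-1/6+\eta})$ tends to zero, while the conditional deviation probability is at most $4\exp(-2t^2/\norm{\bm{n}}_2^2) \le 4\exp(-2 n^{\eta})$. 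Taking expectations and letting $n \to \infty$ along $2\ndN$ proves that $|\mu_{\cM(\mC_n)}(U) - \nu^{\bm{n}}_{\cM(\mC_n)}(U)| \convp 0$ for each of the two choices of $U$.

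The main subtle point is the conditioning: the balls $B_\epsilon(u)$ and $B_\epsilon(v)$ depend on $d_{\mC_n}$ and hence on all attached components $(\cD_i(\mC_n))_i$ (which also determine $\bm{n}$), so the auxiliary endpoint choices $w_i$ must be realized independently of the entire graph $\mC_n$ in order for Proposition~\ref{eq:prono} to apply after conditioning on $\cG$.
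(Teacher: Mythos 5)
The central step in your argument --- conditioning on $\cG = \sigma(\mC_n, u, v)$ and then invoking Proposition~\ref{eq:prono} --- does not go through. Conditionally on $\cG$, the vector $\bm{n}$ is a \emph{deterministic} vector with (generically) distinct entries, and such a vector is not exchangeable; exchangeability of $\bm{n}$ is a required hypothesis of Proposition~\ref{eq:prono}, and the proposition's proof genuinely uses it. Concretely, if $\bm{n}$ is frozen, the only remaining randomness in $\nu^{\bm{n}}_{\cM(\mC_n)}(U)$ is the auxiliary endpoints $(w_i)_i$, and
\[
\Exb{\nu^{\bm{n}}_{\cM(\mC_n)}(U) \mid \cG} = \frac{1}{2\norm{\bm{n}}_1}\sum_{i} n_i\, |e_i \cap U|,
\]
which has no reason to be close to $\mu_{\cM(\mC_n)}(U)$ for a particular realization of the components --- especially since the realization of the $(\cD_i)_i$ both determines $\bm{n}$ and shapes the set $U = B_\epsilon(u)$ via $d_{\mC_n}$, so large components could well be steered into $U$.

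The paper applies Proposition~\ref{eq:prono} while keeping $\bm{n}$ random and exchangeable (conditioning only on $\norm{\bm{n}}_2$, or integrating it out), and makes the simpler choice $t = n^{1/12}\norm{\bm{n}}_2$. You are right to flag that there is a genuine subtlety here --- $B_\epsilon(u)$ is defined through $d_{\mC_n}$ and hence is correlated with $\bm{n}$ --- but the way to resolve it is not to condition more finely on $\bm{n}$. If one wishes to be fully careful, one should sandwich $B_\epsilon(u)$ between balls $B^{\mathrm{core}}_{\epsilon\mp\delta}(u)$ taken in the rescaled core metric $d_{\cM(\mC_n)}$, which depend only on $\cM(\mC_n)$ and $u$ (so the proposition applies to them conditionally on $(\cM(\mC_n), V_n, u, v, \norm{\bm{n}}_2)$ with exchangeability intact), and then control the annulus $B^{\mathrm{core}}_{\epsilon+\delta}(u)\setminus B^{\mathrm{core}}_{\epsilon-\delta}(u)$ via Lemma~\ref{le:hauptlemma}, Corollary~\ref{co:coreconvprel} and the diffuseness of the Brownian map. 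As written, your proof has a gap at the step ``Proposition~\ref{eq:prono} applies conditionally on $\cG$''.
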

\begin{proof}
	Using Lemma~\ref{le:main2}, it follows that
	\begin{align}
		\label{eq:noni1}
		\norm{\bm{n}}_1 = n - V_n \convp (1- \kappa) n.
	\end{align}
	By Lemma~\ref{le:compsizes} we know
	\[
		\max_{1\le i \le 3V_n/2} |\cD_i(\mC_n)| = O_p(n^{2/3})
	\]
	and consequently
	\begin{align}
		\label{eq:noni2}
		\norm{\bm{n}}_2 = O_p(n^{5/6}).
	\end{align}
	Note that for cubic graphs, the uniform measure is identical to the stationary distribution, which describes a random vertex sampled with probability proportional to its degree.  Let $U$ either denote the subset $B_{\epsilon}(u)$ or the subset $B_{\epsilon}(u) \cap B_{\epsilon}(v)$. Using Proposition~\ref{eq:prono} for $t = n^{1/12}\norm{\bm{n}}_2$ it follows that 
	\begin{align*}
		\Prb{ \left| \mu_{\cM(\mC_n)}(U) - \nu_{\cM(\mC_n)}^{\bm{n}}(U) \right| > 2 n^{1/12}\frac{\norm{\bm{n}}_2}{\norm{\bm{n}}_1} + \frac{2}{3V_n}  } 
		\le 4 \exp \left(- 2 n^{1/6} \right).
	\end{align*}
	By Lemma~\ref{le:main2} and Equations~\eqref{eq:noni1} and~\eqref{eq:noni2} it follows that
	\[
	2 n^{1/12}\frac{\norm{\bm{n}}_2}{\norm{\bm{n}}_1} + \frac{2}{3V_n} = O_p(n^{-1/12}).
	\]
	This immediately yields the claimed limits.
\end{proof}

Compare also with~\cite[Cor. 6.1]{MR3729639}.  We prove the following Prokhorov-approximation argument  by analogous arguments as for \cite[Cor. 6.2]{MR3729639}.

\begin{proposition}
	\label{pro:louyut}
	Set $\bm{n} = (n_i)_{1 \le i \le 3 V_n / 2}$ with $n_i = |\cD_i(\mC_n)|$  for all $1 \le i \le 3 V_n / 2$.	Then
	\[
	d_{\mathrm{P}}(\nu_{\cM(\mC_n)}^{\bm{n}}, \mu_{\cM(\mC_n)}) \convp 0
	\]
	as measures on the rescaled graph
	\[
		\left(\cM(\mC_n), \frac{3^{1/4}}{2 c^\dagger c_{\mathrm{fpp}} } (\kappa n)^{-1/4} d_{\mC_n}\right).
	\]
\end{proposition}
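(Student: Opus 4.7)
The plan is to reduce the Prokhorov comparison to a uniform mass approximation over a finite partition of the rescaled core into small-diameter cells, and then apply the concentration estimate of Proposition~\ref{eq:prono} conditionally on $\mC_n$ to each cell.

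Fix $\epsilon, \epsilon' > 0$. Combining the scaling limit in Corollary~\ref{co:comeonalmost} with Lemma~\ref{co:fullrandom} (using that the Brownian map almost surely has full support), I choose $\delta > 0$ such that with probability at least $1 - \epsilon'$, every closed ball of radius $\epsilon/4$ in the rescaled core has $\mu_{\cM(\mC_n)}$-mass at least $\delta$. Take a maximal $(\epsilon/2)$-separated subset $\{u_1, \ldots, u_K\}$ of $\cM(\mC_n)$ under the rescaled $d_{\mC_n}$-metric, and let $\{P_1, \ldots, P_K\}$ be the corresponding Voronoi partition with ties broken measurably. Then $P_i \subset B_{\epsilon/2}(u_i)$, so each $P_i$ has diameter at most $\epsilon$; and on the above event the pairwise disjointness of the balls $B_{\epsilon/4}(u_i)$ forces $K \leq 1/\delta$. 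The diameter bound yields, for any Borel $A$, the inclusion $\bigcup_{i : P_i \cap A \neq \emptyset} P_i \subset A^\epsilon$, which gives $d_{\mathrm{P}}(\nu_{\cM(\mC_n)}^{\bm{n}}, \mu_{\cM(\mC_n)}) \leq \epsilon + \sum_{i=1}^K |\nu_{\cM(\mC_n)}^{\bm{n}}(P_i) - \mu_{\cM(\mC_n)}(P_i)|$. Thus it suffices to establish $\sum_{i=1}^K |\nu_{\cM(\mC_n)}^{\bm{n}}(P_i) - \mu_{\cM(\mC_n)}(P_i)| \convp 0$.

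Conditioning on $\mC_n$ makes $\cM(\mC_n)$, the full metric $d_{\mC_n}$, the size vector $\bm{n}$, and the partition $\{P_i\}$ all deterministic, while the uniform endpoint labels $(w_i)$ appearing in the definition of $\nu_{\cM(\mC_n)}^{\bm{n}}$ remain independent. Proposition~\ref{eq:prono} applied to each $U = P_i$ with the choice $t = n^{5/6 + \eta}$ for a small $\eta > 0$ gives, on the high-probability event $\|\bm{n}\|_1 \geq (1 - \kappa)n/2$ and $\|\bm{n}\|_2 \leq n^{5/6 + \eta/2}$ (which follows from Lemma~\ref{le:main2} together with Lemma~\ref{le:compsizes}), a conditional probability bound of order $4 \exp(-2 n^\eta)$ on the event $|\nu_{\cM(\mC_n)}^{\bm{n}}(P_i) - \mu_{\cM(\mC_n)}(P_i)| > n^{-1/6 + 2\eta}$. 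Since $K$ is bounded by $1/\delta$ with high probability, a union bound over the $K$ cells yields $\sum_{i=1}^K |\nu_{\cM(\mC_n)}^{\bm{n}}(P_i) - \mu_{\cM(\mC_n)}(P_i)| = O_p(n^{-1/6 + 2\eta})$, which converges to zero in probability.

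The main obstacle is the interplay between two layers of randomness: the partition $\{P_i\}$ depends on $\mC_n$ through the metric $d_{\mC_n}$ restricted to the core, so the cells are not fixed subsets of $\cM(\mC_n)$ to which one could apply a concentration inequality directly. The resolution is to condition on $\mC_n$, which makes both the partition and the component sizes $\bm{n}$ deterministic, with the uniform endpoint labels $(w_i)$ retaining just enough independence to trigger Proposition~\ref{eq:prono}. This mirrors the analogous transfer argument in the proof of~\cite[Cor. 6.2]{MR3729639} for random quadrangulations.
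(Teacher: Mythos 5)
Your Voronoi-partition route is genuinely different from the paper's argument, but the conditioning step has a gap. You condition on the entire graph $\mC_n$, which freezes the size vector $\bm{n}$, and then apply Proposition~\ref{eq:prono} using only the residual randomness in the endpoint labels $(w_i)$. However, the hypothesis of Proposition~\ref{eq:prono} is precisely that $\bm{n}$ be an \emph{exchangeable random} vector, and this is what conditioning on $\mC_n$ destroys. For a fixed $\bm{n}$, the conditional mean of $\nu_{\cM(\mC_n)}^{\bm{n}}(U)$ over the $(w_i)$'s alone is $\norm{\bm{n}}_1^{-1}\sum_i n_i\,|e_i\cap U|/2$, which in general differs from $\mu_{\cM(\mC_n)}(U)$ --- the matching of the means built into Proposition~\ref{eq:prono} is bought by averaging over the exchangeable rearrangements of $\bm{n}$, not by the Bernoulli choices of the $w_i$'s. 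So the exponential concentration around $\mu_{\cM(\mC_n)}(P_i)$ that you invoke cannot be obtained conditionally on $\mC_n$.

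The paper never conditions on $\mC_n$. It first proves Corollary~\ref{co:noffff}: the unconditional statements that $|\nu_{\cM(\mC_n)}^{\bm{n}}(B_\epsilon(u))-\mu_{\cM(\mC_n)}(B_\epsilon(u))|\convp 0$, and similarly for $B_\epsilon(u)\cap B_\epsilon(v)$, where $u,v$ are independent uniform vertices of the core. It then uses compactness and full support of the Brownian map to show that a deterministic number $K$ of such balls covers the rescaled core with high probability, and bounds the Prokhorov distance by disjointifying that covering and expanding each cell in terms of at most $K$ of the balls and their pairwise intersections. Your bound $K\le 1/\delta$ on the number of Voronoi cells plays the same role as the paper's $K$. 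A plausible way to salvage your route is to build the cells using the core metric $d_{\cM(\mC_n)}$ (which is measurable with respect to $\cM(\mC_n)$ alone and leaves $\bm{n}$ exchangeable) and then invoke Lemma~\ref{le:hauptlemma} to transfer the resulting Prokhorov estimate to the $d_{\mC_n}$-metric; as written, however, the argument does not go through.
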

\begin{proof}
	  Let $(U_i)_{i \ge 1}$ be independent points of $\mathbf{M}$ with law $\mu_{\mathbf{M}}$. We stress that these points are generated in a quenched way. That is, we first generate $(\mathbf{M}, d_{\mathbf{M}}, \mu_{\mathbf{M}})$ and then sample independent points with respect to the same instance of the randomly generated measure $\mu_{\mathbf{M}}$.

	   Let $\epsilon>0$ be given. Since $\mathbf{M}$ is compact, it has a finite cover by open $(\epsilon/2)$-balls $C_1, \ldots, C_\ell$, $\ell \ge 1$. Since $\mu_{\mathbf{M}}$ has full support, it holds that
	  $\min_{1 \le i \le k} \mu_{\bm{M}}(C_i) > 0$. Hence, conditional on $(\mathbf{M}, d_{\mathbf{M}}, \mu_{\mathbf{M}})$, with probability one there exists an index $k$ such that for  all $1 \le i \le \ell$
	  \[
	  	\{U_1, \ldots, U_{k}\} \cap C_i \ne \emptyset.
	  \]
	  This entails that  $B_{\epsilon}^{\mathbf{M}}(U_1), \ldots, B_{\epsilon}^{\mathbf{M}}(U_{k})$ is  a finite open cover of $\mathbf{M}$. Hence, almost surely
	  \[
	  	K_\infty := \inf \left\{ k \ge 1 \,\, \Big\vert\,\, \bigcup_{i=1}^k B_\epsilon^{\mathbf{M}}(U_i) = \mathbf{M} \right\} < \infty.
	  \]
	  Let $(u_i)_{i \ge 1}$ denote independent uniform random vertices of $\cM(\mC_n)$. Since $\cM(\mC_n)$ has finitely many vertices, almost surely
	  \[
	  	K_n := \inf \left\{ k \ge 1 \,\, \Big\vert\,\, \bigcup_{i=1}^k B_\epsilon(u_i) = \cM(\mC_n) \right\} < \infty.
	  \]
	  By Corollary~\ref{co:comeonalmost}, it follows that there exists a constant $K \in \ndN$ with 
	  \begin{align}
	  	\label{eq:knkepsilon}
	  	\Pr{K_n > K} < \epsilon
	  \end{align}
	  for all large enough $n \in 2\ndN$. We set
	  \[
	  	A_1 = B_\epsilon(u_1)
	  \]
	  and for all $k \ge 2$
	  \[
	   A_k = B_\epsilon(u_k) \setminus \bigcup_{i=1}^{k-1} B_\epsilon(u_i),
	  \]
	  so that $A_1, \ldots, A_{K_n}$ is a cover of $\cM(\mC_n)$ by disjoint  subsets.
	  
	  Suppose that
	  \[
	  	d_{\mathrm{P}}(\nu_{\cM(\mC_n)}^{\bm{n}}, \mu_{\cM(\mC_n)}) > \epsilon.
	  \]
	  Then there exists a subset $S \subset \cM(\mC_n)$ such that
	  \[
	  	S^{\epsilon} := \{v \in \cM(\mC_n) \mid d_{\mC_n}(v, S) < \epsilon\}
	  \]
	  satisfies
	  \[
	  		\mu_{\cM(\mC_n)}(S^{\epsilon}) < \nu_{\cM(\mC_n)}^{\bm{n}}(S) - \epsilon.
	  \]
	  Since $A_1, \ldots, A_{K_n}$ is a disjoint cover of $\cM(\mC_n)$, it follows that
	  \[
	  	\sum_{j=1}^{K_n} \mu_{\cM(\mC_n)}(S^{\epsilon} \cap A_j) < \sum_{j=1}^{K_n} (\nu_{\cM(\mC_n)}^{\bm{n}}(S) - \epsilon/K_n).
	  \]
	  Hence there exists $1 \le j \le K_n$ with
	  \begin{align*}
	  		\mu_{\cM(\mC_n)}(S^{\epsilon} \cap A_j) < \nu_{\cM(\mC_n)}^{\bm{n}}(S \cap A_j) - \epsilon / K_n
	  \end{align*}
	  This implies $A_j \cap S \ne \emptyset$. Since $A_j$ is contained in an open $\epsilon$-ball, it follows that 
	  \[
	  	A_j \subset S^\epsilon.
	  \]
	  It follows that
	  \[
	  \mu_{\cM(\mC_n)}(A_j) < \nu_{\cM(\mC_n)}^{\bm{n}}(A_j) - \epsilon / K_n.
	  \]
	  Consequently, using~\eqref{eq:knkepsilon} it follows that
	  \begin{align*}
	  	  &\Prb{ d_{\mathrm{P}}(\nu_{\cM(\mC_n)}^{\bm{n}}, \mu_{\cM(\mC_n)}) > \epsilon  } \\
	  	  &\le \Prb{  |\mu_{\cM(\mC_n)}(A_j) - \nu_{\cM(\mC_n)}^{\bm{n}}(A_j)| > \epsilon / K_n \text{ for some } 1 \le j \le K_n } \\
	  	  &\le \sum_{j=1}^{K} \Prb{ |\mu_{\cM(\mC_n)}(A_j) - \nu_{\cM(\mC_n)}^{\bm{n}}(A_j)| > \epsilon / K_n, K_n \le K} + \Pr{K_n >K} \\
	  	  &\le \sum_{j=1}^{K} \Prb{ |\mu_{\cM(\mC_n)}(A_j) - \nu_{\cM(\mC_n)}^{\bm{n}}(A_j)| >  \epsilon / K} + \epsilon.
	  \end{align*}
	Moreover, for all $1 \le k \le K$
	\begin{align*}
		&|\mu_{\cM(\mC_n)}(A_k) - \nu_{\cM(\mC_n)}^{\bm{n}}(A_k)| \\
		&= \left| \mu_{\cM(\mC_n)}\left(B_\epsilon(u_k) \setminus \bigcup_{i=1}^{k-1} B_\epsilon(u_i)\right) - \nu_{\cM(\mC_n)}^{\bm{n}}\left(B_\epsilon(u_k) \setminus \bigcup_{i=1}^{k-1} B_\epsilon(u_i)\right)\right| \\
		&\le |\mu_{\cM(\mC_n)}(B_\epsilon(u_k)) - \nu_{\cM(\mC_n)}^{\bm{n}}(B_\epsilon(u_k))| \\
		 &\quad\,+ \sum_{i=1}^{K}|\mu_{\cM(\mC_n)}(B_\epsilon(u_k)\cap B_\epsilon(u_i)) - \nu_{\cM(\mC_n)}^{\bm{n}}(B_\epsilon(u_k) \cap B_\epsilon(u_i))|.
	\end{align*}
	By Corollary~\ref{co:noffff}, it follows that
	\[
		|\mu_{\cM(\mC_n)}(A_k) - \nu_{\cM(\mC_n)}^{\bm{n}}(A_k)| \convp 0.
	\]
	It follows that
	\[
		\limsup_{\substack{ n \to \infty \\ n \in 2\ndN}} \Prb{ d_{\mathrm{P}}(\nu_{\cM(\mC_n)}^{\bm{n}}, \mu_{\cM(\mC_n)}) > \epsilon  }  \le \epsilon.
	\]
	This holds for arbitrary $\epsilon>0$, hence the proof is complete.
\end{proof}

We are now ready to prove our main theorem.

\begin{proof}[Proof of Theorem~\ref{te:main}]
		Set $\bm{n} = (n_i)_{1 \le i \le 3 V_n / 2}$ with $n_i = |\cD_i(\mC_n)|$  for all $1 \le i \le 3 V_n / 2$. 
		
		Let $u$ be a uniformly at random selected vertex of $\mC_n$. If $u$ belongs to the $3$-connected core $\cM(\mC_n)$, then we set $v= u$. If $u$ does not, then it belongs to a unique network component, and we set $v$ to a uniformly selected endpoint of the edge of $\cM(\mC_n)$ where this component is inserted. Hence,  the law of $u$ is $\mu_{\mC_n}$, and the law $\mu_{\mathrm{mix}}$ of $v$ is a mixture of  $\mu_{\cM(\mC_n)}$ and~$\nu_{\cM(\mC_n)}^{\bm{n}}$.
		
		By Corollary~\ref{co:comeonalmost} and Proposition~\ref{pro:louyut}
it follows that
	\begin{align}
	\left(\cM(\mC_n), \frac{3^{1/4}}{2 c^\dagger c_{\mathrm{fpp}} } (\kappa n)^{-1/4} d_{\mC_n}, \mu_{\mathrm{mix}}\right) \convdis (\mathbf{M}, d_{\mathbf{M}}, \mu_{\mathbf{M}})
	\end{align}
	in the Gromov--Hausdorff--Prokhorov sense. 
	
	Let $R$ be the correspondence between $\left(\cM(\mC_n), \frac{3^{1/4}}{2 c^\dagger c_{\mathrm{fpp}} } (\kappa n)^{-1/4} d_{\mC_n}\right)$ and $\left(\mC_n, \frac{3^{1/4}}{2 c^\dagger c_{\mathrm{fpp}} } (\kappa n)^{-1/4} d_{\mC_n}\right)$ where for each edge of $\cM(\mC_n)$ its endpoints correspond  to all vertices of the corresponding network component (and to themselves) and vice versa. By 
	Lemma~\ref{le:compdiam} it follows that
	\begin{align}
		\mathrm{dis}(R) \convp 0.
	\end{align}
	The random points $u$ and $v$ correspond to each other with respect to $R$. By Proposition~\ref{pro:ghpcoupling}, it follows that
	\begin{align}
	\left(\mC_n, \gamma n^{-1/4} d_{\mC_n}, \mu_{\mC_n} \right) \convdis (\mathbf{M}, d_{\mathbf{M}}, \mu_{\mathbf{M}})
\end{align}
with
\begin{align}
	\gamma := \frac{3^{1/4}}{2 c^\dagger c_{\mathrm{fpp}} } \kappa ^{-1/4}.
\end{align}
\end{proof}

\label{sec:app}

\bibliographystyle{abbrv}
\bibliography{bmcubic}

\end{document}